\titleformat{\subsection}[runin]
       {\normalfont\bfseries}
       {\thesubsection}
       {0.5em}
       {}
       [.]
\title{Connexive implications in Substructural Logics}
\numberwithin{equation}{section}
\theoremstyle{plain}
\newtheorem{theorem}{Theorem}[section]
\newtheorem{lemma}[theorem]{Lemma}
\newtheorem{proposition}[theorem]{Proposition}
\newtheorem{corollary}[theorem]{Corollary}
\theoremstyle{definition}
\newtheorem{definition}[theorem]{Definition}
\newtheorem{remark}[theorem]{Remark}
\newtheorem{example}[theorem]{Example}
\crefname{proposition}{Prop.}{Props.}
\crefname{lemma}{Lem.}{Lems.}
\crefname{theorem}{Thm.}{Thms.}
\crefname{figure}{Fig.}{Figs.}
\crefname{corollary}{Cor.}{Cors.}
\crefname{example}{Ex.}{Exs.}
\crefname{remark}{Rm.}{Rms.}
\Crefname{proposition}{Proposition}{Propositions}
\Crefname{lemma}{Lemma}{Lemmas}
\Crefname{theorem}{Theorem}{Theorems}
\Crefname{corollary}{Corollary}{Corollaries}
\Crefname{figure}{Figure}{Figures}
\Crefname{example}{Example}{Examples}
\Crefname{remark}{Remark}{Remarks}
\newcommand{\m}{\mathbf}
\newcommand{\eq}{\approx}
\newcommand{\FLe}{{\mathsf{FL}_\mathsf{e}}}
\newcommand{\FLei}{\mathsf{FL}_\mathsf{ei}}
\newcommand{\FLew}{\mathsf{FL}_\mathsf{ew}}
\newcommand{\CRL}{\mathsf{CRL}}
\newcommand{\V}{\mathsf{V}}
\newcommand{\meet}{\wedge}
\newcommand{\join}{\vee}
\newcommand{\under}{{\to}}
\newcommand{\cnx}{{\Rightarrow}}
\newcommand{\rneg}{{\sim}}
\newcommand{\dm}{{\diamond}}
\newcommand{\bic}{\leftrightarrow}
\newcommand{\pos}{\delta}
\newcommand{\ppos}[1]{\pos{#1}}
\newcommand{\NS}{(\mathrm{NS})}
\newcommand{\BA}{\mathsf{BA}}
\newcommand\W{\mathsf{W}}
\newcommand\U{\mathsf{U}}
\newcommand{\IPL}{\mathbf{IPL}}
\newcommand{\CPL}{\mathbf{CPL}}
\newcommand{\K}{\mathsf{K}}
\newcommand{\Lng}{\mathcal{L}}
\newcommand{\cnxB}[1]{\cnx_{\hspace{-.2em}#1}}
\newcommand{\cnxBT}[2]{\cnxB{#1}^{\hspace{-.235em}#2}}
\newcommand{\cnxp}{\cnxB{\circ}}
\newcommand{\cnxm}{\cnxB{\meet}}
\newcommand{\cnxmt}{\cnxBT{\meet}{\pos}}
\newcommand{\cnxpt}{\cnxBT{\circ}{\pos}}
\newcommand{\cnxmd}{\cnxBT{{\meet}}{\dm}}
\newcommand{\cnxpd}{\cnxBT{\circ}{\dm}}
\newcommand{\spc}{strongly pseudo-complemented}
\newcommand{\expansive}{increasing}
\newcommand{\Spc}{\mathrm{(spc)}}
\newcommand{\glv}[1]{\dm{#1}}
\newcommand{\Glv}[1]{\glv{\m{#1}}}
\newcommand{\GLBA}[1]{\mathbf{G}_{#1}(\mathsf{BA})}
\DeclareMathOperator{\blto}{\blacktriangleright}
\newcommand{\Z}{\mathbb{Z}}
\newcommand{\FLeB}{\mathsf{FL}_\mathsf{e}^{\blacklozenge}}
\newcommand{\LgFLeB}{\mathbf{FL}_\mathbf{e}^{\blacklozenge}}
\newcommand{\FLeBA}{FL${}_{\mbox{\scriptsize e}}^\blacklozenge$}
\newcommand{\FLeA}{FL${}_{\mbox{\scriptsize e}}$}
\newcommand{\FLewA}{FL${}_{\mbox{\scriptsize ew}}$}
\newcommand{\Fm}{\mbox{\textit{Fm}}}
\newcommand{\Lg}{\mathbf{L}}
\author[1]{Davide Fazio}
\author[2]{Gavin St. John}
\affil[1]{\small{Facolt\`a di Scienze della Comunicazione, Universit\`a degli Studi di Teramo, Campus ``Aurelio Saliceti'', Via R. Balzarini, 1, 64100, Teramo (TE), Italy}}
\affil[2]{Dipartimento di Matematica,
Universit\`a degli Studi di Salerno,
Via Giovanni Paolo II, 132,  
84084, Fisciano (SA), Italy.}
\affil[1]{\href{mailto:dfazio2@unite.it}{dfazio2@unite.it}}
\affil[2]{\href{mailto:gavinstjohn@gmail.com}{gavinstjohn@gmail.com}}
\date{}
\begin{document}
\maketitle
\begin{abstract}
This paper is devoted to the investigation of term-definable connexive implications in substructural logics with exchange and, on the semantical perspective, in sub-varieties of commutative residuated lattices (\FLeA-algebras). In particular, we inquire into sufficient and necessary conditions under which generalizations of the connexive implication-like operation defined in \cite{Falepa} for Heyting algebras still satisfy connexive theses. It will turn out that, in most cases, connexive principles are equivalent to the equational Glivenko property with respect to Boolean algebras. Furthermore, we provide some philosophical upshots like e.g., a discussion on the relevance of the above operation in relationship with G. Polya's logic of plausible inference, and some characterization results on weak and strong  connexity.
\end{abstract}
\section{Introduction}
The basic ideas of connexive logic can be traced back to Aristotle's \emph{Prior Analytics} and to Boethius' \emph{De hypotheticis syllogismis} (see \cite{mccall2012}). Connexive logic developed as an underground stream along the whole history of logic until the sixties of the last century. Since then, it has become a well established subject of investigation in non-classical logic. Connexive principles reflect a \emph{connection}, or \emph{compatibility}, between the antecedent and consequent of sound conditionals. Specifically, they establish  that a conditional statement ``if $A$, then $B$'' is sound provided that the negation of $B$ is incompatible with $A$. Such a connection can be expressed in a language containing a unary (negation) connective $\neg$ and a binary (implication) connective $\under$ by means of axioms in Table \ref{CnxPrinc}.
\begin{table}[ht]
\centering
\begin{tabular}{c | c c}
	Aristotle's Theses 
	& \quad&
		\begin{tabular}{c}
		$\neg(A\under\neg A)$\\
		$\neg(\neg A\under A)$
		\end{tabular}
	\\
	\hline
	Boethius' Theses
	&\quad &
		\begin{tabular}{c}
		$(A\under B)\under\neg(A\under\neg B)$ \\
		$(A\under \neg B)\under\neg(A\under B)$
		\end{tabular}
\end{tabular}
\caption{Basic connexive laws}\label{CnxPrinc}
\end{table}

\noindent A connexive logic is nothing but a logic having the above formulas as theorems with respect to a negation $\neg$ and a non-symmetric implication $\under $. The latter requirement, which we call \emph{the principle of non-symmetry}, is essential, since $\under$ must be understood as a genuine implication rather than as an equivalence. Apparently, the above formulas are falsified in classical logic whenever implications with \emph{false} antecedents are considered. Therefore, classical logic is not connexive with material implication and negation. On the other hand, while material biconditional does satisfy the theses in Table~\ref{CnxPrinc}, it is obviously symmetric.

Connexive theses have been motivated by different considerations depending on the specific meaning given to implication. One motivation comes from relevance logic and the idea that semantic consequence is a content relationship (see e.g. \cite{Ro78}). \cite{cant2008} argues that connexive systems can formalize indicative natural language conditionals. Some authors (e.g. \cite{kapom2017}) suggest that a connexive implication is suitable for modelling counterfactual conditionals (see also \cite{wansinghunter}).  Moreover,  \cite{mccall75} proposes an interpretation of connexive conditionals in terms of physical or ``causal'' implications. Also, the results of empirical research on the interpretation of negated conditionals (see \cite{mccall2012,Pf2012,Pf2017,wansingstanford}) suggest that speakers having no previous knowledge of formal logic are inclined to consider connexive conditionals as the sound ones. Recently, A. Kapsner has argued that the intuitive appeal of connexive principles is rooted, \emph{at least in the case of indicative conditionals}, in the \emph{use} of implication in concrete argumentation. In particular, it seems to depend on the assumption that a conditional statement of the form ``if $A$, then $B$'' is sound provided that $A$ is ``epistemically possible''\label{epistemically possible} for the speaker asserting it. Quoting Kapsner (\cite{Kaps2020}):
\begin{quote}
There is a presupposition for indicatives (which I called IP) that says that the antecedent needs to be epistemically possible for the speaker: If I say ``If today is Monday, Susy will come home tomorrow'', then this is only correct if I don't know for sure that today is not Monday. 
\end{quote}
The interested reader is referred to \cite{wansingstanford} for an exhaustive survey on these topics.

As stated in \cite[p. 381]{WanHom}, ``[...] the central concern of connexive logic consists of developing connexive systems that are naturally motivated conceptually or in terms of applications, that admit of a simple and plausible semantics, and that can be equipped with proof systems possessing nice proof-theoretical properties [...]''. 
Indeed, the literature on this subject offers a large amount of connexive logical systems ``built from scratch'' having interesting proof-theoretical features and, in some cases, transparent and elegant semantics. However, less has been said on the possibility of defining connexive implications within well established sub-logics of classical logic or some expansion thereof. In \cite{Pizzi91}, C.~Pizzi shows that a connexive arrow, called \emph{consequential implication}, can be defined by means of the modal notions of necessity and possibility within ordinary modal logic systems S1-S5, plus the system T. This approach has the advantage of suggesting a well studied framework like ordinary modal logic as a conceptual and formal basis for connexive semantics (cf. \cite{mccall2012}). In the same spirit, G.~Gherardi and E.~Orlandelli introduce super-strict implications \cite{GherOrla}. Very recently, D.~Fazio, A.~Ledda, and F.~Paoli \cite{Falepa} have investigated \emph{Connexive Heyting Logic} (CHL) which is algebraizable in the sense of Blok \& Pigozzi (see e.g., \cite{BlokPig,Font16}) with respect to a subvariety of H.\,P.\,Sankappanavar's semi-Heyting algebras satisfying an equational rendering of Aristotle's and Boethius' theses, i.e. \emph{Connexive Heyting algebras} (CHA). It turns out that CHA's are term-equivalent to Heyting algebras, and so CHL is \emph{deductively equivalent} to Intuitionistic Logic. More precisely, a CHA $\m A$ is a semi-Heyting algebra $(A,\land,\lor,\cnx,0,1)$ satisfying, among other identities, the following:\label{allcontheses}
\begin{align*}
1\approx (x\cnx y)\cnx\neg(x\cnx\neg y) &&1\approx (x\cnx \neg y)\cnx\neg(x\cnx y)\\
1\approx\neg(x\cnx\neg x) &&1\approx\neg(\neg x\cnx x)
\end{align*}
where $\neg x\coloneq  x\cnx 0$. Remarkably enough, $\cnx$ is, in general, not symmetric. Setting $x\under y\coloneq  x\cnx(x\land y)$, it follows by general results on semi-Heyting algebras that $\mathbb{H}(\m A)=(A,\land,\lor,\under,0,1)$ is a Heyting algebra. Conversely, given a Heyting algebra $\m A$, upon setting 
\begin{equation}
x\cnx y\coloneq  (x\under y)\land(y\to\neg\neg x) \label{heytcon}
\end{equation}
(where $\neg x$ is defined as expected), one has that $\mathbb{C}(\m A)=(A,\land,\lor,\cnx,0,1)$ is a CHA. Furthermore, $\mathbb{C}$ and $\mathbb{H}$ are mutually inverse mappings.

Due to the above features, CHL, which is the $1$-assertional logic of CHAs, can be entitled as a full-fledged connexive logic. Moreover, it enjoys properties which are indeed rare within connexive logics literature. For example, CHL is \emph{strongly connexive} in the sense of \cite{Kaps2012}. Furthermore, it allows one to investigate connexivity with well-known mathematical tools and with the conceptual ``arsenal'' provided by Intuitionistic Logic (think e.g. to the BHK semantics).

A somewhat suggestive interpretation of the above results might be the following. For any Heyting algebra $\m A$, the operation $\neg\neg\colon A\to A$ is a nucleus over the $\ell$-monoid $(A,\land,\lor,\cdot,1)$, where $\cdot\ \coloneq  \ \land$. Therefore, $\neg\neg$ can be regarded as a modal operator (cf. e.g. \cite{Young}). If we read $\neg\neg x$ as ``$x$ is not absolutely false'', or ``$x$ is plausible'', then a well behaving connexive arrow $\cnx$ can be obtained from intuitionistic implication $\rightarrow$ by strengthening the latter in such a way that no true (or at least plausible) statement can be implied by an \emph{a priori} false/implausible one. \label{interpretcnxheyting}Therefore, connexive Heyting implication might be seen as encoding, at a semantic level, a weak version of Kapsner's presupposition (IP) recalled above.

Given this observation, we frame this technique in a broader class of non-classical logics, {\em Substructural Logics}, which are understood as the external logics induced by axiomatic extensions of the {\em Full Lambek Calculus} $\mathtt{FL}$, a sequent system introduced to model natural language. In this manuscript we confine ourselves to the commutative setting, namely to $\mathtt{FL}_\mathtt{e}$, which is obtained from $\mathtt{FL}$ by adding the exchange structural rule $(\mathtt{e})$. Its extensions include many of the most well-studied non-classical logics: intuitionistic logic, relevance logic, linear logic, many-valued logics, with classical logic as a limit case. They find applications to areas as diverse as linguistics, philosophy, and theoretical computer science. Moreover, extensions of $\mathtt{FL}_\mathtt{e}$ turn out to be particularly appealing as they are algebraizable, in the sense of Blok \& Pigozzi, w.r.t. (pointed) commutative residuated lattices. 

A thorough treatment of substructural logics and residuated lattices can be found in \cite{yellowbook}.

In this paper, we work from the semantical perspective and investigate the class of pointed commutative residuated lattices (\FLeA-algebras) for which an implication-type connective $\cnx$ satisfying equational renderings of Boethius' and Aristotle's theses can be term-defined. Specifically, we define connectives similar to $\cnx$ in \eqref{heytcon} by considering the following candidates:\label{gener conneximpl}
\[x\cnxm y\coloneq  (x\under y)\land(y\under\neg\neg x)\quad\text{and}\quad x\cnxp y\coloneq  (x\under y)\cdot(y\under \neg\neg x),\] 
where $\neg x\coloneq  x\under 0$. 
This choice is motivated not only by the intrinsic interest for investigating mathematical properties of Heyting algebras underlying connexive principles, but also from more ``philosophical'' considerations. In fact, we will argue that the above interpretation of $\cnxm$ can be deepened in the broader framework of \FLeA-algebras and put into relationship with G.~Polya's theory of plausible inferences in mathematics developed in \cite{Polya}. Indeed, we will see that, under a reasonable notion of ``being more credible/plausible/likely to be true'', $\cnxm$ might formalize the kind of conditionals involving a conjecture and one of its consequences as the antecedent and consequent, respectively. More precisely, $\cnxm$ is the semantical counterpart of the \emph{weakest} term-definable (in the external logic of $\FLe$) implication-like connective $\leadsto$  satisfying axiomatic renderings of \emph{modus ponens} and Polya's \emph{fundamental inductive pattern}: from the truth of ``$A$ implies $B$'' and $B$, one  deduces that ``$A$ is more credible''.

This work aims at showing that, for an \FLeA-algebra $\m A$, a condition ensuring that $\cnxm$ and $\cnxp$ defined over $\m A$ satisfy Aristotle's and Boethius' Theses is that $\m A$ has the equational Glivenko property w.r.t. Boolean algebras (see below and e.g. \cite{yellowbook}). Such a requirement becomes also necessary when one deals with $\cnxm$. As a consequence, we obtain a novel characterization of \FLeA-algebras enjoying the equational Glivenko property relative to BA's by establishing a new link between connexivity and concepts of well-known mathematical depth.


Moreover, we turn our attention to \emph{integral} \FLeA-algebras. In this case we obtain somewhat surprising and strong results. Indeed, we consider the binary operations $\cnx$ in the (pointwise ordered) interval $[\cnxpt,\cnxmt]$ (where $\pos$ is an {\expansive} map replaying the role of $\neg\neg$) over an integral \FLeA-algebra $\m A$, and we show that they satisfy connexive axioms if and only if at least one of them does and $\pos=\neg\neg$. In this case any of the $\cnx$'s is symmetric if and only if $\m A$ is a Boolean algebra. Note that the latter result still holds, at least for $\cnxm$, if integrality is dropped.  Therefore, we conclude that, apart from Boolean algebras themselves, \FLeA-algebras with the equational Glivenko property w.r.t. BA's (and their $1$-assertional logics, see below) can be regarded as suitable environments in which connexive implications of a certain type can be defined.

Finally, we focus on the concepts of strong and \emph{weak} (see below and e.g. \cite{wansinghunter}) connexivity. In fact, on the one hand we will show that, in some cases, weak connexivity and connexivity can be regarded as one and the same thing. On the other, it will turn out that, although strong connexivity has often been associated to contra-classical (cf. \cite{Humberstone}). theses (see e.g. the notion os \emph{superconnexivity} in \cite{Kaps2012}), in the framework of extensions of $\mathbf{FL}_{\mathbf{e}}$ for which $\cnxm$ and $\cnxp$ are connexive, it is nevertheless equivalent to a rather ``classical'' inference schema: \emph{ex falso quodlibet}. Therefore, also in view of characterization theorems outlined above, this work yields an overall picture which is somewhat surprising: although connexive theses (and weak/strong connexivity) make connexive logics incomparable with classical logic, in the context we deal with, they mirror some distinguishing traits of intuitionistic logic.\\ 

The results discussed above suggest that a systematic (programmatic) investigation of connexive implication connectives which are \emph{term-definable} within well known systems of non-classical logic might have interesting consequences. On the one hand, it would provide (eventually strongly) connexive logics with transparent semantics and implications with intuitive meanings. Therefore, such investigations would perhaps shed some light on the ``semantic source'' of connexive principles. On the other, this line of research would establish relationships between connexive systems and logics which, like substructural logics, have relevant applications in logico-philosophical investigations as well as in mathematics, computer science, and Artificial Intelligence. This work aims at being a small step in that direction. 

Let us summarize the discourse of the paper. In Section \ref{sec: prelim} we dispatch the basic notions needed for the development of our arguments, and we provide some preliminary results. Section \ref{sec: connflealg} is devoted to alternative characterizations of those \FLeA-algebras in which $\cnxp$, $\cnxm$ and (in the integral case) any binary operation $\cnx$ ``in the between'', satisfy connexive theses. We prove that, for $\cnxm$ (and under a slightly stronger assumption $\cnxp$), the \emph{largest} class of algebras of this sort coincides with the variety of \FLeA-algebras with the equational Glivenko property w.r.t. Boolean algebras. In Section \ref{sec: philosophicalimplications}, building on results obtained in previous sections, we provide some philosophical upshots like e.g. a discussion on the relevance of $\cnxm$, a characterization of substructural logics for which $\cnxm$ ($\cnxp$) is strongly connexive, and some remarks on the relationship between weak connexivity and connexivity.
We conclude in Section \ref{sec: conclusion}.


\section{Basic notions and preliminary results}\label{sec: prelim}
In this section we provide the necessary notions for the contents of this manuscript. We will assume the reader has a working understanding in basic concepts of universal algebra, referring them to \cite{Burris} for further edification. 
\subsection{Residuated structures}
Here we recall the definitions and properties of residuated lattices and FL-algebras, but only define their commutative versions as their full generality is not needed for the purpose of this work. For a thorough treatment of these structures and their connection to substructural logics, we refer the reader to \cite{yellowbook}.  

A {\em commutative residuated lattice} (CRL) is an algebra $\m{R} = \langle R, \meet ,\join, \cdot, \to,1 \rangle$ such that $\langle R, \meet,\join \rangle$ is a lattice, $\langle R, \cdot,1\rangle$ is a commutative monoid, and $\m{R}$ satisfies the {\em law of residuation}, i.e., for all $x,y,z\in R$,
\begin{equation}\tag{residuation}\label{Res}
x\cdot y \leq z \iff x\leq y\to z,
\end{equation}
where $\leq$ is the induced lattice order; i.e., $x\leq y$ iff $x\meet y = x$. 
The law of residuation can be written equationally, and therefore the class of commutative residuated lattices form a variety denoted by $\CRL$. 
We will often abbreviate $\cdot$ by concatenation, i.e. $xy\coloneq  x\cdot y$, with the convention that concatenation binds tighter than the remaining connectives in the signature. We also use a bi-implication abbreviation $x \bic y \coloneq   (x\to y)\meet (y\to x)$.  
A residuated lattice is called {\em integral} if the monoid unit is the greatest element, i.e., it satisfies the identity ($\mathsf{i}$): $x \meet 1 \eq x$. We note that, if there is a greatest element $\top$ (not necessarily $1$), then $x\leq y$ implies $x\under y \eq \top$. If there is a least element $\bot$, then there must be a greatest element, namely $\bot\under\bot$, and $\bot$ is \emph{absorbing}, i.e., $\bot\cdot x \eq \bot$.

Below we recall some basic properties of CRLs, which we will mostly utilize without reference throughout this article.

\begin{proposition}\label[proposition]{RLfacts}
The following identities hold in $\CRL$:
\begin{enumerate}
\item $x(y \join z) \eq xy \join xz$
\item $x\to(y\meet z) \eq (x\to y) \meet (x\to z)$
\item $(x \join y)\to z \eq (x\to z) \meet (y\to z)$
\item $1\to x \eq x$
\item $x\to (y\to z) \eq yx\to z$
\end{enumerate}
Consequently, $\cdot$ is order-preserving in both coordinates, while $\under$ is order-preserving (reversing) in its right (left) coordinate.
\end{proposition}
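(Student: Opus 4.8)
The plan is to derive every identity directly from the law of residuation \eqref{Res}, together with the monoid axioms (associativity, commutativity, unit) and the universal properties of $\meet$ and $\join$ in the lattice reduct; crucially, no prior monotonicity is used, since order-preservation is only claimed afterwards as a \emph{consequence}. The single underlying principle I would invoke is order-extensionality: two elements $u,v$ coincide precisely when $a \leq u \Leftrightarrow a \leq v$ for all $a$ (equivalently, when $u \leq a \Leftrightarrow v \leq a$ for all $a$), which is just antisymmetry applied with $a\coloneq u$ and $a\coloneq v$. Residuation then lets me ``slide'' a product factor across the arrow, so that each claimed identity collapses to a short chain of equivalences.

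For the two purely residual identities this is immediate. For (4), for every $a$ one has $a \leq 1\to x \iff a\cdot 1 \leq x \iff a \leq x$ by the unit law, whence $1\to x = x$. For (5), for every $a$, $a \leq x\to(y\to z) \iff ax \leq y\to z \iff (ax)y \leq z \iff a(yx) \leq z \iff a \leq (yx)\to z$, the middle steps using associativity and commutativity of $\cdot$; extensionality yields the identity. Identity (2) runs the same way but invokes the universal property of $\meet$: for every $a$, $a \leq x\to(y\meet z) \iff ax \leq y\meet z \iff (ax \leq y \text{ and } ax \leq z) \iff (a \leq x\to y \text{ and } a \leq x\to z) \iff a \leq (x\to y)\meet(x\to z)$.

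For (1) I would instead use the upper-bound form of extensionality, since the product, being a left adjoint, ought to preserve joins: for every $a$, $xy \join xz \leq a \iff (xy\leq a \text{ and } xz \leq a) \iff (y \leq x\to a \text{ and } z\leq x\to a) \iff y\join z \leq x\to a \iff x(y\join z)\leq a$, using the universal property of $\join$ and residuation; hence $x(y\join z)=xy\join xz$. Identity (3) then follows cleanly: for every $a$, $a \leq (x\join y)\to z \iff a(x\join y)\leq z \iff (ax \join ay)\leq z \iff (ax\leq z \text{ and } ay\leq z) \iff a \leq (x\to z)\meet(y\to z)$, where the third step is exactly the distributivity just established in (1).

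Finally, the ``Consequently'' clause falls out by specialization. From $y\leq z$ one has $y\meet z = y$ and $y\join z=z$; substituting into (2) and (1) gives $x\to y=(x\to y)\meet(x\to z)\leq x\to z$ and $xz = x(y\join z)=xy\join xz \geq xy$, i.e. order-preservation of $\to$ in its right coordinate and of $\cdot$ (in both coordinates, by commutativity). Dually, from $x\leq y$ one has $x\join y = y$, and (3) gives $y\to z=(x\to z)\meet(y\to z)\leq x\to z$, the claimed order-reversal in the left coordinate. I do not anticipate a genuine obstacle here: the only point requiring care is to run each step through \eqref{Res} and the lattice universal properties alone, so as not to presuppose the very monotonicity that is meant to be the corollary.
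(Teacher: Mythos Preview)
Your proof is correct and complete. The paper itself does not supply a proof of this proposition, treating the identities as standard facts about commutative residuated lattices (cf.\ the reference to \cite{yellowbook}); your argument via residuation and order-extensionality is exactly the standard derivation, and you were careful to deduce the monotonicity claims from the identities rather than presuppose them.
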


An {\em \FLeA-algebra} is simply a $0$-pointed CRL, i.e., a CRL whose signature is expanded by a constant $0$, and the variety of \FLeA-algebras is denoted by $\FLe$. An \FLeA-algebra is \emph{$0$-bounded} if $0$ is the least element, i.e., it satisfies ($\mathsf{o}$): $0\meet x \eq 0$. By ($\mathsf{w}$) we denote {\em weakening}, defined via $(\mathsf{w}) = (\mathsf{i}) + (\mathsf{o})$. We denote the variety of \FLeA-algebras satisfying integrality (weakening) by $\FLei$ ($\FLew$).

Given that $0$ is a constant in the signature of \FLeA-algebras, we define the unary operation $\neg x \coloneq   x\to 0$, and use the convention that $\neg$ binds tighter than any connective. 
An \FLeA-algebra is said to be {\em involutive} if it satisfies $x\eq \neg\neg x$. 
An \FLeA-algebra is called {\em pseudo-complemented} if it satisfies the identity 
\begin{equation}\label{pc}\tag{pc}
x\meet \neg x\leq 0.
\end{equation}

We say an \FLeA-algebra is {\em \spc} if the following identity holds:
\begin{equation}\label{spc}\tag{spc}
\neg x \meet \neg(x\under y) \leq 0
\end{equation}
Clearly $\Spc$ entails (\ref{pc}) as an instance, however the converse need not hold as is witnessed by Figure~\ref{fig:pc not spc} below.

\begin{figure}[h]
\centering
\begin{tikzpicture}[baseline=(current bounding box.center)] 
\node[label=right: {$1$}] (1) at (0,1) {$\bullet$};
\node[label=right: {$0=0^2$}] (1) at (0,0) {$\bullet$};
\node[label=right: {$\bot$}] (1) at (0,-1) {$\bullet$};
\draw (0,-1)--(0,0)--(0,1);
\end{tikzpicture}
\qquad
$
\begin{array}{c | c c c}
\under & 1 & 0 & \bot \\ \hline
1&1&0&\bot \\
0&1&1&\bot\\
\bot&1&1&1
\end{array}
$
\label{fig:pc not spc}
\caption{The Hasse diagram of a pseudo-complemented integral \FLeA-algebra which does not satisfy $\Spc$ since $\neg 0 \meet \neg(0\under \bot) = \neg 0 \meet \neg \bot =1 \meet 1 = 1\neq 0$.} 
\end{figure}

\begin{remark}\label[remark]{rem:spc-pc}
The converse does hold for $0$-bounded \FLeA-algebras, as being $0$-bounded entails $\neg x\leq x\to y$, and hence $\neg x \meet \neg(x\under y)\leq \neg x\meet \neg\neg x \leq 0$.
\end{remark}

It is easy to see that, for \FLeA-algebras, the operation $\neg$ is order-reversing and satisfies $\neg x\eq  \neg\neg\neg x$. Additionally, the map $x\mapsto \neg\neg x$ is a {\em nucleus}; i.e., a map $\gamma\colon G\to G$ on a partially-ordered groupoid that is a {\em closure operator} [namely, a map $\gamma$ that is {\em \expansive:} $x\leq \gamma(x)$; {\em monotone:} $x\leq y$ implies $\gamma(x)\leq \gamma(y)$; and {\em idempotent:} $\gamma\circ\gamma = \gamma$] which further satisfies the identity $\gamma(x)\cdot \gamma(y)\leq \gamma(xy)$ [or equivalently, $\gamma(\gamma(x)\cdot \gamma(y))\eq \gamma(xy)$]. It is well known that, for any \FLeA-algebra $\m A$, if $\gamma\colon A\to A$ is a nucleus then $\m A_\gamma\coloneq  \langle\gamma[A], \meet, \join_\gamma,\cdot_\gamma,  \to, \gamma(0),\gamma(1)\rangle$ is also an \FLeA-algebra, where $x\cdot_\gamma y\coloneq  \gamma(x\cdot y)$ and $x\vee_\gamma y\coloneq   \gamma( x\vee y)$ [E.g., see Chapter 3.4.11 in \cite{yellowbook}]. 

For simplicity, we use the notation $\dm x \coloneq   \neg\neg x$ henceforth, and summarize the basic properties of $\dm$, all of which hold for any nucleus, in the proposition below, which we may often utilize without reference.

\begin{proposition}\label[proposition]{dblr}
The following (quasi-) identities hold in $\FLe$:
\begin{enumerate}
\item $x\leq \dm x$
\item $x\leq y \implies \dm x \leq \dm y$ 
\item $\dm{\dm x} \eq \dm{x}$
\item $\dm x \cdot \dm y \leq \dm(xy)$
\item $\dm x \meet \dm y \eq \dm(\dm x \meet \dm y)$
\item $\dm x \to \dm y \eq x\to \dm y \eq \dm(x\to \dm y) $
\end{enumerate}
Consequently, for any $\m A\in \FLe$, $\Glv{A} \coloneq  \m A_\dm$ is an involutive \FLeA-algebra.
\end{proposition}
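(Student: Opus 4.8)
The plan is to obtain items (1)--(4) with no computation at all, simply by unwinding the fact recalled immediately above the statement that $\dm=\neg\neg$ is a nucleus. Item (1) is the \emph{expansive} law, item (2) is \emph{monotonicity}, item (3) is \emph{idempotency}, and item (4) is precisely the defining submultiplicativity inequality $\gamma(x)\gamma(y)\le\gamma(xy)$ of a nucleus. So these four hold for an arbitrary nucleus and require nothing beyond the definition.

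For item (5) I would use only that $\dm$ is a closure operator. The inequality $\dm x \meet \dm y \le \dm(\dm x \meet \dm y)$ is a direct instance of (1); for the reverse, from $\dm x \meet \dm y \le \dm x$ I apply (2) and (3) to get $\dm(\dm x \meet \dm y)\le \dm\dm x = \dm x$, and symmetrically $\dm(\dm x \meet \dm y)\le \dm y$, whence $\dm(\dm x \meet \dm y)\le \dm x \meet \dm y$. In words, meets of closed elements are closed.

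Item (6) is the only part needing a genuine argument, and it is exactly where residuation meets the nucleus. First I would show $x\to \dm y$ is closed, i.e. $\dm(x\to\dm y)=x\to\dm y$: by (1) only ``$\le$'' is at issue, and by \eqref{Res} it suffices to verify $x\cdot \dm(x\to \dm y)\le \dm y$, which follows from the chain $x\cdot \dm(x\to\dm y)\le \dm x\cdot \dm(x\to \dm y)\le \dm\big(x\,(x\to\dm y)\big)\le \dm\dm y=\dm y$, using $x\le\dm x$, the nucleus law (4), the residuation consequence $x(x\to\dm y)\le\dm y$, monotonicity, and idempotency; this yields the rightmost equality. For the middle equality $\dm x\to\dm y=x\to\dm y$, the inequality $\dm x\to\dm y\le x\to\dm y$ is immediate from $x\le\dm x$ and order-reversal of $\to$ in its left argument, while for the converse \eqref{Res} reduces matters to $\dm x\cdot(x\to\dm y)\le\dm y$, and using that $x\to\dm y$ is already closed I compute $\dm x\cdot(x\to\dm y)=\dm x\cdot\dm(x\to\dm y)\le\dm\big(x(x\to\dm y)\big)\le\dm\dm y=\dm y$.

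Finally, for the ``Consequently'' clause the general nucleus construction recalled above (cf.\ Chapter 3.4.11 of \cite{yellowbook}) already gives that $\Glv{A}=\m A_\dm$ is an \FLeA-algebra, whose distinguished constant is $\dm 0$. The observation that makes it \emph{involutive} is that $\dm 0 = 0$: since $\neg 1 = 1\to 0 = 0$ by \cref{RLfacts}(4), we get $\dm 0 = \neg\neg 0 = \neg\neg\neg 1 = \neg 1 = 0$ using $\neg x \eq \neg\neg\neg x$. Hence the negation internal to $\m A_\dm$ is $x\mapsto x\to\dm 0 = x\to 0 = \neg x$, and for every closed $x$ (so $x=\dm x$) we have $\neg\neg x = \dm x = x$; thus double negation is the identity on $\dm[A]$ and $\Glv{A}$ is involutive. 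I expect the only genuinely delicate point to be establishing the closedness of $x\to\dm y$ in item (6), since that is where residuation and the nucleus interact; everything else is routine bookkeeping with the closure-operator laws.
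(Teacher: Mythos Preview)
Your proof is correct. The paper does not actually supply a proof of this proposition: it is introduced as a summary of ``basic properties of $\dm$, all of which hold for any nucleus'' and left unproven, with an implicit reference to \cite{yellowbook} for the $\m A_\gamma$ construction. Your write-up is a faithful and standard expansion of exactly that: items (1)--(4) are literally the closure/nucleus axioms the paper lists just above the statement; item (5) is the routine fact that closed elements are closed under meets; and your argument for item (6) is the canonical one, with the only nontrivial step---showing $x\to\dm y$ is $\dm$-closed via $x\cdot\dm(x\to\dm y)\le\dm x\cdot\dm(x\to\dm y)\le\dm(x(x\to\dm y))\le\dm\dm y=\dm y$---handled correctly.

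For the ``Consequently'' clause you rightly isolate the one place where $\dm=\neg\neg$ (rather than an arbitrary nucleus) is essential: the identity $\dm 0 = 0$, which you obtain cleanly from $\neg 1 = 0$ and the fact $\neg\eq\neg\neg\neg$ recorded in the paper. That is indeed the point that makes the internal negation of $\m A_\dm$ coincide with $\neg$ and hence makes $\Glv{A}$ involutive; for a generic nucleus $\gamma$ the algebra $\m A_\gamma$ need not be involutive (take $\gamma=\mathrm{id}$). So your emphasis is well placed, and nothing is missing.
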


The following example will be a useful counter-model throughout this article.
\begin{example}\label{ex:Z(n)}
Let $\Z$ be the set of integers. It is well known that $\Z$ forms a (totally-ordered) commutative residuated lattice where $x\meet_\Z y \coloneq   \min\{x,y\}$; $x\join_\Z y\coloneq  \max\{x,y\}$, $x\cdot_\Z y\coloneq   x +y$; $x\under_\Z y: = y - x$; and $1_\Z\coloneq   0$. For fixed $n\in \Z$, by $\Z(n)$ we denote the \FLeA-algebra as an expansion of $\Z$ by taking $0_{\Z(n)}\coloneq  n$. In $\Z(n)$, it is clear that for all $x\in \Z$, $\neg x = n-x$ and $\neg\neg x = x$, so $\Z(n)$ is involutive, i.e., $\dm \Z(n)=\Z(n)$.
\end{example}

\subsection{Algebraization for substructural logics}
A \emph{logic} $\mathbf{L}$ over an algebraic language $\Lng$ is a \emph{structural consequence relation} (i.e., closed under substitutions and is reflexive, transitive, and monotone) ${\vdash}\subseteq\wp(\Fm_\Lng)\times \Fm_\Lng$. %
The interested reader is referred to \cite{Font16} for details.

The name ``\FLeA'' comes from the fact that \FLeA-algebras are the (unique) equivalent algebraic semantics of the \emph{external} logic $\mathbf{FL}_\mathbf{e}$ induced by the Full Lambek Calculus with exchange $\mathtt{FL}_{\mathtt{e}}$. 
Precisely, we have that, for any $\Phi\cup\psi\subseteq\Fm_\Lng$ of formulas over the language $\Lng=\{\meet,\join,\cdot,\under,0,1 \}$, \[\Phi\vdash_{\mathbf{FL}_\mathbf{e}}\psi\quad\text{iff}\quad\{\blto\varphi:\varphi\in\Phi\}\vdash_{\mathtt{FL}_{\mathtt{e}}}\blto\psi.\]
$\FLe$ is the equivalent algebraic semantics of $\mathbf{FL}_{\mathbf{e}}$ means that there exists a pair of ``mutually inverse'' mappings (called \emph{transformers}) $\tau:\Fm_{\Lng}\to \wp(\Fm_{\Lng}^{2})$ from formulas to sets of equations, and $\rho:\Fm_{\Lng}^{2}\to\wp(\Fm_{\Lng})$ from equations to sets of formulas such that, for any $\Phi\cup\psi\subseteq\Fm_\Lng$:
\begin{itemize}
\item $\Phi\vdash_{\mathbf{FL}_{\mathbf{e}}}\psi$ if and only if $\tau(\Phi)\models_{\FLe}\tau(\psi)$;
\item $x\approx y\leftmodels\models\tau\rho(x\approx y)$,
\end{itemize}
or, equivalently, for any set of equations $\{\epsilon_{i}\approx\delta_{i}:i\in I\}\cup\{\epsilon\approx\delta\}\subseteq Fm^{2}_{\Lng}$:
\begin{itemize}
\item $\{\epsilon_{i}\approx\delta_{i}:i\in I\}\models_{\FLe}\epsilon\approx\delta$ iff $\{\rho(\epsilon_{i}\approx\delta_{i}):i\in I\}\vdash_{\mathbf{FL}_{\mathbf{e}}}\rho(\epsilon\approx\delta)$;
\item $x\dashv\vdash_{\mathbf{FL}_{\mathbf{e}}}\rho(\tau(x))$. 
\end{itemize}
It can be seen that, in our framework, a suitable pair of transformers is given by $\tau(\varphi):=\{1\leq\varphi\}$, while $\rho(\epsilon\approx\delta):=\{(x\to y)\land(y\to x)\}$. Cf. \cite{GalOno} for details. For this reason, $\mathbf{FL}_{\mathbf{e}}$ is often referred to as the $1$-\emph{assertional} logic of $\FLe$. Now, the lattice of axiomatic extensions of $\mathbf{FL}_\mathbf{e}$, in this paper called \emph{substructural logics}\footnote{Typically, ``substructural logic'' refers to any axiomatic extension of the logic $\mathbf{FL}$, i.e. the external consequence relation induced by the Full Lambek Calculus $\mathtt{FL}$ without the axiom of exchange (commutativity in the corresponding algebras), where the same algebraization results hold. See \cite{yellowbook}.}, is dually isomorphic to the lattice of subvarieties of $\FLe$. Indeed, for every class $\K$ of \FLeA-algebras and for every set $\Phi\subseteq\Fm_\Lng$ of formulas over $\Lng$, let $\Lg(\K):= \{\phi\in\Fm_\Lng: \K \models 1\leq \phi \}$ and $\V(\Phi):=\FLe\cap \mathrm{ Mod}(\{ 1\leq \phi: \phi\in \Phi\})$, we state the well-known algebraization theorem (e.g., Theorem 2.29 in \cite{yellowbook}):
\begin{theorem}[The Algebraization Theorem]\label[theorem]{FLeLogics}
The following hold:
\begin{enumerate}
\item For every substructural logic $\Lg$, $\Lg$ is algebraizable with $\V(\Lg)$ as its equivalent algebraic semantics. In particular, for any $\Phi\cup\{ \gamma\}\subseteq\Fm_\Lng$: \[\Phi \vdash_\Lg \gamma\quad\text{iff}\quad\{1\leq \varphi: \varphi\in \Phi \} \models_{\V(\Lg)} 1\leq \gamma;\]
\item For any subvariety $\mathsf{V}\subseteq\FLe$, $\mathsf{V}$ is the equivalent algebraic semantics of the logic $\Lg(\V)$. Specifically, for any $E\cup\{s\eq t\}\subseteq\wp({\Fm_{\Lng}}^{2})$, one has: 
\[E\models_\V s\eq t\quad\text{iff}\quad\{ u\to v, v\to u : (u\eq v)\in E\} \vdash_{\Lg(\V)} (s\under t) \meet (t\under s).\footnote{Cf. \cite[Definition 3.9]{Font16}.}\]
\end{enumerate}
\end{theorem}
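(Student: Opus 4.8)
The plan is to derive the theorem from the general Blok--Pigozzi theory of algebraization, using the concrete transformers $\tau(\varphi) = \{1 \leq \varphi\}$ and $\rho(\epsilon\approx\delta) = \{(\epsilon\under\delta)\meet(\delta\under\epsilon)\}$ already displayed above, together with the dual isomorphism between axiomatic extensions of $\mathbf{FL}_\mathbf{e}$ and subvarieties of $\FLe$. The crux is a single order-theoretic lemma valid in every member of $\FLe$: for all $a,b$, one has $a \leq b$ iff $1 \leq a\under b$. This is immediate from \eqref{Res} and the fact that $1$ is the monoid unit, since $1 \leq a\under b \iff 1\cdot a \leq b \iff a\leq b$. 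Consequently $a\eq b$ iff $1\leq a\under b$ and $1\leq b\under a$, i.e. iff $1\leq (a\under b)\meet(b\under a) = a\bic b$; this is precisely the statement that an equation $\epsilon\approx\delta$ is interpreted identically to $\tau\rho(\epsilon\approx\delta)$ in every $\FLe$-algebra, which is one of the two defining conditions of an equivalent algebraic semantics.

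First I would verify the remaining defining condition for the pair $(\tau,\rho)$ at the level of the base logic, namely $\varphi \dashv\vdash_{\mathbf{FL}_\mathbf{e}} \rho\tau(\varphi)$. Unwinding the transformers and repeatedly using $1\under x \eq x$ (item (4) of \Cref{RLfacts}) together with the lattice behaviour of $\meet$ with $1$, the formula $\rho\tau(\varphi)$ collapses, up to interderivability in $\mathbf{FL}_\mathbf{e}$, to $\varphi$ itself. Combined with the displayed bridge asserting that $\Phi\vdash_{\mathbf{FL}_\mathbf{e}}\psi$ iff $\tau(\Phi)\models_{\FLe}\tau(\psi)$ (the assumed algebraization of the base logic), this certifies that $(\tau,\rho)$ witnesses $\FLe$ as the equivalent algebraic semantics of $\mathbf{FL}_\mathbf{e}$ in the strict Blok--Pigozzi sense.

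Next I would push this through the dual isomorphism to obtain both clauses. For part (1), a substructural logic $\Lg$ is by definition an axiomatic extension of $\mathbf{FL}_\mathbf{e}$; since the transformers of an algebraizable logic are inherited by all of its axiomatic extensions, $\Lg$ is algebraizable via the same $(\tau,\rho)$, and its equivalent algebraic semantics is the subvariety cut out by translating the new axioms into equations, which is exactly $\V(\Lg)$. Instantiating the first displayed bridge condition for $\Lg$ with $\tau(\varphi)=\{1\leq\varphi\}$ yields the stated equivalence $\Phi\vdash_\Lg\gamma$ iff $\{1\leq\varphi:\varphi\in\Phi\}\models_{\V(\Lg)}1\leq\gamma$. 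For part (2), given a subvariety $\V\subseteq\FLe$, the dual isomorphism gives $\V(\Lg(\V))=\V$, so $\V$ is the equivalent algebraic semantics of $\Lg(\V)$; the stated equivalence for $\models_\V$ is then the instantiation of the $\rho$-direction with $\rho(\epsilon\approx\delta)=\{(\epsilon\under\delta)\meet(\delta\under\epsilon)\}$.

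The main obstacle is not in these formal manipulations but in the fact they presuppose: the faithful translation between the external logic $\mathbf{FL}_\mathbf{e}$ (defined through the sequent calculus $\mathtt{FL}_\mathtt{e}$ via the $\blto$-encoding) and equational consequence over $\FLe$. Soundness of this translation is a routine induction, but its completeness requires a Lindenbaum--Tarski style construction producing, from a non-derivable consequence, a separating $\FLe$-algebra; this is the genuinely substantial ingredient, and it is exactly what we import as the already-stated algebraization of the base logic (and, ultimately, the cited Theorem~2.29 of \cite{yellowbook}). Granting it, the theorem reduces to bookkeeping: combining the order lemma above with the general transfer of algebraizability along axiomatic extensions and the dual isomorphism between extensions and subvarieties.
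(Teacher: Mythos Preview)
The paper does not prove this statement at all: it is stated as a well-known result and attributed to Theorem~2.29 of \cite{yellowbook} (and \cite{GalOno}), with no argument given. Your proposal is a correct and standard sketch of how one derives such a theorem from the general Blok--Pigozzi machinery, using the transformers $\tau$ and $\rho$ that the paper itself displays just before the statement. In that sense your approach is exactly the one implicit in the paper's citation; there is simply no in-paper proof to compare it against.

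One small remark: when you verify $\varphi \dashv\vdash_{\mathbf{FL}_\mathbf{e}} \rho\tau(\varphi)$, note that $\tau(\varphi)$ is the inequation $1\leq\varphi$, i.e.\ the equation $1\meet\varphi\approx 1$, so $\rho\tau(\varphi)$ is $(1\under(1\meet\varphi))\meet((1\meet\varphi)\under 1)$. The reduction to $\varphi$ therefore also uses that $(1\meet\varphi)\under 1$ is a theorem (since $1\meet\varphi\leq 1$) and that $1\meet\varphi$ is interderivable with $\varphi$; your phrase ``lattice behaviour of $\meet$ with $1$'' covers this, but it is worth making explicit since integrality is not assumed.
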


\subsection{Boolean algebras and their Glivenko varieties}

It is well known that Boolean algebras are term-equivalent to \FLeA-algebras satisfying $x\cdot y \eq x\meet y$ and $x \under y \eq \neg x \vee y$ (e.g., Lem.~8.32 in \cite{yellowbook}). By $\mathsf{BA}$ we will denote the variety of \FLeA-algebras which are term-equivalent to Boolean algebras. It is well known that Boolean algebras are the equivalent algebraic semantics of classical logic $\CPL$.\footnote{I.e., $\BA=\V(\CPL)$ and $\CPL=\Lg(\BA)$.} The following proposition is likely folklore, but we provide its proof as it will be useful later.

\begin{proposition}\label[proposition]{FLeBool}
Let $\m A$ be an \FLeA-algebra. Then $\m A$ is (term-equivalent to) a Boolean algebra iff $\m A$ is integral, involutive, and pseudo-complemented.
\end{proposition}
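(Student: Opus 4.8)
The plan is to prove both directions. The forward direction is essentially immediate: if $\m A$ is term-equivalent to a Boolean algebra, then $\m A$ satisfies $x\cdot y \eq x\meet y$ and $x\under y\eq \neg x \join y$. Integrality follows since $x\meet 1 = x\cdot 1 = x$. Involutivity ($x\eq\dm x$) and pseudo-complementation ($x\meet\neg x\leq 0$) are standard Boolean facts, which one reads off directly from the defining identities. So the real content is the converse.

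For the converse, assume $\m A$ is integral, involutive, and pseudo-complemented. By the cited term-equivalence (Lem.~8.32 in \cite{yellowbook}), it suffices to derive the two identities $x\cdot y\eq x\meet y$ and $x\under y\eq \neg x\join y$. I would first establish $x\cdot y \eq x\meet y$. The inequality $x\cdot y\leq x\meet y$ is available from integrality: since $y\leq 1$ we get $x\cdot y\leq x\cdot 1 = x$ by order-preservation of $\cdot$, and symmetrically $x\cdot y\leq y$, so $x\cdot y\leq x\meet y$. For the reverse inequality $x\meet y\leq x\cdot y$, I expect to use involutivity together with pseudo-complementation. The standard route is to show idempotency $x\eq x^2$ (equivalently $x\leq x^2$) and then square-increasingness of $\meet$; in an integral, pseudo-complemented, involutive \FLeA-algebra one typically has access to $x\meet\neg x\eq 0$ with $0$ behaving like a genuine bottom, which upgrades pseudo-complementation to the full Boolean complement law. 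This is where the argument has to be handled carefully.

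Once $\cdot=\meet$ is secured, the residuation law $x\cdot y\leq z\iff x\leq y\under z$ becomes the lattice-residuation law $x\meet y\leq z\iff x\leq y\under z$, which pins down $\under$ as the relative pseudo-complement. Using involutivity to convert between $y\under z$ and its double negation, and using $\neg y = y\under 0$ together with the complement law, I would derive $x\under y\eq \neg x\join y$. The clean way is to verify the residuation adjunction directly: check that $z\leq \neg x\join y$ holds iff $z\meet x\leq y$ (using distributivity of the Boolean lattice, which we must also confirm holds under our hypotheses) and then appeal to uniqueness of residuals to conclude $x\under y=\neg x\join y$.

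The main obstacle I anticipate is the reverse inequality $x\meet y\leq x\cdot y$, i.e., promoting pseudo-complementation to idempotency/Booleanness. Integrality alone gives only $x\cdot y\leq x\meet y$, and pseudo-complementation as stated is merely $x\meet\neg x\leq 0$ rather than $x\meet\neg x\eq 0$ or $x\join\neg x\eq 1$; the work lies in combining it with involutivity (via the properties of $\dm$ in \cref{dblr}, especially $\dm x\to\dm y\eq x\to\dm y$ and the nucleus inequality $\dm x\cdot\dm y\leq\dm(xy)$) to force genuine Boolean complements and hence $x=x^2$. I would lean on the De Morgan-type identities that involutivity provides, translating meets into joins through $\neg$, and on integrality to ensure $0$ is the least element so that $x\meet\neg x\leq 0$ really yields $x\meet\neg x=0$. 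Verifying distributivity of the underlying lattice is a secondary point that should follow once $\cdot=\meet$ and the complement law are in place.
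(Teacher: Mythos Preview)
Your strategy is sound, but you leave its crucial step---proving $x\leq x^2$---unexecuted. You correctly note that integrality plus involutivity make $0$ the bottom (since $\neg$ is an order-reversing involution sending the top $1$ to $\neg 1=0$), so pseudo-complementation upgrades to $x\meet\neg x=0$. From there idempotency is a two-line computation you never give: by integrality $x\cdot(x\under\neg x)\leq x$, and always $x\cdot(x\under\neg x)\leq\neg x$, so $x\cdot(x\under\neg x)\leq x\meet\neg x=0$; residuating, $\neg(x^2)=x\under\neg x\leq\neg x$, and involutivity yields $x\leq x^2$. Then $x\meet y=(x\meet y)^2\leq xy$ as you indicate. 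The vague appeals to nucleus inequalities and De Morgan identities in your final paragraph are not what is needed here.

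The paper takes a slightly more direct route that bypasses idempotency: it shows $\neg(xy)\cdot(x\meet y)\leq 0$ in one line (using $\neg(xy)=x\under\neg y$, integrality, and $(x\under\neg y)\cdot x\leq\neg y$), then residuates and applies involutivity to get $x\meet y\leq\dm(xy)=xy$. Both arguments are short; the paper's avoids the detour through $x=x^2$, while yours isolates a reusable fact.

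For $x\under y=\neg x\vee y$, the paper's computation is cleaner than either of your sketched routes: once $\cdot=\meet$, one writes
\[
x\under y=x\under\dm y=\neg(x\cdot\neg y)=\neg(x\meet\neg y)=\dm(\neg x\vee y)=\neg x\vee y,
\]
using only involutivity and the always-valid identity $\neg(a\vee b)=\neg a\meet\neg b$. Your proposal to verify the adjunction via distributivity would also work (distributivity is immediate once $\cdot=\meet$, since $\cdot$ distributes over $\vee$ in any CRL), but is unnecessarily roundabout.
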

\begin{proof}
The forward direction is obvious. For the reverse direction, let $x,y\in L$. First, by using the properties of integrality, commutativity, and pseudo-complemented, we compute $\neg(xy)\cdot(x\meet y) \leq \neg(xy)x \meet y = (x\under \neg y)x \meet y \leq \neg y \meet y \leq 0,$ 
hence $x\meet y \leq \neg\neg(xy) = xy$ by residuation and involutivity. Since $xy\leq x\meet y $ by integrality, we obtain $x\cdot y = x\meet y$. Hence, using this fact and involutivity, we obtain
$$x\under y = x\under \neg\neg y = \neg(x\cdot\neg y) =\neg (x\meet \neg y) = \neg (\neg\neg x\meet \neg y)=\neg\neg(\neg x \vee \neg\neg y)=\neg x\vee y. $$
\end{proof}

It is well known that classical propositional logic $\CPL$ is {\em translatable} into intuitionistic propositional logic $\IPL$; namely, $\vdash_\CPL \varphi$ iff $\vdash_\IPL \neg\neg\varphi$, for any formula $\varphi$. %
This concept, known as the Glivenko property, has been studied for substructural logics in general (see \cite{GalOno,yellowbook}), as the property is reflected via their algebraic semantics via the algebraization theorem. Indeed, for substructural logics $\Lg$ and $\mathbf{K}$, we say the \emph{Glivenko property holds for $\mathbf{K}$ relative to $\Lg$} if for any formula $\varphi$, $\vdash_\Lg \varphi$ iff $\vdash_{\mathbf{K}}\neg\neg\varphi$. Indeed, by \Cref{FLeLogics}, this equivalently stated: for any \FLeA-term $t$, $\V(\Lg)\models 1\leq t$ iff $\V(\mathbf{K})\models 1\leq \neg\neg t$. 

\begin{proposition}[See \cite{yellowbook, GalOno}]
For subvarieties $\V,\W\subseteq\FLe$ and \FLeA-terms $s,t$, the following are equivalent:
\begin{enumerate}
\item $\V\models 1\leq t$ iff $\W\models 1\leq \neg\neg t$.
\item $\V\models s\leq t$ iff $\W\models \neg t\leq \neg s$.
\item $\V\models s\eq t$ iff $\W\models \neg s \eq \neg t$.
\end{enumerate}
\end{proposition}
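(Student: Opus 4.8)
The plan is to prove the equivalence of the three Glivenko-type characterizations by establishing a cycle of implications, say $(1)\Rightarrow(2)\Rightarrow(3)\Rightarrow(1)$, exploiting the order-reversing behaviour of $\neg$ and the fact that $\dm = \neg\neg$ is a nucleus (\Cref{dblr}). The underlying bridge throughout is that, by \Cref{FLeLogics}, validity of an inequality $s\leq t$ in a variety is the same as validity of the identity $s\meet t\eq s$, and that $\W\models \neg t\leq \neg s$ is the same as $\W\models 1\leq \neg s\under\neg t$; so all three items can be rephrased as single-variable-style inequalities $1\leq(\cdots)$ and compared term-by-term.

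First I would handle $(1)\Rightarrow(2)$. Given $\V\models s\leq t$, I want $\W\models \neg t\leq \neg s$. The key observation is that $s\leq t$ is equivalent to $1\leq s\under t$, so by (1) applied to the term $s\under t$, we get $\W\models 1\leq \dm(s\under t)$. Then I would use the nucleus identities, especially item~(6) of \Cref{dblr} ($\dm x\under\dm y\eq x\under\dm y$) together with the order-reversing/contraposition facts for $\neg$, to rewrite $\dm(s\under t)$ and conclude $\W\models \neg t\leq \neg s$. Conversely, for the reverse half of (2) I would run the same manipulations backwards, using that $\neg t\leq\neg s$ gives $1\leq \neg s\und\neg t$ and then translating this back through (1) into $\V\models 1\leq s\under t$, i.e. $s\leq t$ in $\V$. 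Both directions hinge on the ``iff'' in (1) being bidirectional, so I should be careful to carry the biconditional through rather than just one implication.

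Next, $(2)\Rightarrow(3)$ is the easy step: an equation $s\eq t$ is the conjunction of $s\leq t$ and $t\leq s$, and $\neg s\eq\neg t$ is the conjunction of $\neg t\leq\neg s$ and $\neg s\leq\neg t$; applying (2) to the pair $(s,t)$ and to the pair $(t,s)$ and conjoining gives (3) in both directions. For $(3)\Rightarrow(1)$ I would specialize: take $s$ to be the term $t$ of interest and $t$ to be $1$ (or exploit that $1\leq t$ is equivalent to $t\eq t\meet 1$, hence an instance of an equation), and then use $\neg 1\eq 0$ together with $\neg\neg t$-manipulations to recover the double-negation statement of (1). The recurring technical care is making sure each ``$s\leq t$ iff $1\leq\ldots$'' and each ``$\neg t\leq\neg s$ iff $1\leq\ldots$'' reduction is valid in the relevant variety and that the nucleus identities are applied in the correct coordinate.

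The main obstacle I expect is the algebraic bookkeeping in $(1)\Rightarrow(2)$: correctly converting between $1\leq\dm(s\und t)$ and $\neg t\leq \neg s$ requires the right combination of items (1), (3), and (6) of \Cref{dblr} (so that double negations can be inserted or absorbed without changing validity in $\W$), and one must verify that the glivenko hypothesis transfers the \emph{biconditional}, not merely one direction. Since this proposition is cited as known (``See \cite{yellowbook, GalOno}''), I would expect the cleanest write-up to quote the nucleus identities explicitly at each rewriting step rather than re-deriving them, keeping the proof short and transparent.
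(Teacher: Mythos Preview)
The paper does not actually prove this proposition; it is stated with the citation ``See \cite{yellowbook, GalOno}'' and no argument is given. So there is no in-paper proof to compare against, and I can only comment on the soundness of your plan.

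Your steps $(2)\Rightarrow(3)$ and $(2)\Rightarrow(1)$ (via $s:=1$, using $\neg 1=0$ and $\neg t\leq 0\iff 1\leq\dm t$) are fine. For $(3)\Rightarrow(2)$ the clean route is $s\leq t\iff s\vee t\eq t$ together with the $\FLe$-identity $\neg(s\vee t)=\neg s\meet\neg t$; your suggested route via $s\meet t$ does not obviously close. The \emph{forward} half of $(1)\Rightarrow(2)$ is also fine: from $\V\models 1\leq s\under t$ one gets $\W\models 1\leq \dm(s\under t)\leq s\under\dm t=\neg t\under\neg s$.

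The real problem is the \emph{backward} half of $(1)\Rightarrow(2)$. You write that from $\W\models\neg t\leq\neg s$ you will ``run the manipulations backwards'' and use (1) to obtain $\V\models 1\leq s\under t$. To invoke (1) here you need $\W\models 1\leq \dm(s\under t)$, but $\W\models\neg t\leq\neg s$ only gives $\W\models 1\leq s\under\dm t$, and $\FLe$ does \emph{not} satisfy $s\under\dm t\leq \dm(s\under t)$. \Cref{dblr}(6), which you cite, yields $\dm(s\under\dm t)=s\under\dm t$, not $\dm(s\under t)=s\under\dm t$. Concretely, in the $3$-element algebra of Figure~\ref{fig:pc not spc} with $s=0$ and $t=\bot$ one has $s\under\dm t=0\under 0=1$ while $\dm(s\under t)=\dm\bot=0\ngeq 1$. (There is also a small slip: $\neg t\leq\neg s$ gives $1\leq \neg t\under\neg s$, not $1\leq \neg s\under\neg t$.)

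This is not a cosmetic issue. Taking $\V=\W$ to be the variety generated by that same $3$-element algebra, condition~(1) holds (in that algebra $1\leq a\iff a=1\iff \dm a=1\iff 1\leq\dm a$), yet condition~(2) fails for $s:=\dm x$, $t:=x$ (since $\neg x\leq\neg\dm x=\neg x$ always, but $\dm x\leq x$ fails at $x=\bot$). So $(1)\Rightarrow(2)$ cannot be recovered just from the nucleus identities you list; you should consult the cited sources for the exact formulation and the intended argument for that direction.
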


Thus, for varieties $\V$ and $\W$ of \FLeA-algebras, we say the {\em equational Glivenko property holds for $\W$ relative to $\V$} iff
\begin{equation}\label{eq:GlvProp}
\V \models s\eq t \iff \W\models \neg s \eq \neg t 
\end{equation}
for any equation $s\eq t$ in the language of $\FLe$. In particular, it follows that the equational Glivenko property holds for the variety of Heyting algebras $\mathsf{HA}$ relative to $\BA$, as it is well known $\mathsf{HA}=\V(\IPL)$. Note that the algebras in $\mathsf{HA}$ are exactly those \FLewA-algebras in which $\cdot$ coincides with $\meet$.

By $\mathbf{G}_\FLe(\V)$ we denote the largest subvariety $\W$ for which the equational Glivenko property holds for $\W$ relative to $\V$. By $\mathbf{G}_\U(\V)$ we denote $\mathbf{G}_\FLe(\V) \cap \U$, where $\U$ is a subvariety of $\FLe$, which we call \emph{the  Glivenko variety of $\U$ relative to $\V$}. Dually, for a substructural logic $\Lg$, $\mathbf{G}_{\mathbf{FL}_\mathbf{e}}(\Lg)$ denotes the \emph{least} (w.r.t. set inclusion) substructural logic $\mathbf{K}$ for which the Glivenko property holds relative to $\Lg$. If $\mathbf{K}$ is a substructural logic, by $\mathbf{G}_\mathbf{K}(\Lg)$ we denote $\mathbf{G}_{\mathbf{FL}_\mathbf{e}}(\Lg) \vee \mathbf{K}$, i.e., the \emph{smallest} extension of $\mathbf{K}$ having the Glivenko property w.r.t. $\mathbf{L}$, which we call \emph{the Glivenko logic of $\mathbf{K}$ relative to $\Lg$}.


\begin{theorem}[See \cite{yellowbook, GalOno}]\label{GLBA}~ 
\begin{enumerate}
\item $\GLBA{\FLe}$ is axiomatized relative to $\FLe$ by the following identities:
	\begin{enumerate}
	\item $\neg(x\cdot y)\eq \neg(x\meet y)$
	\item $1\leq \dm(\dm x\under x)$ [or, alternatively, $\neg(x\under y) \eq \neg(\neg x \vee y)$]
	\end{enumerate}
\item $\GLBA{\FLei}$ is axiomatized relative to $\FLei$ by the following identities:
	\begin{enumerate}
	\item $x\meet \neg x \leq 0$ [or, alternatively, $\neg(x\meet y)\eq \neg(x\cdot y)$]
	\item $1\leq \dm(\dm x\under x)$
	\end{enumerate}
\end{enumerate}
\end{theorem}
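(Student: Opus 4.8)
The plan is to present $\GLBA{\FLe}$ as the class of models of all \emph{negated} Boolean identities and then collapse this scheme to the two displayed equations. By the very definition of $\GLBA{\FLe}$ as the largest subvariety satisfying \eqref{eq:GlvProp} with $\V=\BA$, one has $\GLBA{\FLe}=\mathrm{Mod}_{\FLe}\big(\{\neg s\eq\neg t:\BA\models s\eq t\}\big)$: the forward half of \eqref{eq:GlvProp} gives ``$\subseteq$'', while the model class on the right also enjoys the Glivenko property (its reverse half holds because it contains $\BA$, which being involutive cannot equate $\neg s$ and $\neg t$ without equating $s$ and $t$), so maximality gives ``$\supseteq$''. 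It therefore suffices to show that, over $\FLe$, this scheme is equivalent to (a) together with (b). Necessity is immediate: (a) and the alternative form of (b) are literally the $\neg$-images of the $\BA$-valid identities $x\cdot y\eq x\meet y$ and $(x\to y)\eq(\neg x\join y)$ recalled before \Cref{FLeBool}, while (b) follows from the instance $\neg(\dm x\to x)\eq\neg 1$.

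For sufficiency fix $\m A\models$(a),(b); recall from \Cref{dblr} that $\Glv{A}$ is involutive. Applying $\neg$ to (a) gives $\dm(xy)\eq\dm(x\meet y)$; evaluated at $\dm$-fixed (regular) elements and using $\dm x\meet\dm y\eq\dm(\dm x\meet\dm y)$ from \Cref{dblr}, this yields $x\cdotg y\eq x\meet y$ on $\Glv{A}$. Hence $\Glv{A}$ is an involutive \FLeA-algebra whose monoid operation is $\meet$, which forces integrality and, through $x\cdot\neg x\leq 0$, pseudo-complementation; so $\Glv{A}\in\BA$ by \Cref{FLeBool}. The second step is to show that (a) and (b) together force $\dm(x\to y)\eq x\to\dm y$, i.e.\ that $\dm$ preserves $\to$ into $\Glv{A}$. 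As $\dm$ already preserves $\meet$, $\cdot$ (into $\cdotg$), $\join$ (into $\joing$) and the constants, it is then a surjective homomorphism $\m A\to\Glv{A}$, and the transfer is immediate: if $\BA\models s\eq t$, then for every assignment $h$ one computes $\dm(s^{\m A}(h))=s^{\Glv{A}}(\dm\circ h)=t^{\Glv{A}}(\dm\circ h)=\dm(t^{\m A}(h))$, whence $\neg s^{\m A}(h)\eq\neg t^{\m A}(h)$ using $\neg\dm z\eq\neg z$. Thus $\m A$ satisfies the whole scheme, and part (1) follows.

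Part (2) is the same argument relativised to $\FLei$. Here integrality yields $\dm 0\eq 0$ and $\dm 1\eq 1$, so $\Glv{A}$ is automatically integral and, by \Cref{FLeBool}, Boolean as soon as it is pseudo-complemented. Using $x\leq\dm x$ in one direction and instantiating at $\neg x$ in the other, pseudo-complementation of $\Glv{A}$ is seen to be equivalent to $x\meet\neg x\leq 0$ in $\m A$; and under integrality this is in turn equivalent to axiom (a) of part (1) in the form $\neg(x\meet y)\eq\neg(x\cdot y)$, the key inequality $\neg(xy)\,(x\meet y)\leq\neg y\meet y\leq 0$ being exactly the one in the proof of \Cref{FLeBool}. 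With (b) unchanged, this gives the stated axiomatization of $\GLBA{\FLei}$ relative to $\FLei$.

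The main obstacle is the sufficiency step, and specifically its two translations: that (a) forces $\cdotg=\meet$ on $\Glv{A}$, and that (a) and (b) jointly force $\dm$ to preserve $\to$. These are delicate precisely because $\to$ is the one operation a nucleus need not respect, and because in the non-integral case $\dm 0$ differs from $0$, so negation as computed inside $\Glv{A}$ must be kept distinct from negation in $\m A$; part of the same bookkeeping is verifying that the two displayed forms of (b) are interchangeable modulo (a). Once these identities are established by the residuation and nucleus calculus of \Cref{dblr}, the homomorphism transfer and the integral specialisation are routine.
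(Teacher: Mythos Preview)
Your proposal is correct, and since the paper itself cites this theorem from \cite{yellowbook,GalOno} without proof, there is no in-paper argument to compare against directly. That said, your strategy---first identifying $\GLBA{\FLe}$ with the model class of the full $\neg$-scheme, then collapsing the scheme to (a)+(b) by exhibiting $\dm\colon\m A\to\Glv{A}$ as a surjective homomorphism onto a Boolean algebra---is exactly the machinery the paper deploys in the adjacent \Cref{GLBAbool} (specifically the direction (3)$\Rightarrow$(1), where $(\star\star)$ is derived from $(\star)$ plus $\Glv{A}$ integral). So your reconstruction and the paper's subsequent development are in full agreement.

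One small imprecision worth flagging: you write that ``$\dm$ already preserves $\meet$'' as though this were a general nucleus fact. It is not; a nucleus on an \FLeA-algebra need not satisfy $\dm(x\meet y)=\dm x\meet\dm y$. Here it does follow, but only \emph{because} of axiom~(a): from $\dm(\dm x\cdot\dm y)=\dm(\dm x\meet\dm y)=\dm x\meet\dm y$ (the last equality by \Cref{dblr}(5)) and $\dm(\dm x\cdot\dm y)=\dm(xy)=\dm(x\meet y)$ (the nucleus identity plus~(a)). With that step made explicit, the homomorphism transfer goes through cleanly, and your handling of the integral case---reducing (a) to pseudo-complementation via the computation in \Cref{FLeBool} and noting $\dm 0=0$, $\dm 1=1$---is sound.
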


\begin{remark}\label[remark]{GLBAisBigger}
 Of course, $\BA\subsetneq\GLBA{\FLew}\subseteq\GLBA{\FLei}\subseteq\GLBA{\FLe}$ as any Heyting algebra is in $\GLBA{\FLew}$ but there are Heyting algebras which are not Boolean, see e.g. \cite{yellowbook}.
\end{remark}

As they will be useful in what follows, we provide further characterizations for the Glivenko varieties of $\FLe$ ($\FLei$, and $\FLew$) relative to Boolean algebras.

\begin{lemma}\label[lemma]{GLBAbool}
For an \FLeA-algebra $\m A$, the following are equivalent:
\begin{enumerate}
\item $\m A\in \GLBA{\FLe}$.
\item $\m A$ is pseudo-complemented, satisfies $1\leq \dm(\dm x\under x)$, and has $\dm 1$ as its greatest element (i.e., $\dm \m A$ is integral).
\item $\dm \m A$ is Boolean and $\m A$ satisfies $1\leq \dm(\dm x\under x)$.
\end{enumerate}
\end{lemma}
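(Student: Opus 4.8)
The plan is to read condition (1) through \Cref{GLBA}(1), which says $\m A\in\GLBA{\FLe}$ iff $\m A$ satisfies (a) $\neg(xy)\eq\neg(x\meet y)$ and (b) $1\leq\dm(\dm x\to x)$, and to read ``$\dm\m A$ is Boolean'' through \Cref{FLeBool}: since $\dm\m A=\Glv{A}$ is always involutive (\Cref{dblr}), it is Boolean iff it is integral and pseudo-complemented. I would then establish $(1)\Rightarrow(2)$, $(2)\Leftrightarrow(3)$, and $(3)\Rightarrow(1)$, which closes the cycle. One small observation is used throughout: $\dm\m A$ is integral iff $\dm 1=\neg 0$ is the greatest element of $\dm[A]$, i.e. $\dm x\leq\neg 0$ for all $x$; applying $\neg$ this is equivalent to $\neg x\geq\dm 0$ for all $x$, and instantiating at $x=1$ forces $\dm 0\leq\neg 1=0$, so that integrality of $\dm\m A$ yields $\dm 0=0$.

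The equivalence $(2)\Leftrightarrow(3)$ rests on a sub-claim: whenever $\dm 0=0$, the algebra $\m A$ is pseudo-complemented iff $\dm\m A$ is. The forward direction is immediate, as $\dm[A]\subseteq A$ and the negation of $\dm\m A$ agrees with $\neg$ when $\dm 0=0$. For the converse, given $x\in A$ I would apply pseudo-complementation of $\dm\m A$ to $\neg x\in\dm[A]$ to obtain $\neg x\meet\dm x\leq 0$, and then use $x\leq\dm x$ to conclude $x\meet\neg x\leq\dm x\meet\neg x\leq 0$. Granting this, $(2)\Leftrightarrow(3)$ follows: both conditions carry (b); integrality of $\dm\m A$ is common to both (and supplies $\dm 0=0$); the sub-claim lets me trade pseudo-complementation of $\m A$ for that of $\dm\m A$; and ``integral $+$ involutive $+$ pseudo-complemented'' is exactly ``Boolean'' by \Cref{FLeBool}. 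For $(1)\Rightarrow(2)$, applying $\neg$ to (a) gives $\dm(xy)=\dm(x\meet y)$, so for $a,b\in\dm[A]$ we get $a\cdotg b=\dm(ab)=\dm(a\meet b)=a\meet b$; hence $\cdotg=\meet$ on $\dm[A]$, which makes the unit $\dm 1$ the top of $\dm[A]$, i.e. $\dm\m A$ integral. Pseudo-complementation comes from (a) with $y\coloneq\neg x$: since $x\neg x\leq 0$ we get $1\leq\neg 0\leq\neg(x\neg x)=\neg(x\meet\neg x)$, whence $x\meet\neg x\leq 0$; and (b) is carried verbatim.

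The crux is $(3)\Rightarrow(1)$, where axiom (a) must be recovered from ``$\dm\m A$ Boolean $+$ (b)''. Booleanness supplies the multiplicativity $\dm(xy)=\dm(\dm x\cdot\dm y)=\dm x\cdotg\dm y=\dm x\meet\dm y$. The inequality $\neg(xy)\leq\neg(x\meet y)$ is then easy, since $x\meet y\leq\dm x\meet\dm y=\dm(xy)$ gives $(x\meet y)\,\neg(xy)\leq\dm(xy)\,\neg\dm(xy)\leq 0$. The reverse inequality $\neg(x\meet y)\leq\neg(xy)$ is equivalent to $xy\leq\dm(x\meet y)$, hence to $\dm x\meet\dm y\leq\dm(x\meet y)$, i.e. to $\dm$ preserving the meet $x\meet y$. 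To attack this I would set $d_x\coloneq\dm x\to x$, which is dense (by (b), since $1\leq\dm d_x$ forces $\dm d_x\geq\dm 1=\top$), and use $\dm x\cdot d_x\leq x$ to get $(\dm x\meet\dm y)(d_x\meet d_y)\leq x\meet y$; applying the multiplicative $\dm$ and $\dm(\dm x\meet\dm y)=\dm x\meet\dm y$ then reduces the goal to $\dm(d_x\meet d_y)=\dm 1$, that is, to the closure of dense elements under meet.

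This closure is the main obstacle. It is exactly the step at which axiom (b) must be used in an essential, genuinely non-integral way: Booleanness of $\dm\m A$ only yields multiplicativity ($\dm$ of a \emph{product} is the meet of the $\dm$'s), and passing from products to meets is automatic when $\m A$ is integral but not in general, so the universal density hypothesis (b) is precisely what bridges the gap. I expect the decisive technical effort to lie here; once dense elements are shown closed under meet, the factorization above delivers $\dm x\meet\dm y\leq\dm(x\meet y)$, hence axiom (a), and the cycle closes.
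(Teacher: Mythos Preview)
Your treatment of $(1)\Rightarrow(2)$ and $(2)\Leftrightarrow(3)$ is correct and in fact more explicit than the paper's, which dismisses $(1)\Rightarrow(2)$ as obvious and derives $(2)\Rightarrow(3)$ directly from \Cref{FLeBool}. The reduction you set up for $(3)\Rightarrow(1)$ is also valid: the factorization $(\dm x\meet\dm y)(d_x\meet d_y)\leq x\meet y$ together with the multiplicativity $\dm(uv)=\dm u\meet\dm v$ does reduce the hard inequality $\dm x\meet\dm y\leq\dm(x\meet y)$ to the single claim $\dm(d_x\meet d_y)=\dm 1$.

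The genuine gap is exactly where you flag it, and you have not supplied the missing idea. Booleanness of $\dm\m A$ only gives closure of dense elements under $\cdot$, not under $\meet$, and there is no evident direct argument for the latter that does not already amount to the paper's trick. The paper bypasses dense elements entirely: it first establishes the auxiliary identity $(\star\star)\colon x\to\dm y=\dm(x\to y)$ from (b) and $\dm 1$ being top, and then observes the elementary inequality
\[
\dm 1\to 1 \;\leq\; (x\vee y)\to 1 \;=\; (x\to 1)\meet(y\to 1)\;\leq\; (xy\to y)\meet(xy\to x)\;=\;xy\to(x\meet y),
\]
which only uses $x,y\leq\dm 1$ and $1\leq z\to z$. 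Applying $\dm$ and $(\star\star)$ then gives $1\leq\dm(\dm 1\to 1)\leq xy\to\dm(x\meet y)$, i.e.\ $\dm(xy)\leq\dm(x\meet y)$. Your route \emph{can} be completed with this same inequality---instantiated at $d_x,d_y$ it yields $\dm(d_x\meet d_y)\geq\dm\bigl((\dm 1\to 1)\cdot d_xd_y\bigr)=\dm 1$---but once you have it, the detour through $d_x,d_y$ is redundant; the paper's line is shorter and gives the result directly.
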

\begin{proof}
(1) implies (2) is obvious. Since $\dm A$ is involutive by fiat, (2) implies (3) follows \Cref{FLeBool}. For (3) implies (1), suppose $\dm \m A$ is Boolean and $\m A$ satisfies $(\star):1\leq \dm(\dm x\under x)$. So $\dm 1$ is the greatest element of $\m A$ and the following identities hold in $\m A$:
$$\dm (x\cdot y)\eq \dm x\meet\dm y \quad\mathsf{and}\quad x\under \dm y \eq \dm(\neg x \vee y) .$$

By \Cref{GLBA}, we need only verify that $\m A$ satisfies $\neg(x\cdot y)\eq \neg(x\meet y)$, or equivalently, $\dm(x\cdot y)\eq \dm(x\meet y)$. Towards this, we will first show $\m A$ satisfies $(\star\star):\dm(x\under y)\eq x\under \dm y$. Note that we need only verify $x\under \dm y \leq \dm(x\under y)$. Let $x,y\in A$. Since $(x\under \dm y)\cdot x \leq \dm y$, we have
$$
\dm y \under y \leq [(x\under \dm y)\cdot x]\under y = (x\under \dm y) \under (x\under y),
$$
from which it follows that:
$$
\begin{array}{r c l l}
\dm 1 &=& \dm[\dm y \under y] &\mbox{By $(\star)$} \\
&\leq & \dm[(x\under \dm y) \under (x\under y)] \\
&\leq & (x\under \dm y) \under \dm(x\under y). \\
\end{array}
$$
Hence $x\under \dm y \leq \dm(x\under y)$ by residuation.

Now, since $\dm(x\meet y)\leq \dm(\dm x \meet \dm y)=\dm(xy)$, we need only verify $\dm(xy)\leq \dm(x\meet y)$, or equivalently, $1\leq \dm(xy)\to \dm(x\meet y)$. First, we observe
$$\begin{array}{r c l l}
 \dm 1 \under  1 &\leq & (x\vee y)\under 1 &\mbox{Since $x,y\leq\dm 1$}\\
&=&[x\under 1]\meet [y\under 1]\\
&\leq &  [x\under(y\under y)] \meet [y\under (x\under x)]\\
&=& [xy\under y]\meet[xy\under x]\\
&=& xy \under (x\meet y).
\end{array}$$
Thus we obtain
$$\begin{array}[b]{r c l l} 
1 &\leq& \dm[\dm1\under 1] & \mbox{By $(\star)$}\\
& \leq  &\dm[xy \under (x\meet y)] &\mbox{By the above}\\
& =& xy \under \dm(x\meet y) &\mbox{By $(\star\star)$}\\
 &=& \dm(xy) \under \dm(x\meet y) .
\end{array}
\qedhere
$$
\end{proof}

\begin{lemma}\label[lemma]{lem:integralGLBA}
For $\m A$ an integral \FLeA-algebra, the following are equivalent:
\begin{enumerate}
\item $\m A \in \GLBA{\FLei}$.
\item $\m A$ is \spc.
\item $\m A \models \neg(x\under y)\eq \neg(\neg x \vee y)$.
\end{enumerate}
\end{lemma}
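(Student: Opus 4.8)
The plan is to unpack statement~(1) and then establish the cycle (1)~$\Rightarrow$~(2)~$\Rightarrow$~(3)~$\Rightarrow$~(1). Throughout I will use the integral facts that $1$ is the top element, $\neg 0 = \dm 1 = 1$ and $\dm 0 = 0$, together with $y\le x\under y$, $x\under y\le 1$, and $ab\le a\meet b$ (all immediate from integrality and \Cref{RLfacts,dblr}). By \Cref{GLBA}(2), for an integral algebra condition~(1) is equivalent to the conjunction of \eqref{pc} with the Glivenko axiom $(\star)\colon 1\le \dm(\dm x\under x)$; using $\neg 0 = 1$ and residuation, $(\star)$ is in turn equivalent to $\neg(\dm x\under x)\le 0$.

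For (1)~$\Rightarrow$~(2) the crux is the auxiliary identity $(\star\star)\colon \dm(x\under y)\eq x\under \dm y$. Its proof is exactly the computation carried out inside \Cref{GLBAbool}, which uses only that $1=\dm 1$ is the top (integrality) and $(\star)$; I would reproduce it verbatim. Granting $(\star\star)$, from $x\cdot\neg x\le 0\le \dm y$ and residuation one obtains $\neg x\le x\under \dm y = \dm(x\under y)$, and applying $\neg$ (order-reversing, with $\neg\neg\neg=\neg$) yields $\neg(x\under y)\le \dm x$. Since also $\neg(x\under y)\le \neg y$, \eqref{pc} applied to $\neg x$ (i.e.\ $\neg x\meet \dm x\le 0$) gives $\neg x\meet \neg(x\under y)\le \neg x\meet \dm x\le 0$, which is \eqref{spc}.

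For (2)~$\Rightarrow$~(3), note $\neg(\neg x\vee y)\eq \dm x\meet \neg y$, so it suffices to prove $\neg(x\under y)\eq \dm x\meet \neg y$. For $\le$: $\neg(x\under y)\le \neg y$ is automatic, while $\neg(x\under y)\cdot\neg x\le \neg(x\under y)\meet \neg x\le 0$ by \eqref{spc} yields $\neg(x\under y)\le \dm x$. For $\ge$, I would show $(\dm x\meet \neg y)(x\under y)\le 0$: from $(x\under y)x\le y$ and $y\cdot\neg y\le 0$ one gets $\neg y\cdot(x\under y)\le \neg x$, so the product is bounded above by $\neg x\meet \dm x$, which is $\le 0$ by the instance of \eqref{spc} at $y\mapsto 0$ (namely $\neg x\meet \dm x\le 0$). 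Finally, (3)~$\Rightarrow$~(1): putting $y\mapsto x$ in~(3) and using $x\under x\eq 1$, $\neg 1\eq 0$ gives $\dm x\meet \neg x\eq 0$, whence \eqref{pc} follows from $x\le \dm x$; putting instead $x\mapsto \dm x$, $y\mapsto x$ gives $\neg(\dm x\under x)\eq \dm x\meet \neg x\le 0$, which is $(\star)$. By \Cref{GLBA}(2) these two facts, with integrality, place $\m A$ in $\GLBA{\FLei}$.

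I expect the only real obstacle to be the inequality $\neg(x\under y)\le \dm x$ used in (1)~$\Rightarrow$~(2): this is the step with genuine content, and it truly needs $(\star)$ (through $(\star\star)$) and not merely \eqref{pc}. Indeed, the three-element pseudo-complemented algebra shown just before \Cref{rem:spc-pc} satisfies $\neg(0\under\bot)=1\not\le 0=\dm 0$, so the inequality — and hence \eqref{spc} — can fail under \eqref{pc} alone. Every remaining step is a one- or two-line residuation argument or a direct substitution.
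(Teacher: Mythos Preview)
Your argument is correct; every step checks out (in particular your use of $0\le \neg z$ in the integral case is valid, since $0\cdot z\le 0$ gives $0\le z\under 0$). The route, however, differs from the paper's in two places.

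For (1)$\Rightarrow$(2), the paper simply invokes the alternative form of axiom~(b) in \Cref{GLBA}, namely $\neg(x\under y)\eq \neg(\neg x\vee y)$, which holds throughout $\GLBA{\FLe}$ and hence in $\GLBA{\FLei}$; this makes the implication a one-liner: $\neg x\meet\neg(x\under y)=\neg x\meet\neg(\neg x\vee y)\le \neg x\meet\dm x\le 0$. You instead re-establish the inequality $\neg(x\under y)\le \dm x$ from scratch via $(\star\star)$, which is longer but more self-contained and makes explicit that only $(\star)$ (not \eqref{pc}) is needed for that particular bound. For (3)$\Rightarrow$(1), the paper observes that (3) is already the alternative form of axiom~(b) and therefore reduces the task to verifying $\neg(x\cdot y)\eq \neg(x\meet y)$, which it does via $(x\meet y)\cdot\neg(xy)\le \neg(x\under\neg y)\cdot(x\under\neg y)\le 0$. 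Your approach---substituting $y\mapsto x$ and then $x\mapsto\dm x,\ y\mapsto x$ into (3) to read off \eqref{pc} and $(\star)$ directly---is arguably cleaner and avoids any computation with products. The treatment of (2)$\Rightarrow$(3) is essentially the same in both proofs, with only cosmetic differences in how the two inequalities are bounded.
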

\begin{proof}
(1) implies (2) is clear since in $\GLBA{\FLei}$ we have
$$\neg x \meet \neg (x\under y) \eq \neg x \meet \neg(\neg x \vee y)\leq \neg x \meet \neg\neg x \leq 0.  $$
For (2) implies (3), suppose $\m A$ is \spc and let $x,y\in A$. Note that $\neg(\neg x \vee y) = \dm x \meet \neg y$. On the one hand we have
$$ 
\begin{array}{rcll}
(x\under y)\cdot(\dm x \meet \neg y) &\leq &[(x\under y)\cdot\dm x]\meet \neg y\\
&\leq& [(\dm x \under \dm y) \cdot \dm x]\meet \neg y & \mbox{Since $x\under y \leq x\under \dm y = \dm x\under \dm y$}\\
&\leq  & \dm y \meet \neg y \\
&\leq& 0 &\mbox{By (\ref{spc})}.
\end{array}
$$
Hence $\dm x \meet \neg y \leq \neg (x\under y)$ by residuation. On the other hand, we have
$$
\begin{array}{r c l l}
(\neg x \vee y)\cdot\neg(x\under y) &=& [\neg x \cdot \neg(x\under y)] \vee [y \cdot \neg(x\under y)]\\
&\leq &  [\neg x \meet \neg(x\under y)] \vee [y \cdot \neg y] & \mbox{By integrality}\\
&\leq & [\neg x \meet \neg(x\under y)]  \vee 0 \\
&=& 0 &\mbox{By (\ref{spc})}.
\end{array}
$$
Hence $\neg (x\under y)\leq \neg(\neg x \vee y)$, and we are done.

Lastly, for (3) implies (1), by \Cref{GLBA}(2), it is enough to show that $\neg(x\meet y)\eq \neg(x\cdot y)$ holds in $\m A$. Note that the $\leq$-direction holds by integrality. Let $x,y\in A$. Note that $x\meet y \leq \dm x \meet \dm y = \neg(\neg x \vee \neg y)$, and $\neg(\neg x \vee \neg y) = \neg (x\under \neg y)$ by assumption. Hence
$$ 
(x\meet y)\cdot \neg(x y) \leq \neg (x\under \neg y) \cdot \neg(xy) = \neg (x\under \neg y) \cdot (x\under \neg y) \leq 0.
$$
Hence $\neg(x\cdot y)\leq \neg (x\meet y)$ by residuation and we are done.
\end{proof}

\begin{corollary}\label[corollary]{GLBAint}
$\GLBA{\FLei} = \FLei + \mbox{\em(\ref{spc})}$ and $\GLBA{\FLew} = \FLew + \mbox{\em(\ref{pc})}$.
\end{corollary}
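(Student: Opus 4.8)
The plan is to read both identities almost directly off the preceding lemmas, the only genuine work being the collapse of $\Spc$ and \eqref{pc} in the presence of weakening.

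For the first identity, recall that by definition $\GLBA{\FLei} = \GLBA{\FLe}\cap\FLei$, so its members are exactly the integral \FLeA-algebras lying in $\GLBA{\FLe}$ -- which is precisely what the clause ``$\m A\in\GLBA{\FLei}$'' abbreviates in \Cref{lem:integralGLBA}. The equivalence of clauses (1) and (2) of that lemma then asserts that, among integral \FLeA-algebras, membership in $\GLBA{\FLei}$ is equationally captured by $\Spc$. Hence $\GLBA{\FLei} = \FLei + \Spc$ with no further computation.

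For the second identity I would first reduce to the integral case. Since $\FLew\subseteq\FLei$, the chain $\GLBA{\FLew} = \GLBA{\FLe}\cap\FLew = (\GLBA{\FLe}\cap\FLei)\cap\FLew = \GLBA{\FLei}\cap\FLew$, together with the first identity, gives $\GLBA{\FLew} = (\FLei + \Spc)\cap\FLew = \FLew + \Spc$, where the last step uses that $\FLew$ already contains integrality. It therefore suffices to show that $\Spc$ and \eqref{pc} axiomatize the same subvariety relative to $\FLew$, i.e. that the two identities are equivalent over integral, $0$-bounded \FLeA-algebras.

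Both directions of this equivalence are already on hand. The forward direction $\Spc\Rightarrow\eqref{pc}$ holds in every \FLeA-algebra: substituting $0$ for $y$ in $\Spc$ and using $x\under 0 = \neg x$ yields $\neg x\meet\dm x\leq 0$, and since $x\leq\dm x$ this forces $x\meet\neg x\leq 0$. The reverse direction $\eqref{pc}\Rightarrow\Spc$ is exactly \Cref{rem:spc-pc}: in a $0$-bounded algebra one has $\neg x\leq x\under y$, hence $\neg(x\under y)\leq\dm x$ by order-reversal of $\neg$, and so $\neg x\meet\neg(x\under y)\leq\neg x\meet\dm x\leq 0$, the final inequality being \eqref{pc} applied to the term $\neg x$. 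Combining the two directions gives $\FLew + \Spc = \FLew + \eqref{pc}$, and the second identity follows. There is no substantive obstacle here beyond careful bookkeeping of the variety intersections and the observation that \eqref{pc} must be instantiated at $\neg x$ (rather than at $x$) in order to produce $\neg x\meet\dm x$.
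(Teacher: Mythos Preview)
Your proof is correct and follows essentially the same approach as the paper's: the first identity is read directly off the equivalence (1)$\Leftrightarrow$(2) of \Cref{lem:integralGLBA}, and the second is obtained by showing that \eqref{spc} and \eqref{pc} coincide over $\FLew$ via \Cref{rem:spc-pc}. You are simply more explicit than the paper about the variety-intersection bookkeeping and about the easy direction $\Spc\Rightarrow\eqref{pc}$ (which the paper dismisses with ``clearly\ldots as an instance'' just before \Cref{rem:spc-pc}); your observation that \eqref{pc} must be instantiated at $\neg x$ to close the argument is exactly right.
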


\begin{proof}
The first claim immediately follows from \Cref{lem:integralGLBA}, while the second claim follows from the additional observation made in \Cref{rem:spc-pc}, namely that $\FLe + (\mathsf{o})\models \mbox{\eqref{spc} $\mathsf{iff}$ \eqref{pc}}$. 
\end{proof}

\section{Connexive \FLeA-algebras}\label{sec: connflealg}

In this section, we lay the groundwork for studying connexivity in substructural logics through the algebraic lens provided by their algebraic semantics, i.e. \FLeA-algebras. We will first begin by considering the case of when the inherent implication $\under$ is connexive, and after a short discussion, devote our time to studying the operations $\cnxm$ and $\cnxp$ (noted in the introduction), and their generalizations. We will show that varieties of \FLeA-algebras in which these operations satisfy the (corresponding algebraic notions of) connexive laws are fundamentally linked to Glivenko varieties relative to Boolean algebras -- and therefore their corresponding substructural logics having the Glivenko property w.r.t. Classical Logic. In doing so, we generalize a number of results from \cite{Falepa}, for which much of this investigation was inspired. Furthermore, we provide an entire interval of logics that are \emph{bonafide} examples of connexive logics, which moreover are equivalently axiomatized by just any one of the connexive theses of Boethius (or even Aristotle in the case of weakening).

\subsection{Connexive principles in \FLeA-algebras}
In light of the Algebraization Theorem for substructural logics [cf., \cref{FLeLogics}], the algebraic analogue for the connexive laws become evident. 
Let $\cnx$ and $\rneg$ be some binary and unary connectives, respectively, definable in the language $\{\meet,\join,\cdot,\under,0,1 \}$. 
Then a substructural logic $\Lg$ has a connexive law (i.e., one of Aristotle's or Boethius' theses from \Cref{CnxPrinc}) as a theorem, in terms of $\cnx$ and $\rneg$, if and only if the corresponding identity in \Cref{AlgCnxLaws} is modeled by subvariety $\V(\Lg)$ of \FLeA-algebras.
\begin{table}[ht]
\centering
\begin{tabular}{c | l}
	\begin{tabular}{c}
	Equational \\ Aristotle's Theses
	\end{tabular} 
	& 
	\parbox{0.6\linewidth}{
		\begin{align*}
		1 \leq  \rneg(x \cnx \rneg x) \tag{AT}\label{AT}\\
		1 \leq  \rneg(\rneg x \cnx x) \tag{AT'}\label{AT'}
		\end{align*}		
		}
	\\
	\hline
	\begin{tabular}{c}
	Equational \\ Boethius' Theses
	\end{tabular} 
	&
	\parbox{0.6\linewidth}{
		\begin{align*}
		1 \leq (x \cnx y) \cnx \rneg(x \cnx \rneg y) \tag{BT}\label{BT}\\
		1 \leq (x \cnx \rneg y) \cnx \rneg(x \cnx  y) \tag{BT'}\label{BT'}
		\end{align*}		
		}
\end{tabular}
\caption{Order-algebraic connexive laws}\label{AlgCnxLaws}
\end{table}

Moreover, the connective $\cnx$ satisfies the principle of non-symmetry in $\Lg$ iff $\V(\Lg)$ has a member in which $\cnx$ is not a symmetric relation, i.e., there exists $\m A \in \V(\Lg)$ such that
\begin{equation}\tag{NS}\label{NS}
(\exists x,y\in A)[  x \cnx y \nleq y \cnx x \quad \mathsf{or}\quad y \cnx x \nleq x \cnx y]
\end{equation}

Dually, for a variety $\V\subseteq \FLe$, $\V$ satisfies an identity from \Cref{AlgCnxLaws} if and only if the corresponding connexive law is a theorem of $\Lg(\V)$. Similarly, if $\V$ has a member in which $\cnx$ is not symmetric, then the logic $\Lg(\V)$ satisfies the principle of non-symmetry for $\cnx$.

This leads us to the following definition, whose broader generality will be useful for what follows. Let $\m A = \langle A; \leq, \rneg, 1 \rangle$ be a pointed ordered-algebra with a unary operation, i.e., $\langle A, \leq\rangle$ is a preorder and $\langle A;\rneg,1\rangle$ is an algebra of type $(1,0)$. For a function $\cnx\colon A^2\to A$, we say $(\m A,\cnx)$ is {\em proto-connexive} if the identities in \Cref{AlgCnxLaws} hold in $\m A$, and we say $\m A$ is {\em connexive} if, furthermore, the condition \eqref{NS} holds in $\m A$, i.e., that $\cnx$ is not symmetric.

It is worth observing that our notion of proto-connexivity is very general. In fact, it does not mention that $\cnx$ and $\sim$ should be understood as a binary and a unary operation satisfying some \emph{minimal requirements} to be entitled as (the semantical counterpart of) an implication-like and a negation-like connective, respectively. Indeed, our notion is aimed at capturing the behavior of those algebras which may serve \emph{in principle} as the (possibly equivalent) algebraic semantics of a $1$-assertional logic which, satisfies under some respect, connexive theses. 

We are now ready to state the primary definitions for what follows. First, for an \FLeA-algebra $\m A$, by $\m A'$ we will denote the pointed ordered-algebra $\langle A;\leq, \neg,1\rangle$, where $\leq$ is lattice order inherent to $\m A$ and $\neg$ is defined as usual via $\neg x := x\to 0$. In this way, the we specialize the above concepts to the framework of substructural logics and \FLeA-algebras: 
\begin{definition}\label[definition]{def:protoconnexive}
Let $\m A \in \FLe$ and $\cnx\colon A^2\to A$ a binary function. We say $(\m A, \cnx)$ is \emph{proto-connexive} if $(\m A',\cnx)$ is proto-connexive, and similarly, $(\m A, \cnx)$ is \emph{connexive} if $(\m A',\cnx)$ is connexive. %
If $\cnx$ is term-definable in the language of \FLeA-algebras, then for a variety $\V\subseteq \FLe$ we say $(\V,\cnx)$ is \emph{proto-connexive} if for every member of $\m A\in \V$, $(\m A, \cnx)$ is proto-connexive, and we say $(\V,\cnx)$ is \emph{connexive} if furthermore it contains member $\m A$ such that $(\m A, \cnx)$ is connexive [i.e., $\V\nmodels x\cnx y\eq y\cnx x$]. If $\Lg$ is a substructural logic, we say $(\Lg,\cnx)$ is (\emph{proto-}) \emph{connexive} if $(\V(\Lg),\cnx)$ is.
\end{definition}
\noindent Whenever no danger of confusion will be impending, we will say that a binary operation $\cnx$ over $\m{X}$ [taken to be an algebra, variety, or logic], is (proto-) connexive if $(\m X,\cnx)$ is.

The next proposition introduces some identities that will be useful for the development of our arguments.
\begin{proposition}\label[proposition]{P1P2}
Let $\m A $ be an \FLeA-algebra and let $\cnx\colon A^2\to A$. If $(\m A,\cnx)$ satisfies the identities 
\begin{align*}
1 &\leq  {x \cnx \neg\neg x}\tag{P1}\label{P1}\\
\neg\neg(x \cnx y) &\eq \neg(x \cnx \neg y \tag{P2})\label{P2}
\end{align*}
then $(\m A,\cnx)$ is proto-connexive.
\end{proposition}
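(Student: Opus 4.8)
The plan is to derive each of the four order-algebraic connexive laws \eqref{AT}, \eqref{AT'}, \eqref{BT}, \eqref{BT'} (taking $\rneg := \neg$) directly from \eqref{P1} and \eqref{P2}, using only the standard properties of the nucleus $\dm = \neg\neg$ recorded in \Cref{dblr} together with the identity $\neg x \eq \neg\neg\neg x$. In particular I would use freely that $\dm$ is increasing and monotone, that $\neg$ is order-reversing, and that $\neg\dm w \eq \neg w$ and $\dm\neg w \eq \neg w$ for every term $w$ (both being instances of $\neg\neg\neg w \eq \neg w$). The central reformulation is to read \eqref{P2} as the family of identities $\dm(x\cnx y)\eq \neg(x\cnx \neg y)$, i.e.\ as saying that the double negation of $x\cnx y$ is computed by negating the second argument.

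Boethius' first thesis is then almost immediate. Instantiating \eqref{P1} at $a := x\cnx y$ gives $1\leq (x\cnx y)\cnx \dm(x\cnx y)$, and \eqref{P2} rewrites $\dm(x\cnx y)$ as $\neg(x\cnx \neg y)$; substituting yields \eqref{BT}. The same strategy for \eqref{BT'} begins from \eqref{P1} at $a := x\cnx \neg y$, giving $1\leq (x\cnx\neg y)\cnx \dm(x\cnx \neg y)$, but here \eqref{P2} only rewrites $\dm(x\cnx\neg y)$ as $\neg(x\cnx \dm y)$, not the desired $\neg(x\cnx y)$. This mismatch is the \emph{main obstacle}. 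I would close it with the auxiliary identity $\neg(x\cnx \dm y)\eq \neg(x\cnx y)$, proved by checking the two sides have equal double negation: applying \eqref{P2} twice gives $\dm(x\cnx \dm y)\eq\neg(x\cnx\neg y)\eq \dm(x\cnx y)$ (using $\neg\dm y \eq \neg y$), whence negating and invoking $\neg\dm w\eq\neg w$ returns the claim. Feeding this back gives $\dm(x\cnx\neg y)\eq \neg(x\cnx y)$, and substitution into the chosen instance of \eqref{P1} yields \eqref{BT'}.

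For Aristotle's theses I would rewrite the negated term as $\dm$ of something forced above $1$ by \eqref{P1}. For \eqref{AT}, applying \eqref{P2} at $y := \dm x$ and using $\neg\dm x \eq \neg x$ gives $\neg(x\cnx \neg x)\eq \dm(x\cnx \dm x)$; since \eqref{P1} gives $1\leq x\cnx \dm x$ and $\dm$ is increasing, the right-hand side is $\geq 1$, which is \eqref{AT}. For \eqref{AT'}, I would first use $\neg w \eq \neg \dm w$ to write $\neg(\neg x\cnx x)\eq \neg\dm(\neg x\cnx x)$, then apply \eqref{P2} (with first argument $\neg x$ and second argument $x$) to obtain $\dm(\neg x\cnx x)\eq \neg(\neg x\cnx \neg x)$, so that $\neg(\neg x\cnx x)\eq \dm(\neg x\cnx \neg x)$; finally \eqref{P1} instantiated at $\neg x$, together with $\dm\neg x\eq \neg x$, gives $1\leq \neg x\cnx \neg x$, and since $\dm$ is increasing this delivers \eqref{AT'}.

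The only genuinely delicate point is \eqref{BT'}: the naive \eqref{P1}/\eqref{P2} computation produces a double negation on the second argument of the inner implication, and recognizing and proving the compensating identity $\neg(x\cnx \dm y)\eq\neg(x\cnx y)$ is what makes the argument go through. Everything else is a short rewriting using \eqref{P1}, \eqref{P2}, and the elementary nucleus identities from \Cref{dblr}.
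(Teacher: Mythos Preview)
Your proof is correct and follows essentially the same approach as the paper. The one organizational difference is that the paper observes at the outset that \eqref{P2} is equivalent (by applying $\neg$ to both sides and using $\neg\neg\neg w\eq\neg w$) to $\neg(x\cnx y)\eq \dm(x\cnx\neg y)$; with this reformulation in hand, what you call the ``main obstacle'' for \eqref{BT'} disappears, since instantiating \eqref{P1} at $x\cnx\neg y$ gives $1\leq (x\cnx\neg y)\cnx\dm(x\cnx\neg y)\eq(x\cnx\neg y)\cnx\neg(x\cnx y)$ directly, and \eqref{AT'} is similarly immediate. Your auxiliary identity $\neg(x\cnx\dm y)\eq\neg(x\cnx y)$ is correct and amounts to re-deriving this reformulation in a specific instance, so the two routes are really the same argument packaged differently.
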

\begin{proof}
Note that \eqref{P2} is equivalent to $\neg(x \cnx y) \eq \neg\neg(x \cnx \neg y)$ in the context of \FLeA-algberas by  \Cref{dblr}. Using this observation, by \eqref{P1} and \Cref{dblr}, we obtain \eqref{AT} via
$1\leq x\cnx \neg\neg x \leq \neg\neg(x\cnx \neg\neg x) \eq \neg(x\cnx \neg x).$
Similarly, we obtain \eqref{AT'} via $1\leq \neg x \cnx \neg\neg(\neg x)\leq \neg\neg(\neg x\cnx \neg x)\eq \neg(\neg x\cnx x).$ \eqref{BT} is derived by a single application of \eqref{P1} followed by \eqref{P2}:
$$1\leq (x\cnx y)\cnx \neg\neg(x\cnx y) \eq (x\cnx y)\cnx \neg(x\cnx \neg y), $$
and \eqref{BT'} is calculated similarly using the equivalent reformation of \eqref{P2}.
\end{proof}
\subsection{Connexive implications in $\FLe$-algebras}\label[section]{sec:cnxResid} 
The most natural question to address first, in the context of substructural logics, is to ask what happens when we assume that the residual operation $\under$ satisfies the connexive laws. The ramifications of such an assumption are almost immediate.

\begin{proposition}\label{thm: connexiveimplinreslat}
The following are equivalent for $\m A=\langle A,\meet,\join,\cdot,\under,0,1\rangle\in \FLe$:
\begin{enumerate}
\item $(\m A,\under)$ is proto-connexive.
\item $(\m A,\under)\models\mbox{\eqref{AT}}$.
\item $0$ is the greatest element in $\m A$.
\end{enumerate}
\end{proposition}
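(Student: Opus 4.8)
The plan is to prove the cycle $(1)\Rightarrow(2)\Rightarrow(3)\Rightarrow(1)$. The implication $(1)\Rightarrow(2)$ is immediate, since by definition proto-connexivity of $(\m A,\under)$ includes \eqref{AT}. For $(3)\Rightarrow(1)$ I would argue that if $0$ is the greatest element then $\neg x = x\under 0$ equals the greatest element $0$ for every $x$: indeed $w\cdot x\leq 0$ for all $w$, so $x\under 0$ is the top $0$. Consequently $\neg$ is constantly $0$, and the outermost connective of each of the four theses then reduces to an expression of the form $\neg(\cdots)=0\geq 1$; hence all of \eqref{AT}, \eqref{AT'}, \eqref{BT}, \eqref{BT'} hold and $(\m A,\under)$ is proto-connexive. (One could also derive $(2)\Rightarrow(1)$ directly from the inequality $(x\under y)(x\under\neg y)\leq\neg(x^2)$, but this is not needed for the cycle.)

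The whole content therefore lies in $(2)\Rightarrow(3)$, which I expect to be the main obstacle. First I would rewrite the hypothesis: by currying (\Cref{RLfacts}(5)) we have $x\under\neg x = x\under(x\under 0)=x^2\under 0=\neg(x^2)$, so $(\m A,\under)\models\eqref{AT}$ is precisely the identity $\neg(x^2)\leq 0$. A quick preliminary consequence is $\dm 0 = 0$: substituting $x:=\neg a$ in \eqref{AT} gives $\neg a\under\dm a\leq 0$, and since $a\leq\dm a$ with $\under$ order-preserving on the right we get $\neg a\under a\leq 0$ for all $a$; taking $a:=0$ reads $\dm 0=\neg 0\under 0\leq 0$, which together with $0\leq\dm 0$ forces $\dm 0=0$.

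The key difficulty is that passing from $\neg(x^2)\leq 0$ to $x\leq 0$ seems to genuinely require involutivity, so I would first establish the claim for the involutive reduct $\Glv A=\m A_\dm$ and then lift it. Because $\dm 0=0$, the negation and the zero of $\Glv A$ agree with those of $\m A$, and one checks that $\Glv A$ again satisfies \eqref{AT}: its ``square'' of $z$ is $\dm(z^2)$ and its negation of that element is $\neg\dm(z^2)=\neg(z^2)\leq 0$ (using $\neg\dm w=\neg w$ from \Cref{dblr}). Now, working inside the involutive algebra $\Glv A$, I apply the order-reversing involution to $\neg(z^2)\leq 0$ to obtain $\neg 0\leq\neg\neg(z^2)=z^2$; since $\neg 0\geq 1$ always, instantiating $z:=1$ yields $\neg 0 = 1$ and hence $1\leq z^2$ for every $z$. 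For any $z\leq 1$ this gives $1\leq z^2\leq z\leq 1$ (monotonicity of $\cdot$), so $z=1$; thus nothing lies strictly below $1$, whence $z\meet 1=1$, i.e.\ $1\leq z$, for all $z$. So $1$ is the least element of $\Glv A$, and applying the involution once more ($\neg z\leq\neg 1=0$, with $\neg$ surjective) shows $0$ is its greatest element. Finally I lift: every $x\in A$ satisfies $x\leq\dm x\in\Glv A$ and $\dm x\leq 0$, so $x\leq 0$, proving $0$ is the greatest element of $\m A$.

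The only subtle bookkeeping I anticipate is verifying that $\Glv A$ inherits \eqref{AT} (which leans on $\neg\dm w=\neg w$ and $\dm 0=0$) and the squeeze $1\leq z^2\leq z$, which uses order-preservation of multiplication. The conceptual heart is the single observation that, under involutivity, \eqref{AT} upgrades to $1\leq z^2$, collapsing the order below $1$ to a point and, dually, the order above $0$ to a point.
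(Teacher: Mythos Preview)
Your argument is correct, though the implication $(2)\Rightarrow(3)$ takes a genuinely different path from the paper's. The paper argues directly and avoids any auxiliary structure: since $y\meet 1\leq 1$, one has $0\leq (y\meet 1)\to 0=\neg(y\meet 1)$ and also $\neg(y\meet 1)\leq (y\meet 1)\to\neg(y\meet 1)\leq 0$ by \eqref{AT}, whence $\neg(y\meet 1)=0$ for every $y$; then $x\leq\dm x\leq \neg(\neg x\meet 1)=0$ finishes in one line. You instead first establish $\dm 0=0$, pass to the involutive algebra $\Glv{A}$, use involutivity to upgrade \eqref{AT} to $\dm 1\leq z\cdot_\dm z$, deduce that $\dm 1$ is least (hence, dually, $0$ is greatest) in $\Glv{A}$, and lift back via $x\leq\dm x\leq 0$. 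The paper's $x\meet 1$ trick is shorter; your route is more structural, makes the role of involutivity transparent, and as a side effect derives \eqref{AT'} from \eqref{AT} along the way (your inequality $\neg a\to a\leq 0$).

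One notational point worth tightening: once you are ``working inside $\Glv{A}$'', the symbols $1$ and $z^2$ must consistently mean $1_{\Glv{A}}=\dm 1=\neg 0$ and $z\cdot_\dm z=\dm(z\cdot z)$ respectively. Under that reading your ``$\neg\neg(z^2)=z^2$'' is justified by involutivity of $\Glv{A}$, and ``$\neg 0=1$'' is the always-true identity $\neg 0=\dm 1$, making the ``instantiating $z:=1$'' step harmless but redundant. Read instead with the ambient $1_A$ and the $A$-product, neither line would be valid as written, since $\neg\neg(z\cdot z)=\dm(z\cdot z)$ need not equal $z\cdot z$ and one only obtains $\neg 0\leq\dm 1$, not $\neg 0\leq 1$.
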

\begin{proof}
(1) implies (2) holds by definition. Suppose (2), then $\m A\models x\under\neg x\leq 0$ by residuation. Now, it is easily verified that $\FLe\models x\leq (y\land 1)\under x$ using \cref{RLfacts}, from which it follows $0\leq (x\land 1)\under 0$ and $(x\land 1)\under 0\leq (x\land 1)\under((x\land 1)\under 0)\leq 0$. Hence $\m A\models \neg(x\land 1)\eq 0$. So for any $x\in A$, we find $x\leq \neg\neg x \leq \neg(\neg x \meet 1) \leq 0$ via \cref{dblr}, establishing (3). Lastly, suppose (3) holds, and note that this entails $\m A\models \neg x \eq 0$, as $0\leq x\to 0$ by residuation of $x\cdot 0\leq 0$, from which it immediately follows $(\m A,\under)\models \mbox{\eqref{P2}}$. Since $1\leq x\under \neg\neg x$ is satisfied in any \FLeA-algebra, the desired result follows from \Cref{P1P2}.
\end{proof}

It is worth noting that an \FLeA-algebra $\m A$ having $0$ as its largest element is equivalently stated via $\m A\models \neg x\eq \neg y$, as the forward direction is shown in the proof above and the reverse direction follows the valuation $x\mapsto \neg z$ and $y\mapsto 1$. Therefore, the variety of \FLeA-algebras satisfying $x\leq 0$ is equal to the variety $\mathbf{G}_{\FLe}(\V_\emptyset)$, the Glivenko variety of $\FLe$ relative to the trivial variety $\V_\emptyset$. In fact:
\begin{proposition}\label[proposition]{GLtriv}
The variety $(\mathbf{G}_{\FLe}(\V_\emptyset),\under)$ is connexive.
\end{proposition}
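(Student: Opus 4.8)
The plan is to unfold \Cref{def:protoconnexive}: the pair $(\mathbf{G}_{\FLe}(\V_\emptyset),\under)$ is connexive exactly when it is \emph{proto-connexive} --- every member validates \eqref{AT}, \eqref{AT'}, \eqref{BT}, and \eqref{BT'} --- and, in addition, at least one member falsifies $x\under y\eq y\under x$, i.e. exhibits \eqref{NS}. I would treat these two demands separately, using throughout the identification recorded immediately above the statement: $\mathbf{G}_{\FLe}(\V_\emptyset)$ is precisely the class of \FLeA-algebras in which $0$ is the greatest element.

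Proto-connexivity comes for free. \Cref{thm: connexiveimplinreslat} tells us that $(\m A,\under)$ is proto-connexive iff $0$ is the greatest element of $\m A$; since every member of $\mathbf{G}_{\FLe}(\V_\emptyset)$ has this property, every member is proto-connexive, and therefore so is $(\mathbf{G}_{\FLe}(\V_\emptyset),\under)$ by \Cref{def:protoconnexive}.

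The real task is to furnish a single member in which $\under$ fails to be symmetric. The guiding observation is that $\under$ is computed wholly inside the $\CRL$-reduct and is completely indifferent to where the constant $0$ is located; hence it suffices to take any commutative residuated lattice with a greatest element $\top$ whose residual is non-symmetric, and then set $0\coloneq\top$ to place the resulting \FLeA-algebra inside $\mathbf{G}_{\FLe}(\V_\emptyset)$. The minimal witness is the two-element chain $\{a<b\}$ with $\cdot=\meet$ (so the monoid unit is the top $b$), viewed as an \FLeA-algebra via $0\coloneq b$: there $a\under b=b$ while $b\under a=a$, so $a\under b\nleq b\under a$ and \eqref{NS} holds. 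Should an example with $0\neq 1$ be desirable, the same recipe applied to a three-element non-integral chain delivers one.

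The argument is brief, and the only step deserving attention is the last: one must confirm that relocating $0$ to $\top$ neither alters the residual (it cannot, as $\under$ is determined by $\cdot$ and the order alone) nor breaks membership (the chosen algebra plainly satisfies $x\leq 0$). With proto-connexivity from \Cref{thm: connexiveimplinreslat} and non-symmetry from the witness, $(\mathbf{G}_{\FLe}(\V_\emptyset),\under)$ is connexive.
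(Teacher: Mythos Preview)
Your proof is correct and follows the paper's approach for proto-connexivity (both simply invoke \Cref{thm: connexiveimplinreslat}). For \eqref{NS} the two arguments diverge slightly in presentation: you go directly to a concrete witness---the two-element chain with $\cdot=\meet$ and $0$ placed at the top---whereas the paper first proves the stronger intermediate claim that \emph{any} member of $\mathbf{G}_{\FLe}(\V_\emptyset)$ in which $\under$ is symmetric must be trivial (from $1\under 0 = 0\under 1$ one gets $0\leq 1$, hence $1$ is the top, hence $x=1\under x=x\under 1=1$ for all $x$), and only then observes that non-trivial members exist by expanding an arbitrary non-trivial integral CRL with $0\coloneq 1$. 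Your witness is precisely the smallest instance of the paper's general construction, so the underlying idea is the same; your route is more economical, while the paper's yields the bonus fact that non-symmetry holds in \emph{every} non-trivial algebra of the variety, not merely in one.
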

\begin{proof}
From \Cref{thm: connexiveimplinreslat} we have that $(\mathbf{G}_{\FLe}(\V_\emptyset),\under)$ is proto-connexive. Thus it suffices to show that it has a member in which $\under$ is not symmetric. Indeed, suppose $\m A\in \mathbf{G}_{\FLe}(\V_\emptyset)$ satisfies $x\to y \eq y\to x$. Then, in particular, $1\under 0 \leq 0\under 1$, and so by residuation $0=1\cdot 0 = 1\cdot(1\under 0)\leq 1$, i.e., $0=1$ and $\m A$ is a trivial algebra.  Now, since expanding any non-trivial integral commutative residuated lattice by a new constant $0$ such that $0:=1$ yields a non-trivial algebra in $\mathbf{G}_{\FLe}(\mathsf{V}_{\emptyset})$, it follows that $\mathbf{G}_{\FLe}(\mathsf{V}_{\emptyset}) \neq \mathsf{V}_{\emptyset}$ and therefore $(\mathbf{G}_{\FLe}(\V_\emptyset),\under)\models\mbox{\eqref{NS}}$. 
\end{proof}

However, the logic $\Lg=\Lg(\mathbf{G}_{\FLe}(\V_\emptyset))$ is far from behaving as a desirable connexive logic. In fact, although $\Lg$ has, for example, Aristotle's theses among its theorems, one has also that $\vdash_\Lg\varphi\to\neg\varphi$, for any formula $\varphi$, since the identity $1\leq x\under\neg x$ holds in $\mathbf{G}_{\FLe}(\mathsf{V}_{\emptyset})$. The latter fact implies that $\varphi\vdash_{\Lg}\neg\varphi$, for any formula $\varphi$. Therefore, $\neg$ does not satisfy the minimal requirement for a connective to be entitled as a negation (see e.g. \cite{ArAvZa, Marcos2005, WaOd2016}).

In the light of the above discussion, it seems reasonable to wonder if there exists other \emph{term-definable} binary operations on \FLeA-algebras (or expansion thereof) behaving like a connexive implication with much more appealing features. To this aim, for the remainder of this paper, we will study the generalized definition of the connexive implication defined in \cite{Falepa} as follows. Given a map $\pos\colon A\to A$ on $\m A$, we define the two following operations:
\begin{align*}
x \cnxmt y &\coloneq   (x\under y) \meet (y\under \ppos{x})\\
x \cnxpt y &\coloneq   (x\under y) \cdot (y\under \ppos{x}) 
\end{align*}

We will pay special attention to the case where $\delta=\dm$, as the operations $\cnxpd$ and $\cnxmd$ are term-definable in the language of $\FLe$. Moreover, we will omit the superscript when no confusion can arise; i.e., $\cnxm\coloneq  \cnxmd$ and $\cnxp\coloneq  \cnxpd$. 

It is worth noticing that, for $\cnx\in \{\cnxp,\cnxm\}$, the identity $1\leq x\cnx x$ holds in all \FLeA-algebras as $1\leq x\under x\leq x\under \dm x$. Now, while $\cnx$ is neither order-preserving in its right argument nor order-reversing in its left argument, generally speaking, it does satisfy a weakened version of these properties:

\begin{proposition}\label[proposition]{CnxOrdProp}
The following quasi-identities hold in $\FLe$ for $\cnx\in \{\cnxp,\cnxm \}$:
\begin{align*}
x\leq y ~\mathsf{and}~ \dm x \eq \dm y &\quad\mathsf{implies}\quad z\cnx x \leq z\cnx y\\
x\leq y ~\mathsf{and}~ \dm x \eq \dm y &\quad\mathsf{implies}\quad y\cnx z \leq x\cnx z
\end{align*}
\end{proposition}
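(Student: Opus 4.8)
The plan is to verify both quasi-identities directly from the definitions $x\cnxm y = (x\under y)\meet(y\under\dm x)$ and $x\cnxp y = (x\under y)\cdot(y\under\dm x)$, exploiting the monotonicity of $\cdot$, $\meet$, and $\under$ recorded in \Cref{RLfacts} together with the rewriting $a\under\dm b \eq \dm a\under\dm b$ furnished by \Cref{dblr}(6). Throughout I write $\ast$ for the operation joining the two factors, so that $\ast = \meet$ in the case $\cnx = \cnxm$ and $\ast = \cdot$ in the case $\cnx = \cnxp$; in either case $\ast$ is order-preserving in both of its coordinates. The guiding observation is that each of $z\cnx x$ and $z\cnx y$ (resp. $y\cnx z$ and $x\cnx z$) is a $\ast$-product of two factors, exactly one of which varies monotonically in the direction we want, while the other is pinned down by the hypothesis $\dm x\eq\dm y$.

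First I would establish monotonicity in the right argument, namely $z\cnx x\leq z\cnx y$ under $x\leq y$ and $\dm x\eq\dm y$. Comparing $z\cnx x = (z\under x)\ast(x\under\dm z)$ with $z\cnx y = (z\under y)\ast(y\under\dm z)$, the first factors satisfy $z\under x\leq z\under y$ because $\under$ is order-preserving in its right coordinate. For the second factors I would invoke \Cref{dblr}(6) to rewrite $x\under\dm z\eq\dm x\under\dm z$ and $y\under\dm z\eq\dm y\under\dm z$; since $\dm x\eq\dm y$, these are equal, so the second factor does not change. Order-preservation of $\ast$ then gives $z\cnx x\leq z\cnx y$ for both choices of $\cnx$.

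For monotonicity in the left argument, $y\cnx z\leq x\cnx z$, I would compare $y\cnx z = (y\under z)\ast(z\under\dm y)$ with $x\cnx z = (x\under z)\ast(z\under\dm x)$. This time the first factors satisfy $y\under z\leq x\under z$, since $\under$ is order-reversing in its left coordinate and $x\leq y$, while the second factors coincide outright because $\dm x\eq\dm y$ yields $z\under\dm y\eq z\under\dm x$ (here the $\dm$-term sits in the right coordinate, so no appeal to \Cref{dblr}(6) is needed). Once more order-preservation of $\ast$ delivers $y\cnx z\leq x\cnx z$.

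I do not anticipate a genuine obstacle. The only point deserving emphasis is that plain monotonicity of $\under$ would drive the $\dm$-bearing factor in the \emph{wrong} direction in the right-argument case; it is precisely the hypothesis $\dm x\eq\dm y$, transported through \Cref{dblr}(6) into an equality of the relevant residuals, that neutralizes that factor and makes the two-factor comparison go through uniformly for $\cnxm$ and $\cnxp$.
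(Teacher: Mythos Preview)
Your proposal is correct and follows essentially the same route as the paper: you fix $\ast\in\{\meet,\cdot\}$, use \Cref{RLfacts} to handle the factor that moves in the desired direction, and use \Cref{dblr}(6) together with $\dm x=\dm y$ to freeze the other factor, then conclude by order-preservation of $\ast$. The paper's proof is the same argument, written slightly more compactly.
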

\begin{proof}
Let $\m A\in \FLe$ and let $x,y,z\in A$. Suppose $x\leq y$ and $\dm x = \dm y$. The former implies $y\under z\leq x\under z$ and $z\under x\leq z\under y$, by \cref{RLfacts}(2,3), while the latter implies  $x\under \dm z = y\under \dm z$, by \cref{dblr}(6). Since $*\in\{\cdot,\meet\}$ is order-preserving in both arguments, we obtain:
$$\begin{array}[b]{l}
z\cnx x
=(z\under x)*(x\under \dm z)=
(z\under x)*(y\under \dm z)\leq (z\under y)*(\dm y\under \dm z) = z\cnx y; \\
y\cnx z
=(y\under z)*(z\under \dm y)=(y\under z)*(z\under \dm x) \leq (x\under z)*(z\under \dm x) =x\cnx z.
\end{array} \qedhere$$
\end{proof}

In terms of their relationship to (equational) connexive principles, we now prove the following observation which will be important throughout this paper.

\begin{lemma}\label[lemma]{BTequiv0}
Let $\m A$ be an \FLeA-algebra and $\cnx\in\{ \cnxp,\cnxm\}$. Then $(\m A, \cnx)$ satisfies {\em(\ref{P1})}. Consequently, if $(\m A,\cnx)\models\mbox{\em(\ref{P2})}$ then $(\m A,\cnx)$ is proto-connexive.
\end{lemma}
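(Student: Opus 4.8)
The plan is to verify \eqref{P1} by a direct computation from the definitions and then to invoke \Cref{P1P2} for the final assertion. Recall that, since here $\pos=\dm$, the relevant instances are $x\cnxm \dm x = (x\under \dm x)\meet(\dm x\under \dm x)$ and $x\cnxp \dm x = (x\under \dm x)\cdot(\dm x\under \dm x)$, so in both cases \eqref{P1} reduces to establishing $1\leq x\cnx \dm x$.

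First I would show that each of the two constituents appearing in these expressions already lies above $1$. For the left constituent, $x\leq \dm x$ holds by \Cref{dblr}, whence $x\cdot 1 = x\leq \dm x$ and so $1\leq x\under \dm x$ by residuation. For the right constituent, $\dm x\cdot 1 = \dm x\leq \dm x$ yields $1\leq \dm x\under \dm x$. Given these two inequalities, the case of $\cnxm$ is immediate: $1$ lies below both meetands and hence below their meet. For $\cnxp$ I would use that $\cdot$ is order-preserving in both coordinates (\Cref{RLfacts}), so that $1 = 1\cdot 1\leq (x\under \dm x)\cdot(\dm x\under \dm x) = x\cnxp \dm x$. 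This settles \eqref{P1} for both operations.

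For the ``consequently'' clause no further work is needed: having just verified that $(\m A,\cnx)$ satisfies \eqref{P1} for each $\cnx\in\{\cnxp,\cnxm\}$, the additional hypothesis $(\m A,\cnx)\models\eqref{P2}$ places us exactly under the assumptions of \Cref{P1P2}, which then yields that $(\m A,\cnx)$ is proto-connexive. The argument presents no genuine obstacle, being only a short residuation-and-monotonicity verification; the single point demanding attention is the bookkeeping in the substitution $y\mapsto \dm x$, where the ``twisted'' term $y\under \pos x$ collapses to $\dm x\under \dm x$ precisely because both $y$ and $\pos x$ then equal $\dm x$.
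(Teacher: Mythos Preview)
Your argument is correct. You verify \eqref{P1} by a direct computation: substituting $y\mapsto\dm x$ into the definition of $\cnx\in\{\cnxp,\cnxm\}$, both factors $(x\under\dm x)$ and $(\dm x\under\dm x)$ are $\geq 1$, and then closure of $\{a:1\leq a\}$ under $\meet$ and $\cdot$ finishes the job. The consequence is exactly \Cref{P1P2}.

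The paper takes a slightly different route: it first establishes the general order-preservation principle (\Cref{CnxOrdProp}) that $x\leq y$ together with $\dm x=\dm y$ implies $z\cnx x\leq z\cnx y$, and then instantiates this with $z:=x$ and $y:=\dm x$ (using $x\leq\dm x$ and $\dm x=\dm\dm x$) to pass from the already-noted identity $1\leq x\cnx x$ to $1\leq x\cnx\dm x$. Your computation is essentially what lies inside the proof of \Cref{CnxOrdProp} specialized to this instance, so the underlying arithmetic is the same; you simply bypass the intermediate proposition. What the paper's organization buys is that \Cref{CnxOrdProp} is reused elsewhere (e.g., in the proof of \Cref{charlanfcnxone}), so isolating it avoids repetition. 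What your approach buys is self-containment: the reader sees \eqref{P1} verified in two lines without having to chase a reference.
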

\begin{proof} The first claim follows from \Cref{CnxOrdProp}, setting $y,z:=x$ and in the first quasi-identity, as $x\leq \dm x$, $\dm x=\dm\dm x$, and $1\leq x\cnx x$ always hold. 
Hence $(\m A, \cnx)\models(\mbox{{\ref{P1}}})$. The second claim follows by \Cref{P1P2}.
\end{proof}

\noindent In fact, even more can be said 
for the connective $\cnxm$, as we see below.
\begin{lemma}\label[lemma]{charlanfcnxone}
For any \FLeA-algebra $\m A$, the following hold:
\begin{enumerate}
\item For all $x,y\in A$, $1\leq x\cnxm y$ iff $x\leq y$ and $\dm x = \dm y$.
\item $\m A \models \dm(x\cnxm \dm y) \eq x\cnxm \dm y \eq \dm x\cnxm \dm y$.
\end{enumerate}
\end{lemma}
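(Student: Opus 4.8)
The statement has two parts, and I would prove part (1) first, then derive part (2) as a consequence. For part (1), recall that $x\cnxm y := (x\under y)\meet(y\under\dm x)$. The inequality $1\leq x\cnxm y$ is equivalent, since $\meet$ is the lattice meet, to the conjunction $1\leq x\under y$ \emph{and} $1\leq y\under\dm x$. By residuation (and \Cref{RLfacts}(4), i.e.\ $1\under x\eq x$), $1\leq x\under y$ holds iff $x\leq y$, and $1\leq y\under\dm x$ holds iff $y\leq\dm x$. So the whole condition is equivalent to $x\leq y$ and $y\leq\dm x$ \emph{holding simultaneously}. The task is then to show this pair of inequalities is equivalent to the asserted condition ``$x\leq y$ and $\dm x=\dm y$''.

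For the forward direction, assume $x\leq y$ and $y\leq\dm x$. Applying the monotone, idempotent nucleus $\dm$ (using \Cref{dblr}(2,3)) to $y\leq\dm x$ gives $\dm y\leq\dm\dm x=\dm x$; and $x\leq y$ gives $\dm x\leq\dm y$. Together these yield $\dm x=\dm y$. Conversely, if $x\leq y$ and $\dm x=\dm y$, then $y\leq\dm y=\dm x$ (using $y\leq\dm y$ from \Cref{dblr}(1)), so $y\leq\dm x$, recovering the second inequality. This closes part (1). I expect no real obstacle here; it is a clean unwinding of residuation plus the basic nucleus properties already catalogued in \Cref{dblr}.

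For part (2), I would prove the two equalities $x\cnxm\dm y\eq\dm x\cnxm\dm y$ and $\dm(x\cnxm\dm y)\eq x\cnxm\dm y$ separately. For the first, expand both sides: $x\cnxm\dm y=(x\under\dm y)\meet(\dm y\under\dm x)$ while $\dm x\cnxm\dm y=(\dm x\under\dm y)\meet(\dm y\under\dm\dm x)$. By \Cref{dblr}(6), $x\under\dm y\eq\dm x\under\dm y$, and by \Cref{dblr}(3), $\dm\dm x\eq\dm x$, so the second meetands also coincide; hence the two terms are equal. For the second equality, it suffices (since $\dm$ is \expansive, giving $\geq$) to show $\dm(x\cnxm\dm y)\leq x\cnxm\dm y$. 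Here I would use that each conjunct is already $\dm$-closed: $x\under\dm y\eq\dm(x\under\dm y)$ by \Cref{dblr}(6), and $\dm y\under\dm x\eq\dm(\dm y\under\dm x)$ again by \Cref{dblr}(6). Then invoking \Cref{dblr}(5), the meet of two $\dm$-closed elements is itself $\dm$-closed, i.e.\ $\dm(a\meet b)\eq a\meet b$ when $\dm a=a$ and $\dm b=b$; applying this to $a=x\under\dm y$ and $b=\dm y\under\dm x$ gives $\dm(x\cnxm\dm y)\eq x\cnxm\dm y$.

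The only mild subtlety is making the last step precise: \Cref{dblr}(5) is stated as $\dm x\meet\dm y\eq\dm(\dm x\meet\dm y)$, so I would apply it with its variables instantiated at the already-closed terms $x\under\dm y$ and $\dm y\under\dm x$ (each of the form $\dm(\,\cdot\,)$), which is exactly the hypothesis that both meetands are in the image of $\dm$. Thus the main ``work'' is simply recognizing that every component of $x\cnxm\dm y$ lands in $\Glv{A}=\m A_\dm$, after which closure under $\meet$ and the identity $x\under\dm y\eq\dm x\under\dm y$ do everything. I anticipate no genuine obstacle; the result is essentially a bookkeeping consequence of the nucleus identities in \Cref{dblr}.
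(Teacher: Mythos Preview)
Your proposal is correct and essentially follows the paper's approach: both unwind the definition of $\cnxm$ together with residuation for part~(1), and both reduce part~(2) to \Cref{dblr}(5,6). The only minor difference is that for the reverse direction of~(1) you argue directly (from $\dm x=\dm y$ deduce $y\leq\dm y=\dm x$), whereas the paper instead invokes \Cref{CnxOrdProp} with $z:=x$ to get $1\leq x\cnxm x\leq x\cnxm y$; your route is slightly more self-contained but amounts to the same computation.
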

\begin{proof}
For the first claim, towards the forward direction, $1\leq x\under y$ and $1\leq y\under \dm x$ immediately follows by assumption. Hence $x\leq y\leq \dm x$, and furthermore this give $\dm x \leq \dm y \leq \dm \dm x = \dm x$. The reverse direction follows from \cref{CnxOrdProp}, setting $z:= x$ in the first quasi-identity, since $1\leq x\cnx x$ always holds.
The second claim immediately follows by \cref{dblr}(5,6).
\end{proof}

\noindent Furthermore, we obtain a (stronger) converse of \Cref{BTequiv0} for $\cnxm$.
\begin{lemma}\label[lemma]{BTequiv1}
Let $\m A$ be an \FLeA-algebra. Then the following are equivalent:
\begin{enumerate}
\item $(\m A, \cnxm)$ is proto-connexive.
\item $(\m A,\cnxm)\models \mbox{\em(\ref{BT})}$. 
\item $(\m A,\cnxm)\models \mbox{\em(\ref{BT'})}$. 
\item $(\m A,\cnxm)\models \mbox{\em(\ref{P2})}$. 
\item $(\m A,\cnxm)$ models the following identity, 
\begin{equation}\label{P3}\tag{P3}
\neg(x\cnx y) \eq x\cnx \neg y.
\end{equation} 
\end{enumerate}
\end{lemma}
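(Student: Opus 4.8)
The plan is to run two short implication chains that both funnel into \eqref{P2}, using \Cref{charlanfcnxone}(1) as the main engine. Recall that \Cref{charlanfcnxone}(1) reduces $1\leq u\cnxm v$ to the conjunction ``$u\leq v$ and $\dm u\eq \dm v$'', and recall two standing regularity facts: any term of the form $\neg z$ is $\dm$-fixed (since $\neg z\eq\neg\neg\neg z$), and any term of the form $x\cnxm \neg y$ is $\dm$-fixed by \Cref{charlanfcnxone}(2) (its second argument being the regular element $\neg y$). Two of the implications are essentially free: (4)$\Rightarrow$(1) is exactly \Cref{BTequiv0}, since $(\m A,\cnxm)$ always satisfies \eqref{P1}; and (1)$\Rightarrow$(2) together with (1)$\Rightarrow$(3) hold by the very definition of proto-connexivity.

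For (2)$\Leftrightarrow$(4) I would unfold \eqref{BT}, namely $1\leq (x\cnxm y)\cnxm \neg(x\cnxm \neg y)$, through \Cref{charlanfcnxone}(1) with $u\coloneq x\cnxm y$ and $v\coloneq \neg(x\cnxm \neg y)$. Since $v$ is $\dm$-fixed, the clause $\dm u\eq \dm v$ is precisely \eqref{P2}, while the accompanying inequality $u\leq v$ comes for free once \eqref{P2} holds (as $u\leq \dm u\eq v$); hence \eqref{BT} and \eqref{P2} are equivalent. The same template handles (3)$\Leftrightarrow$(5): unfolding \eqref{BT'} with $u\coloneq x\cnxm \neg y$ and $v\coloneq \neg(x\cnxm y)$, both $u$ and $v$ are now $\dm$-fixed, so the clause $\dm u\eq \dm v$ collapses to $u\eq v$, which is exactly \eqref{P3}, and the inequality $u\leq v$ is then automatic.

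At this point $\{(1),(2),(4)\}$ are mutually equivalent and $\{(3),(5)\}$ are equivalent to each other and implied by (1); the only missing link is to feed the \eqref{BT'}-branch back into the main cycle, so the natural target is (5)$\Rightarrow$(4), i.e. $\eqref{P3}\Rightarrow\eqref{P2}$, which is the step I expect to be the real obstacle. Here I would first substitute $\neg y$ for $y$ in \eqref{P3} to obtain $\neg(x\cnxm \neg y)\eq x\cnxm \dm y$, so that \eqref{P2} reduces to the claim $\dm(x\cnxm y)\eq x\cnxm \dm y$. The inequality $\dm(x\cnxm y)\leq x\cnxm \dm y$ holds unconditionally: rewriting $x\cnxm \dm y\eq (x\under \dm y)\meet (y\under \dm x)$ via \Cref{dblr}(6) gives $x\cnxm y\leq x\cnxm \dm y$, and the right-hand side is $\dm$-fixed. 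For the reverse inequality I would invoke \eqref{P3} together with the adjunction $a\leq \neg b \iff b\leq \neg a$ (a consequence of residuation and commutativity): the goal $x\cnxm \dm y\eq \neg(x\cnxm \neg y)\leq \dm(x\cnxm y)$ transposes to $\neg(x\cnxm y)\leq \dm(x\cnxm \neg y)\eq x\cnxm \neg y$, which is immediate from \eqref{P3}. Combining the two inequalities yields \eqref{P2}, closing the chain $(1)\Leftrightarrow(2)\Leftrightarrow(3)\Leftrightarrow(4)\Leftrightarrow(5)$.
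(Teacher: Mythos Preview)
Your proof is correct and follows essentially the same strategy as the paper: both arguments unfold \eqref{BT} and \eqref{BT'} through \Cref{charlanfcnxone}(1) to extract \eqref{P2}/\eqref{P3}, and both invoke \Cref{BTequiv0} for (4)$\Rightarrow$(1). The one noteworthy difference is how the link between (4) and (5) is handled. The paper observes that (4)$\Leftrightarrow$(5) is immediate from \Cref{charlanfcnxone}(2): negating both sides of \eqref{P3} yields \eqref{P2} at once, and conversely negating \eqref{P2} and using that $x\cnxm\neg y$ is $\dm$-fixed gives \eqref{P3}. You instead prove (3)$\Leftrightarrow$(5) directly and then establish (5)$\Rightarrow$(4) by reducing \eqref{P2} to the identity $\dm(x\cnxm y)\eq x\cnxm\dm y$ and verifying both inequalities via an adjunction argument. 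This is valid but longer than necessary: your (5)$\Rightarrow$(4) follows in a single line by applying $\neg$ to both sides of \eqref{P3}.
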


\begin{proof}
For simplicity, set $\cnx \coloneq \cnxm$. Clearly (1) implies both (2) and (3), by definition, (4) and (5) are equivalent by \cref{charlanfcnxone}(2), and (4) implies (1) by \Cref{BTequiv0}. It is sufficient then to show that (2) and (3) each imply (4). Indeed, if $(\m A,\cnxm)$ satisfies (\ref{BT}) then for any $x,y\in A$ we have $1\leq (x\cnx y) \cnx \neg(x\cnx \neg y) $. Hence by \Cref{charlanfcnxone}(1), $\neg(x\cnx y)\approx \dm(x\cnx\neg y)$. 
A similar calculation shows (3) implies (4). 
\end{proof}
\begin{remark}
It is worth noticing that \eqref{P3} is a well known and motivated falsification condition for (connexive) implication (see e.g. \cite{Wansing2005,wansingstanford}). For example, \cite{Wansing2007} motivates \eqref{P3} by the introduction of negated syntactic types in Categorial Grammar (see e.g. \cite{BU}) and the consequent need of providing falsity conditions for the functor-type forming directional implications. Wansing's connexive logic $\mathsf{C}$ is algebraizable w.r.t. a slight modification of the equivalent algebraic semantics of paraconsistent Nelson's logic $\mathsf{N4}$ satisfying, among other axioms, \eqref{P3} (see \cite{FaOd}).
\end{remark}
\begin{example} The above lemma does not generally hold in $\FLe$ for the connective $\cnxp$. Indeed, consider $\Z(n)$ from Example~\ref{ex:Z(n)}, for some arbitrary $n\in \Z$. It is easy to verify that $(\Z(n),\cnxp)$ satisfies \eqref{BT} since $x\cnxp y = 0 = 1_{\Z(n)}$ for all $x,y\in \Z$. However, taking $n\neq 0$, we see 
$\neg (x\cnxp \neg y)=n\neq 0 = x\cnxp y = \dm (x\cnxp y).$
Moreover, fixing $n<0$ and $y=x$ above we have $\neg(x \cnxp\neg x) = n \not\geq 0=1_{\Z(n)}$, so $(\Z(n),\cnxp)$ does not satisfy \eqref{AT}. We do, however, obtain a restricted converse with $\cnxp$ relative to \FLeA-algebras satisfying $x\leq \dm 1$ [see \cref{meetlikearrows}].
\end{example}

Lastly, we prove the following proposition, whose generality will be useful in the next section.
\begin{proposition}\label[proposition]{AllCnxInTriv}
Suppose $\m A$ is an \FLeA-algebra in which $0$ is the greatest element. Then for any map $\pos: A^2 \to A$, the connective $\cnxpt$ is proto-connexive on $\m A$. For $\pos = \dm$, then the connective $\cnxmd$ is proto-connexive in $\m A$.
\end{proposition}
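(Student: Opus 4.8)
The plan is to route everything through the sufficient criterion of \Cref{P1P2}: it is enough to check the two schemata \eqref{P1} and \eqref{P2} for each of the two connectives, once the arithmetic forced by the hypothesis has been extracted. The first move is to observe that ``$0$ greatest'' collapses negation: because $0$ is the top element, $x\cdot w\leq 0$ holds for every $w$, so residuation gives $\neg x = x\under 0 = 0$ for all $x$ -- exactly the computation isolated in the proof of \Cref{thm: connexiveimplinreslat}. Iterating, $\dm x = \neg\neg x = \neg 0 = 0$, so on such an $\m A$ both $\neg$ and $\dm$ are the constant map onto $0$, and in particular $z\under 0 = \neg z = 0$ for every $z$.

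Granting this, \eqref{P2} comes for free, for either connective and any $\pos$: both $\dm(x\cnx y)$ and $\neg(x\cnx\neg y)$ equal $0$, so \eqref{P2} is an identity. The same collapse makes \eqref{AT} and \eqref{AT'} hold directly, their outer $\neg$ turning them into $1\leq 0$, which is valid because $0$ is greatest. Hence by \Cref{P1P2} the whole statement reduces to verifying \eqref{P1}, that is $1\leq x\cnx\dm x = x\cnx 0$, for the two connectives.

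For the meet operation $\cnxmd$ (the case $\pos = \dm$) this is immediate: $x\cnxmd 0 = (x\under 0)\meet(0\under\dm x) = 0\meet 0 = 0\geq 1$, so $(\m A,\cnxmd)$ is proto-connexive. For the product operation $\cnxpt$ with an arbitrary $\pos$, \eqref{P1} unfolds to
$$1\leq x\cnxpt 0 = (x\under 0)\cdot(0\under\ppos{x}) = 0\cdot(0\under\ppos{x}),$$
whose first factor is the top element $0$, which is idempotent since $1\leq 0$ and the monotonicity of $\cdot$ give $0 = 1\cdot 0\leq 0\cdot 0\leq 0$.

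The step I expect to be the main obstacle is precisely this last inequality, $1\leq 0\cdot(0\under\ppos{x})$, under a genuinely arbitrary $\pos$. The lever that drives it is once more the collapse $\dm x = 0$: when the role of $\pos$ is played by $\dm$ the factor becomes $0\under\ppos{x} = 0\under 0 = 0$, whence $0\cdot 0 = 0\geq 1$ and $\cnxpd$ is settled. For the general factor $0\under\ppos{x}$ I would work through \Cref{dblr}(6) together with the idempotency of $0$ to push the product up to the top element; I expect this computation -- rather than any of the trivial theses -- to carry the entire weight of the argument, the remaining laws following mechanically from \Cref{P1P2}.
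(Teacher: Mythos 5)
Your positive computations are correct, and where they are sound they coincide with the paper's own argument up to packaging: once $0$ is the top element, residuation gives $\neg z\eq 0$ and hence $\dm z\eq 0$, so \eqref{P2} and both of Aristotle's theses hold outright for \emph{any} candidate operation, and your verifications $x\cnxmd 0\eq 0\meet 0\eq 0\geq 1$ and, for $\pos=\dm$, $x\cnxpd 0\eq 0\cdot(0\under 0)\eq 0\cdot 0\eq 0\geq 1$ (using $1\leq 0$) settle the second claim and the $\pos=\dm$ instance of the first. The paper does the same computations directly, deriving $x\cnx\neg y\eq 0$ and reading off the theses rather than routing through \Cref{P1P2}; that difference is cosmetic.

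The step you flag as the main obstacle, however, is not a gap waiting for a cleverer computation: it is false, and \Cref{dblr}(6) cannot rescue it, because that lemma constrains $\dm$ while $\pos$ is an arbitrary set-map (the ``$A^2\to A$'' in the statement should read $A\to A$) subject to no axioms linking it to $\neg$ or to the order, so nothing bounds $0\under\ppos{x}$ from below. Concretely, re-point a nontrivial integral CRL by $0:=1$ --- the very construction used in the proof of \Cref{GLtriv}; then $0$ is the greatest element but $0\cdot z=z$, so $0$ is \emph{not} absorbing. In the three-element G\"odel chain $\bot<m<1$ with $0:=1$ and $\ppos{x}:=\bot$ for all $x$, one computes $(x\cnxpt y)\cnxpt\neg(x\cnxpt\neg y)=(x\cnxpt y)\cnxpt 0=0\cdot(0\under\bot)=1\cdot\bot=\bot\not\geq 1$, refuting \eqref{BT}; even the increasing choice $\pos=\mathrm{id}$ fails there (take $x=1$, $y=m$, giving $x\cnxpt y=m$ and $m\cnxpt 0=m\not\geq 1$). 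So the arbitrary-$\pos$ half of the proposition is false as stated, and --- this is worth knowing --- the paper's own proof commits exactly the error you were careful to avoid: it asserts ``$x\cdot 0\eq 0$'', of which only $x\cdot 0\leq 0$ follows from $0$ being the top. What survives is precisely what you proved: the claim for those $\pos$ with $\ppos{x}\eq 0$, in particular $\pos=\dm$, since the collapse turns $\dm$ into the constant map onto $0$. Your instinct that this single inequality ``carries the entire weight'' was exactly right; it is the point at which both the general statement and the paper's proof of it break down.
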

\begin{proof}
The first claim follows from the fact that $\m A$ satisfies $\neg x \eq 0$, hence also $x\cdot 0 \eq 0$ and $x\under \neg y \eq 0$, from which it follows $x\cnxpt \neg y  \eq 0$. Consequently, Aristotle's and Boethius' theses hold for $\cnxpt$. For the second claim, $x\cnxmd \neg y \eq 0$ holds in $\m A$, so $\cnxmd$ is proto-connexive.
\end{proof}

\subsection{Characterizing connexivity for $\cnxm$ and $\cnxp$}\label{sec:charwhencnxmdiscon}
We will now investigate the varieties of \FLeA-algebras satisfying the connexive laws for both $\cnxm$ and $\cnxp$, and their relationship to the Glivenko variety relative to Boolean Algebras, which is, at least implicitly, hinted at in \cite{Falepa}. We provide a characterization for $\cnxm$ in the general case, and a restricted one for $\cnxp$. 
To that aim, let us start with the following technical lemmas.

\begin{lemma}\label[lemma]{WhenDMisInt}
Let $\m A$ be an \FLeA-algebra such that $\dm\m A$ is integral. Then:
\begin{enumerate}
\item $\m A$ satisfies $x\under y \eq x\cnxm (\dm x \meet y)$
\item $\m A$ satisfies $x\cnxp\neg y \leq x\cnxm \neg y$ and $\dm x \cnxp x \leq \dm(\dm x \cnxm x)$.
\item Let $\cnx\in \{\cnxp,\cnxm\}$. Then the following hold:
\subitem{(a)} If $(\m A,\cnx)\models \mbox{\eqref{AT}}$ then $\m A$ is pseudo-complemented.
\subitem{(b)} If $(\m A,\cnx)\models\mbox{\em(\ref{P2})}$ then $\m A$ satisfies $1\leq \dm(\dm x \under x)$.
\end{enumerate}
\end{lemma}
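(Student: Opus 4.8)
The plan is to prove the four parts of \Cref{WhenDMisInt} in sequence, using throughout the standing assumption that $\dm\m A$ is integral --- equivalently (since $\dm 1$ is the top of $\dm\m A$), that $\dm 1$ is the greatest element of $\m A$, so that $x\leq \dm 1$ and hence $\dm x \leq \dm 1$ for all $x$. I expect the computations to be driven almost entirely by \Cref{dblr} (especially item (6), $\dm x \under \dm y \eq x\under \dm y$) and the residuation facts in \Cref{RLfacts}.

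For part (1), I would unfold $x\cnxm(\dm x\meet y) = (x\under(\dm x \meet y))\meet((\dm x \meet y)\under \dm x)$. The second conjunct is $\geq 1$ since $\dm x \meet y \leq \dm x$ gives top (as $\dm 1$ bounds everything, $u\leq v$ forces $u\under v \eq \dm 1 \geq$ the relevant value; more carefully $(\dm x\meet y)\under \dm x = \dm 1$ because the left side lies below $\dm x$ and $\dm 1$ is top). The first conjunct is $x\under(\dm x\meet y) = (x\under\dm x)\meet(x\under y)$ by \Cref{RLfacts}(2), and $x\under \dm x \eq \dm 1$ again since $x\leq \dm x$. So the whole meet collapses to $x\under y$, giving the identity. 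For part (2), the inequality $x\cnxp\neg y \leq x\cnxm\neg y$ is just the general fact that $ab\leq a\meet b$ whenever $a,b\leq 1$; here both residual factors $x\under\neg y$ and $\neg y\under \dm x$ lie below $\dm 1$, but I must be slightly careful --- the clean inequality $uv\leq u\meet v$ needs $u,v\leq 1$, which holds because $\cnxm,\cnxp$ factors here are bounded by $\dm 1$ and... I would instead argue directly: $ab \leq a\cdot \dm 1$ and $ab\leq \dm 1\cdot b$, using monotonicity and $a,b\leq \dm 1 = 1$ only if $\dm 1 = 1$, which is NOT assumed. The cleaner route is $x\cnxp\neg y = (x\under\neg y)(\neg y\under\dm x) \leq (x\under\neg y)\meet(\neg y\under\dm x) = x\cnxm\neg y$, which holds because each factor is at most $\dm 1$ and multiplication by an element $\leq \dm 1$ is below meet --- this is the step I would verify most carefully, likely via $\neg y\under\dm x \leq \dm 1$ and $x\cdot \dm 1$-type bounds, or simply by noting $a\cdot b\leq a\meet b$ follows from $b\leq \dm 1$ plus integrality \emph{of $\dm\m A$} applied to the $\dm$-closed elements. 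The second inequality $\dm x\cnxp x\leq \dm(\dm x\cnxm x)$ should follow by applying $\dm$ to the product-versus-meet comparison and using $\dm(ab)\geq \dm a\cdot \dm b$ together with the factors being $\dm$-fixed.

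For part (3a), assume $(\m A,\cnx)\models\eqref{AT}$, i.e. $1\leq \neg(x\cnx\neg x)$. By \Cref{charlanfcnxone}(1) (for $\cnxm$) or direct inspection, $x\cnx\neg x$ has the shape whose negation being $\geq 1$ forces $\neg\neg(x\cnx\neg x)\cdot$-bounds collapsing; concretely I would show $1\leq\neg(x\cnx\neg x)$ yields $x\cnx\neg x \leq 0$ by residuation, then extract $x\meet\neg x\leq 0$ by bounding $x\meet\neg x$ below the relevant residual factor of $x\cnx\neg x$ (namely $x\under\neg x$ and $\neg x\under\dm x$), giving pseudo-complementedness. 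For part (3b), assuming $(\m A,\cnx)\models\eqref{P2}$ --- $\neg\neg(x\cnx y)\eq\neg(x\cnx\neg y)$ --- I would specialize the variables (the natural choice is to feed in $y\mapsto$ something making $x\cnx\neg y$ computable via part (1), e.g. using $x\under y \eq x\cnxm(\dm x\meet y)$) to manufacture the inequality $1\leq \dm(\dm x\under x)$. The main obstacle I anticipate is precisely this last specialization in (3b): finding the right substitution that, combined with \eqref{P2} and the integrality of $\dm\m A$, produces exactly the Glivenko axiom $1\leq\dm(\dm x\under x)$ --- the other parts are essentially bookkeeping with \Cref{dblr}, but (3b) requires identifying a nonobvious instance of the hypothesis.
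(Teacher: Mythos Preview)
Your plan for part (1) matches the paper exactly. For the first inequality in (2), the clean reason $x\cnxp\neg y\leq x\cnxm\neg y$ holds is that both factors $x\under\neg y$ and $\neg y\under\dm x$ are $\dm$-closed (via \Cref{dblr}(6) and $\neg y=\dm\neg y$), and integrality of $\dm\m A$ yields $uv\leq\dm u\meet\dm v$ for all $u,v$; you were circling this but not landing on it cleanly. For the second inequality in (2), your ``apply $\dm$ to the product-versus-meet comparison'' idea stumbles because $\dm x\under x$ need not be $\dm$-closed; the paper instead notes $x\under\dm x=\dm 1$, whence $\dm x\cnxp x=(\dm x\under x)\cdot\dm 1\leq\dm(\dm x\under x)=\dm(\dm x\cnxm x)$ directly.

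There are two genuine gaps. In (3a), your plan to bound $x\meet\neg x$ below each residual factor of $x\cnx\neg x$ works for $\cnxm$ (the meet of the factors) but fails for $\cnxp$: lying below each factor does not place you below their \emph{product}. The paper's remedy is to instantiate \eqref{AT} not at $x$ but at $c:=x\meet\neg x$. Since $c^2\leq x\cdot\neg x\leq 0$ we get $c\leq\neg c$, and because $\neg c$ is $\dm$-closed this forces $c\under\neg c=\dm 1$. Then $c\cnxp\neg c=\dm 1\cdot(\neg c\under\dm c)=\neg c\under\dm c\geq\dm c\geq c$, so $c\leq c\cnxp\neg c\leq c\cnx\neg c\leq 0$ for either choice of $\cnx$.

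In (3b), the specialization you are searching for is not via part (1). The paper first uses \Cref{P1P2}---since $\cnx$ satisfies \eqref{P1} by \Cref{BTequiv0}---to obtain \eqref{AT'}, then instantiates \eqref{AT'} at $\neg x$ to get $1\leq\neg(\dm x\cnx\neg x)$, and only \emph{then} applies \eqref{P2} to rewrite this as $1\leq\dm(\dm x\cnx x)$. Finally, part (2) gives $\dm(\dm x\cnx x)\leq\dm(\dm x\cnxm x)=\dm(\dm x\under x)$, the last equality holding because $x\under\dm x=\dm 1$ is the top element.
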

\begin{proof} Let $*\in \{\meet,\cdot \}$, and note that $\m A \models \dm x \eq \dm x * \dm 1$ since $\dm 1$ is the greatest element (for the $\meet$ case), and $\FLe\models \dm 1\cdot \dm x \eq \dm x$ (for the $\cdot$ case)\footnote{$\dm x = \dm x \cdot 1 \leq \dm x\cdot \dm 1 \leq \dm(\dm x \cdot \dm 1)=\dm(x\cdot 1)=\dm x$.}. Furthermore, note that $\dm 1$ being the greatest element implies $\dm(\dm x\cnxm x) \eq \dm(\dm x \under x)$.

For (1), note that $\dm \m A$ being integral entails $\dm 1 = x\under \dm y$ whenever $x\leq \dm y$. So for $x,y\in A$, we have
$$\begin{array}{r c l l}
x\cnxm(\dm x\meet y) &=& [x\under (\dm x \meet y)] \meet [(\dm x \meet y)\under \dm x]\\
&=& [x\under (\dm x \meet y)] \meet \dm 1 \\
&=& (x\under \dm x) \meet(x\under y) \\
&=& \dm 1 \meet(x\under y)  \\
&=& x\under y .
\end{array} $$

For (2), first observe that $\dm\m A$ being integral entails that $xy\leq \dm x\meet \dm y$. Hence, from this observation and \cref{dblr}(6), 
$$x\cnxp \neg y=(x\cnx \neg y)\cdot (\neg y\under \dm x)=(x\cnx \neg y)\meet (\neg y\under \dm x)=x\cnxm \neg y.$$
 For the other identity, observe that $\dm x\cnxp x \leq (\dm x \under x)\cdot \dm 1\leq \dm(\dm x \under x)=\dm(\dm x \cnxm x)$.


For (3), let $\cnx\in \{\cnxp,\cnxm\}$. Towards (a), set $c\coloneq  x\meet\neg x$. By order-preservation and residuation, it is easy to see that $c\leq \neg c$. %
Since $\dm \m A$ is integral, we have, on the one hand, $\dm 1 = c\under \neg c $, and on the other hand $\dm c = \dm 1 \under \dm c \leq \neg c \under \dm c $. Now, if $(\m A,\cnx)$ satisfies \eqref{AT}, then we have
$$
\begin{array}[b]{r c ll}
0
&\geq& c\cnx \neg c &\mbox{By \eqref{AT} and residuation}\\
&\geq& c\cnxp \neg c & \mbox{By (2)}\\
&=& (c\under \neg c) \cdot (\neg c \under \dm c)\\
&=& \neg c\under \dm c &\mbox{Since $\dm 1 =c\under \neg c $}\\
&\geq& \dm c \\
&\geq &c \coloneq   x\meet\neg x.
\end{array}
 $$ 

For (b), by \Cref{P1P2} we that $(\m A,\cnx)\models \mbox{(\ref{AT'})}$. So for $x\in A$, we have
$$\begin{array}[b]{rcll}
1
&\leq& \neg(\dm x\cnx \neg  x) &\mbox{By {(\ref{AT'})}}\\
&=&\dm(\dm x\cnx   x) &\mbox{By (\ref{P2})} \\
&\leq& \dm(\dm x \cnxm x) & \mbox{By (2)}\\
&=& \dm(\dm x \under x).
\end{array}
\qedhere$$
\end{proof}
\begin{example}
The converse of Lemma \ref{WhenDMisInt}(1) holds for ${\cnx}= \cnxm$. Indeed, it is easily checked that 
$\FLe\models x\cnxm (\dm x\meet y)\leq x\under \dm x,$ 
So, if $\m A$ satisfies $x\under y \eq x\cnxm(\dm x \meet y)$, by taking $x=1$ we obtain
$y  = 1\under y \leq 1 \under \dm 1 = \dm 1.$
\end{example}

\begin{lemma}\label[lemma]{FLeCnxInGLBA}
Let $\m A$ be an \FLeA-algebra. If $(\m A,\cnxm)\models  \mbox{\em(\ref{P2})}$ then $\m A \in \GLBA{\FLe}$.
\end{lemma}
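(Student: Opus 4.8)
The plan is to verify directly the three conditions of \Cref{GLBAbool}(2): that $\m A$ is pseudo-complemented, that $\m A\models 1\leq\dm(\dm x\under x)$, and that $\dm\m A$ is integral. Two of these are essentially already in hand. Indeed, by \Cref{BTequiv1} the hypothesis \eqref{P2} is equivalent to $(\m A,\cnxm)$ being proto-connexive and to \eqref{P3}, so in particular \eqref{AT}, \eqref{AT'} and \eqref{P2} all hold in $\m A$. Consequently, \emph{once we know $\dm\m A$ is integral}, \Cref{WhenDMisInt}(3) applies: its part (a) promotes \eqref{AT} to pseudo-complementedness, and its part (b) promotes \eqref{P2} to $1\leq\dm(\dm x\under x)$. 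Thus the entire argument reduces to a single nontrivial point, namely that $\dm 1$ is the greatest element of $\m A$. This is the main obstacle, and it is exactly the property failing in counter-models such as $\Z(n)$ (cf. \Cref{ex:Z(n)}), whose double-negation nucleus is not integral.

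To establish integrality of $\dm\m A$ I would extract a usable equation from \eqref{P3} by specializing $y:=1$. Writing out both sides gives $x\cnxm 1=(x\under 1)\meet\dm x$ and $x\cnxm\neg 1=x\cnxm 0=\neg x\meet\neg(\neg x\cdot 0)$, so \eqref{P3} becomes $\neg[(x\under 1)\meet\dm x]=\neg x\meet\neg(\neg x\cdot 0)$. The key is a squeeze: the left-hand side is $\geq\neg\dm x=\neg x$ (its argument lies below $\dm x$), while the right-hand side is $\leq\neg x$; the forced equality collapses both to $\neg x$, whence $\neg x\leq\neg(\neg x\cdot 0)$ and therefore $x\cnxm 0=\neg x$. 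Applying $\neg$ and then invoking \eqref{P3} a second time yields $\dm x=\neg(x\cnxm 0)=x\cnxm\neg 0=x\cnxm\dm 1$, i.e.\ $\m A\models x\cnxm\dm 1\eq\dm x$.

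With this identity available, \Cref{charlanfcnxone}(1) applied to $1\leq\dm x=x\cnxm\dm 1$ shows that $1\leq\dm x$ forces $x\leq\dm 1$. The closing move is to feed in $x:=z\vee 1$ for an arbitrary $z$: since $z\vee 1\geq 1$ we get $\dm(z\vee 1)\geq\dm 1\geq 1$, so the implication gives $z\vee 1\leq\dm 1$ and hence $z\leq\dm 1$. As $z$ is arbitrary, $\dm 1$ is the top and $\dm\m A$ is integral. Combining this with the reduction of the first paragraph, \Cref{WhenDMisInt}(3) and \Cref{GLBAbool} then yield $\m A\in\GLBA{\FLe}$. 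I expect the squeeze to be the genuinely delicate step---guessing that $y:=1$ is the right specialization and noticing that it collapses $x\cnxm 0$ to $\neg x$; once the identity $x\cnxm\dm 1\eq\dm x$ is secured, the passage to integrality via $z\vee 1$ and the final assembly are both short.
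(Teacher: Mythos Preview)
Your proposal is correct and follows essentially the same architecture as the paper's proof: reduce to showing $\dm\m A$ is integral via \Cref{GLBAbool}(2) and \Cref{WhenDMisInt}(3), extract this from \eqref{P3} by connecting $x\cnxm\dm 1$ with $\neg(x\cnxm 0)$, and finish with the substitution $x\mapsto z\vee 1$. The only difference is in the middle step: the paper chains inequalities directly, obtaining $x\leq \dm x =\neg(x\to 0)\leq \neg(x\cnxm 0)= x\cnxm\dm 1\leq x\to\dm 1$ and hence $x^2\leq\dm 1$, whereas you take a slightly longer detour---first squeezing $x\cnxm 0$ to the \emph{equality} $\neg x$, then deducing $x\cnxm\dm 1=\dm x$, and finally invoking \Cref{charlanfcnxone}(1) to convert $1\leq\dm x$ into $x\leq\dm 1$. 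Your route yields the sharper identities $x\cnxm 0\eq\neg x$ and $x\cnxm\dm 1\eq\dm x$ along the way, but for the lemma at hand the paper's inequality chain is shorter and avoids the squeeze entirely.
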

\begin{proof} 
Note that $(\m A,\cnxm)$ satisfies \eqref{AT} by \Cref{BTequiv1}. In light of \Cref{GLBAbool}(2) and \Cref{WhenDMisInt}(3), it suffices to verify that $\dm\m A$ is integral. Indeed, for $x\in A$, 
$$ 
\begin{array}{r c l l}
x\under \dm1&\geq& (x\under \dm1) \meet \dm x \\
&=& (x\under \dm1) \meet (\dm 1 \under \dm x) \\
&=& x \cnxm \dm 1\\
&=&\neg (x\cnxm 0)& \mbox{By (\ref{P3}) since $\dm 1 = \neg 0$}\\
&\geq & \neg (x\under 0) & \mbox{By def. of $\cnxm$}\\
&\geq& x &\mbox{By \cref{dblr}(1)},
\end{array}
$$
and so $x\cdot x\leq \dm 1$ by residuation. It easily follows (e.g. by the substitution $x\mapsto y\join 1$) that $\dm 1$ is the greatest element of $\m A$, i.e. $\dm \m A$ is integral. 
\end{proof}

\begin{remark}
The lemma above does not generally hold for the connective $\cnxp$. Indeed, consider $\Z(n)$ from Example~\ref{ex:Z(n)} for $n=0$. Clearly $\dm \Z(0)$ is not integral, but it is easily checked that $(\Z(0),\cnxp)$ satisfies (\ref{P2}).
\end{remark}

\begin{lemma}\label[lemma]{GLBAisCnx}
If $\m A \in \GLBA{\FLe}$ then $(\m A,\cnxm)$ is proto-connexive.
\end{lemma}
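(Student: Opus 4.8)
The plan is to show that $\m A \in \GLBA{\FLe}$ implies $(\m A,\cnxm)\models\mbox{\eqref{P2}}$, and then invoke \Cref{BTequiv0} (since $\cnxm$ always satisfies \eqref{P1}) to conclude proto-connexivity. By \Cref{BTequiv1}, verifying \eqref{P2} is equivalent to verifying the more transparent identity \eqref{P3}, namely $\neg(x\cnxm y)\eq x\cnxm\neg y$; I would work with whichever of these is computationally cleaner. The membership $\m A\in\GLBA{\FLe}$ gives me, via \Cref{GLBAbool}, that $\dm\m A$ is Boolean and that $\m A$ satisfies $1\leq\dm(\dm x\under x)$; in particular $\dm\m A$ is integral, so $\dm 1$ is the greatest element, and all the identities established in \Cref{WhenDMisInt} are available. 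Crucially, the proof of \Cref{GLBAbool} already extracted the auxiliary identity $(\star\star):\dm(x\under y)\eq x\under\dm y$, which I expect to be the workhorse here.

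First I would unfold the definition $x\cnxm y = (x\under y)\meet(y\under\dm x)$ and compute $x\cnxm\neg y$ and $\neg(x\cnxm y)$ separately. For the left-hand side, $x\cnxm\neg y=(x\under\neg y)\meet(\neg y\under\dm x)$. For the right-hand side, $\neg(x\cnxm y)=\neg[(x\under y)\meet(y\under\dm x)]$; here I would use $\neg(a\meet b)$-type manipulations together with involutivity in $\dm\m A$ and the Glivenko identities $\neg(s\meet t)\eq\neg(\dm s\meet\dm t)$ and $\neg(st)\eq\neg(s\meet t)$ to rewrite the negated meet in a controlled way. The key leverage is that since $\dm\m A$ is Boolean, inside the double-negation image meets, joins, products and residuals all behave classically, so I can pass $\neg$ across the connectives freely, and $(\star\star)$ lets me commute $\neg$ (equivalently $\dm$) past $\under$ in its right coordinate. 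Using \cref{RLfacts}(2) ($x\under(y\meet z)\eq(x\under y)\meet(x\under z)$) to split residuals and \cref{dblr}(6) ($\dm x\under\dm y\eq x\under\dm y$) to normalize double-negations, both sides should reduce to a common Boolean expression in $\dm x,\dm y,\neg x,\neg y$.

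The step I expect to be the main obstacle is matching the two residual terms $y\under\dm x$ on one side against the term arising from $\neg y$ on the other: concretely, establishing that $\neg(x\under y)$ and $\neg(y\under\dm x)$ recombine, after applying involutivity in $\dm\m A$ and $(\star\star)$, into exactly $x\under\neg y$ and $\neg y\under\dm x$. The subtlety is that $\cnxm$ mixes $\dm x$ (living in the Boolean image) with $x,y$ (which need not), so I must be careful to apply $(\star\star)$ and \cref{dblr}(6) only where a $\dm$ actually guards the relevant argument, pushing everything into $\dm\m A$ before invoking Boolean reasoning, and then reading the equality back out in $\m A$ via the fact that $\neg=\neg\dm$ already takes values in $\dm\m A$. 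Once both $\neg(x\cnxm y)$ and $x\cnxm\neg y$ are shown to equal the same double-negation term, \eqref{P3} (hence \eqref{P2}) follows, and \Cref{BTequiv0} delivers proto-connexivity.
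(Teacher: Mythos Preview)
Your approach is correct and shares the paper's high-level strategy: verify \eqref{P2} (equivalently \eqref{P3}) for $\cnxm$, then invoke \Cref{BTequiv0}/\Cref{BTequiv1} to conclude proto-connexivity. The implementations, however, differ. You plan a direct computation inside $\m A$: push the terms into $\dm\m A$ using $(\star\star)$ and \Cref{dblr}, exploit Booleanness there, and then read the resulting equality back out in $\m A$. The paper instead uses the \emph{definition} of the equational Glivenko property as a one-shot transfer principle: since $\m A\in\GLBA{\FLe}$, one has $\m A\models\neg s\eq\neg t$ if and only if $\BA\models s\eq t$. Thus to verify $\neg(x\cnxm\neg y)\eq\dm(x\cnxm y)$ in $\m A$ it suffices to check that $x\cnxm\neg y\eq\neg(x\cnxm y)$ holds in Boolean algebras, where $\cnxm$ collapses to the material biconditional $\leftrightarrow$ and the identity reduces to the classical fact $x\leftrightarrow\neg y\eq\neg(x\leftrightarrow y)$. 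This completely sidesteps your ``main obstacle'': there is no need to track which arguments are guarded by $\dm$, nor to invoke $(\star\star)$ or \Cref{WhenDMisInt} at all. Your route will work (indeed, noting that $x\cnxm\neg y$ already lies in $\dm A$ by \Cref{charlanfcnxone}(2) shortens it considerably), but it is heavier; the paper's route directly exploits the very thing that membership in $\GLBA{\FLe}$ hands you.
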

\begin{proof}
Boolean algebras are involutive and have ${\meet} ={\cdot}$, so it follows that 
$$\GLBA{\FLe}\models \neg [x\cnx y] \eq \neg[(x\under y)\meet (y\under x)].$$
Furthermore, it is easily verified that Boolean algebras satisfy the identities: 
$$(x\under \neg y)\meet (\neg y \under \neg\neg x) \eq (x\under \neg y)\meet (\neg y \under x) \eq \neg[(x\under y)\meet (y\under x)],$$ and hence it follows that $\GLBA{\FLe}$ satisfies $\neg(x\cnx \neg y) \eq \dm (x\cnx y)$. 
So $(\m A,\cnxm)$ satisfies (\ref{P2}), and therefore is proto-connexive by \Cref{BTequiv1}. 
\end{proof}

We now obtain our promised characterization from \Cref{BTequiv1,GLBAisCnx,FLeCnxInGLBA}.
\begin{theorem}\label[theorem]{thm: charconn}
Let $\V$ be a variety of \FLeA-algebras. Then $(\V,\cnxm)$ is proto-connexive [or satisfies any of the identities \eqref{BT}, \eqref{BT'}, or \eqref{P2}] if and only if $\V \subseteq\GLBA{\FLe}$. 
Consequently, the largest variety of \FLeA-algebras for which $\cnxm$ is proto-connexive is exactly $\GLBA{\FLe}$.
\end{theorem}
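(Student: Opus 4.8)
The plan is to assemble the theorem directly from the three lemmas just proved, since they already do the heavy lifting. The statement asserts, for an arbitrary variety $\V\subseteq\FLe$, an equivalence between ``$(\V,\cnxm)$ is proto-connexive'' (or satisfies any one of \eqref{BT}, \eqref{BT'}, \eqref{P2}) and the inclusion $\V\subseteq\GLBA{\FLe}$. Because proto-connexivity of a variety is defined membershipwise (\Cref{def:protoconnexive}), and $\GLBA{\FLe}$ is itself a variety, the whole equivalence reduces to a statement about individual algebras $\m A\in\V$, so I would phrase the argument at the level of a single \FLeA-algebra and then quantify.

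First I would record that \Cref{BTequiv1} collapses the list of antecedent conditions: for any fixed $\m A$, the four properties ``$(\m A,\cnxm)$ proto-connexive'', ``$\models\eqref{BT}$'', ``$\models\eqref{BT'}$'', and ``$\models\eqref{P2}$'' are all equivalent. Hence it suffices to prove the equivalence for just one of them, and \eqref{P2} is the convenient choice because both \Cref{FLeCnxInGLBA} and \Cref{GLBAisCnx} are phrased in its terms.

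For the forward direction I would take the contrapositive-free route: assuming $(\V,\cnxm)$ is proto-connexive means every $\m A\in\V$ satisfies \eqref{P2} (via \Cref{BTequiv1}), and \Cref{FLeCnxInGLBA} then gives $\m A\in\GLBA{\FLe}$ for each such $\m A$; since this holds for all members, $\V\subseteq\GLBA{\FLe}$. For the reverse direction, if $\V\subseteq\GLBA{\FLe}$ then every $\m A\in\V$ lies in $\GLBA{\FLe}$, so \Cref{GLBAisCnx} makes $(\m A,\cnxm)$ proto-connexive, whence $(\V,\cnxm)$ is proto-connexive by definition. The ``consequently'' clause about $\GLBA{\FLe}$ being the \emph{largest} such variety is then immediate: taking $\V=\GLBA{\FLe}$ itself shows it is proto-connexive, and the forward direction shows any proto-connexive variety is contained in it.

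Honestly there is no real obstacle here — the theorem is a packaging result, and the only thing to be careful about is the logical bookkeeping between the statements about a variety and the statements about its members, together with making sure the ``any of the identities'' parenthetical is justified uniformly by \Cref{BTequiv1} rather than re-proved three times. I would write it as a short paragraph rather than a displayed computation.

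\begin{proof}
By \Cref{BTequiv1}, for any \FLeA-algebra $\m A$ the conditions that $(\m A,\cnxm)$ be proto-connexive, satisfy \eqref{BT}, satisfy \eqref{BT'}, or satisfy \eqref{P2} are all equivalent; hence it suffices to establish the equivalence with $\V\subseteq\GLBA{\FLe}$ for proto-connexivity alone.

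Suppose $(\V,\cnxm)$ is proto-connexive. Then every $\m A\in\V$ satisfies \eqref{P2} (again by \Cref{BTequiv1}), so $\m A\in\GLBA{\FLe}$ by \Cref{FLeCnxInGLBA}. As this holds for each member, $\V\subseteq\GLBA{\FLe}$. Conversely, if $\V\subseteq\GLBA{\FLe}$ then each $\m A\in\V$ lies in $\GLBA{\FLe}$, whence $(\m A,\cnxm)$ is proto-connexive by \Cref{GLBAisCnx}; thus $(\V,\cnxm)$ is proto-connexive.

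Finally, $\GLBA{\FLe}$ is itself proto-connexive by the reverse direction applied to $\V=\GLBA{\FLe}$, and the forward direction shows every proto-connexive variety is a subvariety of it; hence $\GLBA{\FLe}$ is the largest variety of \FLeA-algebras for which $\cnxm$ is proto-connexive.
\end{proof}
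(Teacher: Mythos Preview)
Your proof is correct and follows exactly the paper's approach: the paper's own proof is the single sentence ``We now obtain our promised characterization from \Cref{BTequiv1,GLBAisCnx,FLeCnxInGLBA},'' and you have simply unpacked that citation into explicit forward and reverse directions with the membershipwise bookkeeping made visible.
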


We now remark upon the connexivity of other connectives in $\GLBA{\FLe}$. We start by proving the following technical lemma, whose generality will be useful in the sequel.

\begin{lemma}\label[lemma]{meetlikearrows}
Let $\m A$ be an \FLeA-algebra such that $\dm\m A$ is integral. Let $\cnx$ be a binary operation over $\m A$ such that $(\m A,\cnx)$ satisfies (i) \eqref{P1} and the identities (ii) $x\cnxp y \leq \dm(x\cnx  y)\leq x\cnxm \dm y$. 
Then the following are equivalent:

\begin{enumerate}
\item $(\m A,\cnx)$ is proto-connexive.
\item $(\m A,\cnx)$ models any one of \eqref{BT}, \eqref{BT'}, or \eqref{P2}.
\item $(\m A,\cnxm)$ is proto-connexive [equivalently, $\m A \in \GLBA{\FLe}$].
\end{enumerate}
In particular, the above holds for $\cnx:= \cnxp$.
\end{lemma}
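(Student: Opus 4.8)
The plan is to make \eqref{P2} for $\cnx$ the pivot of the whole equivalence and route everything through it. The implication (1)$\Rightarrow$(2) is immediate, since proto-connexivity already contains \eqref{BT}. To fold \eqref{BT} and \eqref{BT'} back into \eqref{P2}, I would first record a one-sided companion to \Cref{charlanfcnxone}(1): for \emph{any} $\cnx$ satisfying (ii), the inequality $1\leq a\cnx b$ forces $\dm a=\dm b$. Indeed, by the expansiveness of $\dm$ and the upper bound in (ii), $1\leq a\cnx b\leq\dm(a\cnx b)\leq a\cnxm\dm b$, and $1\leq a\cnxm\dm b$ yields $\dm a=\dm(\dm b)=\dm b$ via \Cref{charlanfcnxone}(1). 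Feeding the witnesses of \eqref{BT} (resp. \eqref{BT'}) into this, namely $a=x\cnx y,\ b=\neg(x\cnx\neg y)$ (resp. $a=x\cnx\neg y,\ b=\neg(x\cnx y)$), and using that $\neg(\cdot)$ is always $\dm$-fixed, collapses each Boethius thesis to $\dm(x\cnx y)=\neg(x\cnx\neg y)$, i.e. \eqref{P2}. Thus (2) is equivalent to ``$(\m A,\cnx)\models\eqref{P2}$''.

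For (2)$\Rightarrow$(3) I would start from \eqref{P2}, combine it with (i)$=$\eqref{P1}, and invoke \Cref{P1P2} to conclude that $(\m A,\cnx)$ is already proto-connexive, so in particular it satisfies \eqref{AT} and \eqref{AT'}. I then mirror \Cref{WhenDMisInt}(3) for the generic $\cnx$: from \eqref{AT} and the \emph{lower} bound in (ii) I get pseudo-complementedness. Setting $c\coloneq x\meet\neg x$, one has $\dm c\leq\neg c$, so $c\cnxp\neg c=(c\under\neg c)\cdot(\neg c\under\dm c)=\neg c\under\dm c\geq\dm c\geq c$ (using integrality of $\dm\m A$), while \eqref{AT} gives $\dm(c\cnx\neg c)\leq 0$ and hence $c\cnxp\neg c\leq\dm(c\cnx\neg c)\leq 0$; together these force $c\leq 0$. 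The remaining task is the Glivenko identity $1\leq\dm(\dm x\under x)$, which I would extract by generalizing \Cref{WhenDMisInt}(3b): \eqref{AT'} gives $1\leq\neg(\dm x\cnx\neg x)=\dm(\dm x\cnx x)$ via \eqref{P2}, which must then be bounded above by $\dm(\dm x\cnxm x)=\dm(\dm x\under x)$. With $\dm\m A$ integral and both properties in hand, \Cref{GLBAbool}(2) places $\m A$ in $\GLBA{\FLe}$, which is (3).

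For (3)$\Rightarrow$(1) I would work inside $\GLBA{\FLe}$, where \Cref{GLBAbool}(3) gives that $\dm\m A$ is Boolean and the proof of \Cref{GLBAbool} supplies the Glivenko identity $(\star\star)\colon\dm(x\under y)=x\under\dm y$. Since $\cdot_\dm=\meet$ on the $\dm$-closed factors, a direct computation gives $\dm(x\cnxp y)=x\cnxm\dm y$; the squeeze in (ii) (apply $\dm$ to the lower bound) then yields $\dm(x\cnx y)=x\cnxm\dm y$, and the same with $y\mapsto\neg y$ gives $\dm(x\cnx\neg y)=x\cnxm\neg y$, hence $\neg(x\cnx\neg y)=\neg(x\cnxm\neg y)$. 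In the Boolean $\dm\m A$ the terms $x\cnxm\dm y$ and $\neg(x\cnxm\neg y)$ both compute to the Boolean biconditional of $\dm x$ and $\dm y$, so they coincide; the two chains therefore meet and produce $\dm(x\cnx y)=\neg(x\cnx\neg y)$, i.e. \eqref{P2} for $\cnx$, and \Cref{P1P2} (with (i)) delivers proto-connexivity. The closing clause for $\cnx\coloneq\cnxp$ is then routine: (i) holds by \Cref{BTequiv0}, while for (ii) the left inequality is $a\leq\dm a$ and the right one reads $\dm(x\cnxp y)\leq x\cnxm\dm y$, which follows from integrality of $\dm\m A$ together with \Cref{dblr}(6).

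I expect the main obstacle to be (2)$\Rightarrow$(3), precisely the passage from \eqref{P2} for the abstract $\cnx$ to the Glivenko identity $1\leq\dm(\dm x\under x)$. The difficulty is that the sandwich (ii) only pins $\dm(x\cnx y)$ between $\dm(x\cnxp y)$ and $x\cnxm\dm y$, and the naive substitution stalls at the vacuous estimate $\dm(\dm x\cnx x)\leq\dm x\cnxm\dm x=\dm1$; whereas for $\cnxm$ itself the analogous step succeeds only because $\dm x\cnxm x=\dm x\under x$ \emph{exactly}. So the real work is to leverage \eqref{P2} and pseudo-complementedness (which makes $\dm\m A$ Boolean) to force the upper bound in (ii) to be \emph{attained}, thereby transferring \eqref{P2} onto $\cnxm$ and letting \Cref{FLeCnxInGLBA} conclude. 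Getting this chain to close—rather than collapsing to a triviality—is exactly where the integrality of $\dm\m A$ is used in an essential, not merely cosmetic, way.
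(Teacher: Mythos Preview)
Your handling of (1)$\Leftrightarrow$(2) and of (3)$\Rightarrow$(1) is correct and matches the paper's argument (the paper phrases the latter via \Cref{BTequiv1}, \Cref{thm: charconn}, and \Cref{GLBA}(1a) rather than via $(\star\star)$ and a Boolean computation, but the content is the same: both arguments establish $\dm(x\cnx y)=\dm(x\cnxm y)$ from the squeeze (ii), and then inherit \eqref{P2}).

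The real divergence is in (2)$\Rightarrow$(3). You aim for \Cref{GLBAbool}(2) directly---pseudo-complementedness plus the Glivenko identity $1\leq\dm(\dm x\under x)$---and correctly diagnose that the second step dead-ends at $\dm(\dm x\cnx x)\leq\dm x\cnxm\dm x=\dm 1$. The paper does \emph{not} take this detour: it aims straight for \eqref{P2} for $\cnxm$ and then invokes \Cref{BTequiv1}. Your own proposed fix in the final paragraph---``transfer \eqref{P2} onto $\cnxm$''---is precisely the right maneuver, and you already have the only ingredient that is missing, namely the identity $\neg(x\cnxm\neg y)=x\cnxm\dm y$ which you computed in your (3)$\Rightarrow$(1) argument. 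So execute it: from \eqref{P2} for $\cnx$ you get \eqref{AT}, hence pseudo-complementedness, hence $\dm\m A$ is Boolean by \Cref{FLeBool}; in Boolean $\dm\m A$ the two endpoints of the squeeze $\neg(x\cnxm\neg y)\leq\dm(x\cnx y)\leq x\cnxm\dm y$ (the lower bound coming from (ii) at $\neg y$, negated, combined with \eqref{P2}) coincide, so $\dm(x\cnx y)=x\cnxm\dm y=\dm(x\cnxm y)$; hence $\neg(x\cnxm\neg y)=\dm(x\cnxm y)$, i.e.\ \eqref{P2} for $\cnxm$, and \Cref{BTequiv1}/\Cref{FLeCnxInGLBA} finish. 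There is no need to establish $1\leq\dm(\dm x\under x)$ separately.
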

\begin{proof}
Let us begin with (1) implies (2). Clearly, (1) implies both \eqref{BT} and \eqref{BT'} by definition. We show that either of these are sufficient to obtain \eqref{P2}. Let $x,y\in A$ and define $a:= x\cnx y$ and $b:=x\cnx \neg y$. If \eqref{BT} holds, then $1\leq a\cnx\neg b$, and we observe 
\begin{equation}\label{P2argument}
\begin{array}{r c l l}
1 
&\leq& a\cnx \neg b \\
&\leq& \dm[a\cnx \neg b] & \mbox{By (ii)} \\
&\leq &  \dm[a\cnxm \neg b] & \mbox{By assumption on $\cnx$} \\
&=&  a \cnxm \neg b & \mbox{By \cref{charlanfcnxone}(2)}\\
\end{array}
\end{equation}
Hence, by \Cref{charlanfcnxone}(1), $\neg\neg  a = \neg b$. So $(\m A, \cnx)\models\mbox{\eqref{P2}}$. The very same argument \eqref{P2argument} follows for the \eqref{BT'}-case by swapping the roles of $a$ and $b$. Hence we have concluded (1) implies (2).

For (2) implies (3), we have that, by the arguments above, $(\m A,\cnx)$ satisfies \eqref{P2} in any case.  So (3) follows from that fact that $(\m A,\cnx)$ is proto-connexive by (i) and \Cref{P1P2}, i.e., it satisfies \eqref{BT}, and so by the same argument \eqref{P2argument} above (by setting $a:= x\cnxm y$ and $b:=x\cnxm \neg y$) we conclude $(\m A,\cnxm)\models \mbox{\eqref{P2}}$, and the result follows from \Cref{BTequiv1}.

Now suppose (3) holds. On the one hand, by \Cref{BTequiv1} $(\m A,\cnxm)\models\mbox{(P3)}$, so $\dm(x\cnxm y)=x\cnxm \dm y$. On the other hand, $\m A \in \GLBA{\FLe}$ by \Cref{thm: charconn}, so $\dm(x\cnxp y)= \dm(x\cnxm y)$ by \Cref{GLBA}(1a). Thus $\dm(x\cnx y)=\dm(x\cnxm y)$ follows the assumption that $\dm(x\cnxp y)\leq \dm(x\cnx y)\leq x\cnxm \dm y$. Hence $(\m A,\cnx)\models\mbox{\eqref{P2}}$ follows from the fact $(\m A,\cnxm)$ does. Since $(\m A,\cnx)\models \mbox{(\ref{P1})}$ by assumption, $(\m A,\cnx)$ is proto-connexive by \Cref{P1P2}. This completes the equivalences.

Lastly, clearly the above holds for $\cnx:=\cnxp$ since (i) and (ii) follow from \Cref{BTequiv0} and \Cref{WhenDMisInt}(2).
\end{proof}

\begin{corollary}\label[corollary]{charconnProd} 
Let $\V$ be a subvariety of \FLeA-algebras. Then $(\V,\cnxm)$ is proto-connexive iff $(\V,\cnxp)$ is proto-connexive and $\V$ satisfies the identity $x\leq \dm 1$. 
\end{corollary}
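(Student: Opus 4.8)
The plan is to reduce both directions to the transfer between $\cnxp$ and $\cnxm$ furnished by \Cref{meetlikearrows}, observing that the side identity $x\leq \dm 1$ is precisely the hypothesis needed to run that lemma. First I would record the elementary reformulation that, for an \FLeA-algebra $\m A$, the identity $x\leq \dm 1$ holds in $\m A$ iff $\dm 1$ is the greatest element of $\m A$, which is in turn equivalent to $\dm\m A$ being integral (its monoid unit $\dm 1$ is the top): the forward implication is immediate, and conversely $x\leq \dm x\leq \dm 1$ whenever $\dm 1$ is the top of $\dm A$. Since $\cnxp$ satisfies conditions (i) and (ii) of \Cref{meetlikearrows} in every \FLeA-algebra — as noted in that lemma, via \Cref{BTequiv0} and \Cref{WhenDMisInt}(2) — it follows that whenever $\dm\m A$ is integral, \Cref{meetlikearrows} yields the equivalence ``$(\m A,\cnxp)$ is proto-connexive iff $(\m A,\cnxm)$ is proto-connexive.''

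For the backward direction this is immediate. Assuming $(\V,\cnxp)$ is proto-connexive and $\V\models x\leq \dm 1$, every member $\m A\in\V$ has $\dm\m A$ integral, so \Cref{meetlikearrows} transfers proto-connexivity of $\cnxp$ to $\cnxm$ on each member, whence $(\V,\cnxm)$ is proto-connexive. For the forward direction I would first extract the side identity, which is the only point requiring more than bookkeeping. Assuming $(\V,\cnxm)$ is proto-connexive, \Cref{thm: charconn} gives $\V\subseteq\GLBA{\FLe}$, and then the characterization \Cref{GLBAbool}(2) tells us each $\m A\in\V$ has $\dm 1$ as its greatest element, i.e.\ $\V\models x\leq \dm 1$. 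This again makes $\dm\m A$ integral, so \Cref{meetlikearrows} transfers proto-connexivity from $\cnxm$ back to $\cnxp$, giving $(\V,\cnxp)$ proto-connexive; together with the side identity just obtained, this completes the direction.

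The only genuinely substantive step is deducing $\V\models x\leq \dm 1$ from the proto-connexivity of $\cnxm$: this is not read off the connexive identities directly, but is exactly the content of \Cref{thm: charconn} combined with \Cref{GLBAbool}. I do not anticipate any further obstacle, since the whole argument is an instance of matching the hypothesis ``$\dm\m A$ integral'' of the transfer lemma \Cref{meetlikearrows} with the explicit side condition $x\leq\dm 1$; once that identification is made, both directions are one-line appeals to that lemma.
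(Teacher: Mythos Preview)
Your argument is correct and follows exactly the route the paper intends: both directions are applications of \Cref{meetlikearrows} with $\cnx:=\cnxp$, the forward direction first extracting $\V\models x\leq \dm 1$ from \Cref{thm: charconn} and \Cref{GLBAbool}. One small imprecision: you assert that conditions (i) and (ii) of \Cref{meetlikearrows} hold for $\cnxp$ ``in every \FLeA-algebra,'' but \Cref{WhenDMisInt}(2) already assumes $\dm\m A$ is integral, so (ii) is only guaranteed under that hypothesis---this does not affect your argument, since you only invoke \Cref{meetlikearrows} once $\dm\m A$ is integral anyway.
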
\label{charconnPage} 

\begin{example}
The converse to the above does not hold if the assumption of $x\leq \dm 1$ is dropped; e.g., $\Z(0)$ has no largest element, $(\Z(0),\cnxp)$ is proto-connexive, but $(\Z(0),\cnxm)\nmodels \mbox{\eqref{BT}}$ (take $x=0$ and $y=1$).
\end{example}

Having established the proto-connexitivity of $\cnxm$ and $\cnxp$ (and other related operations) in the Glivenko variety relative to Boolean algebras, we now investigate the principle of non-symmetry \eqref{NS}.
 
\begin{lemma}\label[lemma]{cor:FLeNS}
Let $\m A\in \GLBA{\FLe}$ and let $\cnx\in\{\cnxp,\cnxm\}$. Then $(\m A,\cnx)$ falsifes \eqref{NS}, i.e., $\cnx$ is symmetric on $\m A$, if and only if $\m A$ is a Boolean algebra.
\end{lemma}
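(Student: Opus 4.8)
The plan is to prove both implications, treating the two connectives as uniformly as possible. The reverse direction is immediate: if $\m A$ is a Boolean algebra then it is involutive (so $\dm x = x$), integral, and satisfies $\cdot = \meet$; hence for either choice $\cnx\in\{\cnxm,\cnxp\}$ we get $x\cnx y = (x\under y)\meet(y\under x)$ (the product collapsing to the meet in the $\cnxp$ case), which is manifestly symmetric, so \eqref{NS} fails. For the forward direction, assume $\cnx$ is symmetric. Since $\m A\in\GLBA{\FLe}$ is already pseudo-complemented and has $\dm\m A$ integral (by \Cref{GLBAbool}), and since, by \Cref{FLeBool}, an \FLeA-algebra that is integral, involutive, and pseudo-complemented is Boolean, it suffices to derive \emph{integrality} and \emph{involutivity} of $\m A$ from symmetry. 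Throughout I will use that $\dm 1$ is the greatest element of $\m A$, that $\dm 1\cdot\dm 1 = \dm 1$, and that $x\under\dm x = \dm x\under\dm x = \dm 1$ (the last via \Cref{dblr}(6) together with integrality of $\dm\m A$).

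For integrality, the key is to test symmetry at the instance $(\dm 1, 1)$. First I would compute $1\cnx\dm 1 = (1\under\dm 1)\ast(\dm 1\under\dm 1) = \dm 1\ast\dm 1 = \dm 1$ for both $\ast\in\{\meet,\cdot\}$, using that $\dm 1$ is greatest. Next, writing $v\coloneq \dm 1\under 1$, I would compute $\dm 1\cnx 1 = v\ast\dm 1$, which for $\cnxm$ equals $v$ (as $v\leq\dm 1$) and for $\cnxp$ equals $v\cdot\dm 1$. Symmetry then yields $v = \dm 1$ in the $\cnxm$ case and $v\cdot\dm 1 = \dm 1$ in the $\cnxp$ case. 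In either case I invoke the defining residuation $\dm 1\cdot v\leq 1$: for $\cnxm$ this gives $\dm 1\cdot\dm 1\leq 1$, whence $\dm 1\leq 1$; for $\cnxp$ it gives $\dm 1 = v\cdot\dm 1 = \dm 1\cdot v\leq 1$ directly. Since $\dm 1$ is the greatest element, $\dm 1 = 1$, i.e. $\m A$ is integral.

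With integrality in hand, involutivity follows by testing symmetry at the instance $(x,\dm x)$. Now $x\under\dm x = \dm x\under\dm x = \dm 1 = 1$, so $x\cnx\dm x = 1\ast 1 = 1$, while $\dm x\cnx x = (\dm x\under x)\ast(x\under\dm x) = (\dm x\under x)\ast 1 = \dm x\under x$ (integrality makes $\dm x\under x\leq 1$, so the meet or product with $1$ is identity). Symmetry gives $\dm x\under x = 1$, hence $\dm x\leq x$, and with $x\leq\dm x$ we conclude $\dm x = x$; thus $\m A$ is involutive. (For $\cnxm$ one may instead cite \Cref{charlanfcnxone}(1) to pass from $1\leq\dm x\cnxm x$ to $\dm x\leq x$.) Combining integrality, involutivity, and pseudo-complementedness, \Cref{FLeBool} delivers that $\m A$ is Boolean.

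I expect the main obstacle to be the $\cnxp$ case of the integrality step. For $\cnxm$ the instance $(x,\dm x)$ alone already forces involutivity even without integrality, because $\dm x\cnxm x = (\dm x\under x)\meet\dm 1 = \dm x\under x$ and symmetry gives $\dm x\under x = \dm 1\geq 1$. For $\cnxp$, however, the analogous computation only yields $(\dm x\under x)\cdot\dm 1 = \dm 1$, and the trailing factor $\dm 1$ cannot be stripped off without first knowing $\dm 1 = 1$; this is precisely the obstruction created by the monoidal product. The device that circumvents it is the $(\dm 1,1)$ instance above, where the residuation inequality $\dm 1\cdot(\dm 1\under 1)\leq 1$ collides with the equation $(\dm 1\under 1)\cdot\dm 1 = \dm 1$ extracted from symmetry, forcing $\dm 1\leq 1$. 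Identifying this instance is the crux, and it makes both connectives amenable to a single unified argument.
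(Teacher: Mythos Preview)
Your proof is correct and follows essentially the same approach as the paper's: both establish integrality by testing symmetry at the pair $(1,\dm 1)$ and exploiting $\dm 1\cdot(\dm 1\under 1)\leq 1$, then derive involutivity and conclude via \Cref{FLeBool}. The only cosmetic difference is in the involutivity step: the paper uses the instance $(1,x)$, computing $1\cnx x = x$ and $x\cnx 1 = \dm x$ directly, whereas you use $(x,\dm x)$; both arguments are equally valid and of the same length.
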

\begin{proof}
Of course, if $\m A$ is a Boolean algebra, then $\cnxp ={\leftrightarrow}=\cnxm$ which is indeed symmetric. For the converse direction, first note that $\dm 1$ is the largest element of $\m A$. Let $*\in \{\meet,\cdot\}$ and $\cnx\in \{\cnxp,\cnxm\}$
$$\dm 1 = (1\to\dm 1)*(\dm 1 \to\dm 1)=1\cnx\dm 1 =\dm 1\cnx 1= (\dm 1\to 1)*(1\to\dm 1)=(\dm 1\to 1)*\dm 1.$$
On the one hand, if ${*}={\cdot}$, then $(\dm 1\to 1)*\dm 1\leq 1$, so $1=\dm 1$. On the other hand, for ${*}={\meet}$ we have $(\dm 1\to 1)*\dm 1 = \dm1 \to 1$, so one has $\dm 1\cdot\dm 1\leq 1$ and we compute
$x=x\cdot 1\leq x\cdot\dm 1\leq\dm 1\cdot\dm 1\leq 1$. So $1=\dm 1$ in either case, and consequently $1=x\under 1$.

Now, $x=(1\to x)*(x\to\dm 1)=1\cnx x=x\cnx 1=(x\to 1)*(1\to\dm x)=\dm x.$ 
The involutivity of $\neg$, together with $\m A\in\GLBA{\FLe}$, yield directly that $\m A$ is Boolean.
\end{proof}

Therefore, by \cref{thm: charconn}, \cref{charconnProd}, \Cref{GLBAisBigger}, and the lemma above:
\begin{corollary}\label[corollary]{connexcon}
Let $\V$ be any variety of \FLeA-algebras in the interval between $\mathsf{HA}$ and $\GLBA{\FLe}$. Then the connectives $\cnxm$ and $\cnxp$ are both connexive in $\V$.
\end{corollary}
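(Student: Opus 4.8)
The plan is to assemble the corollary directly from the characterization results already in hand, since establishing that $(\V,\cnx)$ is \emph{connexive} requires exactly two things: proto-connexivity, which must hold in every member of $\V$, and a single member of $\V$ witnessing the non-symmetry principle \eqref{NS}, i.e.\ $\V\nmodels x\cnx y\eq y\cnx x$. I would treat these two requirements separately.

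First I would dispatch proto-connexivity for both connectives. Because the hypothesis places $\V$ in the interval with $\V\subseteq\GLBA{\FLe}$, \Cref{thm: charconn} immediately yields that $(\V,\cnxm)$ is proto-connexive. To transfer this to $\cnxp$, I would invoke \Cref{charconnProd}: from the fact that $(\V,\cnxm)$ is proto-connexive, that corollary delivers both that $\V$ satisfies $x\leq\dm 1$ and that $(\V,\cnxp)$ is proto-connexive. Hence both $\cnxm$ and $\cnxp$ are proto-connexive throughout $\V$, with no computation required beyond chaining these two statements.

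The remaining task is to exhibit a member of $\V$ on which the relevant connective fails to be symmetric. Here I would use the other endpoint of the interval, $\mathsf{HA}\subseteq\V$, together with \Cref{GLBAisBigger}, which records that $\BA\subsetneq\GLBA{\FLew}$ precisely because there exist Heyting algebras that are not Boolean. Fixing such a non-Boolean $\m A\in\mathsf{HA}$, the chain $\mathsf{HA}\subseteq\GLBA{\FLew}\subseteq\GLBA{\FLe}$ (again from \Cref{GLBAisBigger}) guarantees $\m A\in\GLBA{\FLe}$, so \Cref{cor:FLeNS} applies: for $\cnx\in\{\cnxp,\cnxm\}$ the connective $\cnx$ is symmetric on a member of $\GLBA{\FLe}$ if and only if that member is Boolean. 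Since $\m A$ is not Boolean, $(\m A,\cnx)$ satisfies \eqref{NS} for both choices of $\cnx$, whence $\V\nmodels x\cnx y\eq y\cnx x$.

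Combining the two halves, both $(\V,\cnxm)$ and $(\V,\cnxp)$ are proto-connexive and each admits a non-symmetric member, so both are connexive in $\V$. I expect no genuine obstacle here: the only point deserving care is verifying that the non-Boolean Heyting witness supplied by $\mathsf{HA}$ actually lies inside $\GLBA{\FLe}$ so that \Cref{cor:FLeNS} is applicable, and this is immediate from the interval hypothesis $\mathsf{HA}\subseteq\V\subseteq\GLBA{\FLe}$.
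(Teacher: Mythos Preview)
Your proposal is correct and follows essentially the same approach as the paper, which simply cites \Cref{thm: charconn}, \Cref{charconnProd}, \Cref{GLBAisBigger}, and \Cref{cor:FLeNS} without elaboration; you have merely filled in the details of how these results combine. The only superfluous step is routing the non-Boolean Heyting witness through $\GLBA{\FLew}$ before concluding it lies in $\GLBA{\FLe}$, since the interval hypothesis $\mathsf{HA}\subseteq\V\subseteq\GLBA{\FLe}$ already gives this directly (as you yourself observe at the end).
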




\subsection{Generalizations in the integral case}

We now consider the special case when dealing with \emph{integral} (and $0$-\emph{bounded}) \FLeA-algebras, and show that the results from Section~\ref{sec:charwhencnxmdiscon} can be strengthened. In fact, Theorem \ref{FLeSPC} shows that, in the setting of $\FLei$ ($\FLew$), a binary operation in the pointwise ordered interval $[\cnxp,\cnxm]$ is proto-connexive if and only if \emph{any} operation in the interval is. Moreover, $\dm$ is the only {\expansive} mapping for which this holds. Furthermore, we demonstrate that satisfying any one of Aristotle's theses is sufficient to guarantee proto-connexity for subvarieties of $\FLew$.

Since integral \FLeA-algebras satisfy the identity $x\cdot y \leq x\meet y$, given such an algebra $\m A$ and map $\pos$, it follows $x\cnxpt y\leq x\cnxmt y$ for all $x,y\in A$. We define the (nonempty) interval of functions $[\cnxpt,\cnxmt]\subseteq A^2\times A$ via: 
$$f\in[\cnxpt,\cnxmt] \iff (\forall x,y\in A)[ x\cnxpt y \leq f(x,y) \leq x\cnxmt y ].$$

\begin{lemma}\label[lemma]{TauIsDm}
Let $\m A$ be an integral \FLeA-algebra, $\pos$ be an {\expansive} map on $\m A$, and $\cnx\in[\cnxpt,\cnxmt]$. 
\begin{enumerate}
\item For all $x\in A$, $1\cnx x = x$ and $x\cnx 1=\ppos{x}$. 
\item If $(\m A, \cnx)$ models either {\eqref{AT}} or \eqref{AT'}, then $\m A$ is pseudo-complemented. 
\item If $(\m A,\cnx)\models\mbox{\eqref{BT}}$ then $\neg x\eq x\cnx 0$, ${\pos} = {\dm}$, and $\m A \in \GLBA{\FLei}$.
\end{enumerate}
\end{lemma}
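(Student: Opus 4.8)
The plan is to work entirely within an integral \FLeA-algebra $\m A$, exploiting that integrality gives $1$ as the greatest element, so that $x\under 1\eq 1$ and $1\under x \eq x$ for all $x$, and that $\cnx\in[\cnxpt,\cnxmt]$ is squeezed between the product-type and meet-type operations. For part (1), I would simply evaluate the bounding operations at the distinguished arguments. Computing $1\cnxpt x$ and $1\cnxmt x$: both equal $(1\under x)\cdot(x\under\ppos 1)$ respectively $(1\under x)\meet(x\under \ppos 1)$, and using $1\under x \eq x$ together with $x\leq \ppos x$ (hence, by integrality, simplifications of the second conjunct) should collapse both bounds to $x$, forcing $1\cnx x \eq x$ by the squeeze. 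Dually, $x\cnx 1$ is squeezed between $x\cnxpt 1$ and $x\cnxmt 1$, and since $1\under \ppos x$ behaves and $x\under 1\eq 1$, both bounds should equal $\ppos x$, giving $x\cnx 1\eq \ppos x$.

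For part (2), I would use part (1) to transfer the hypothesis \eqref{AT} (or \eqref{AT'}) into a statement about $\ppos$ and $\neg$. Since \eqref{AT} reads $1\leq \neg(x\cnx \neg x)$, residuation gives $x\cnx\neg x\leq 0$; I then bound $x\cnx\neg x$ below by $x\cnxpt\neg x$ and try to recover $x\meet\neg x\leq 0$, i.e.\ pseudo-complementedness. The cleanest route is likely to mimic the argument of \Cref{WhenDMisInt}(3a): set $c\coloneq x\meet\neg x$, observe $c\leq\neg c$, and then exhibit $c\leq c\cnx\neg c\leq 0$ using the lower bound $\cnxpt$ and integrality, since the $\cnxpt$-expression at $(c,\neg c)$ reduces to something dominating $c$. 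The \eqref{AT'}-case is symmetric.

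For part (3), the heart of the lemma, I would again start from \eqref{BT}, i.e.\ $1\leq (x\cnx y)\cnx\neg(x\cnx\neg y)$, and feed in the computations from part (1). Instantiating $y\coloneq 0$ and using $x\cnx 0$ together with $1\cnx z\eq z$ and $z\cnx 1\eq\ppos z$ should pin down $\neg x\eq x\cnx 0$; the idea is that $\neg y\eq 1$ when $y\eq 0$ (as $\neg 0\eq 1$ by integrality-plus-boundedness considerations—here I must be careful, as integrality alone does not give $0$-boundedness, so the correct reduction uses $x\cnx\neg 0 \eq x\cnx 1\eq\ppos x$ from part (1)). Establishing ${\pos}={\dm}$ is the step I expect to be the main obstacle: I would leverage \Cref{BTequiv1}-style reasoning, showing that \eqref{BT} forces a \eqref{P2}-type falsification identity, from which $\ppos x$ must coincide with $\neg\neg x \eq \dm x$ by squaring the bounds and invoking the uniqueness built into \Cref{charlanfcnxone}. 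Once ${\pos}={\dm}$ is secured, $\cnx$ lies genuinely in $[\cnxp,\cnxm]$, part (2) gives pseudo-complementedness, and \eqref{BT} (via \Cref{P1P2}/\Cref{BTequiv1}) yields $1\leq\dm(\dm x\under x)$; membership in $\GLBA{\FLei}$ then follows from \Cref{lem:integralGLBA} together with \Cref{GLBAbool}, completing the proof. The delicate point throughout is that $\pos$ is an arbitrary increasing map rather than a nucleus, so I cannot use properties like $\ppos{\ppos x}\eq\ppos x$ until after I have forced ${\pos}={\dm}$.
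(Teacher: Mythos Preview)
Your plan for parts (1) and (2) is essentially the paper's own argument and is fine.

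Part (3), however, has a genuine gap. Your chosen instantiation $y\coloneq 0$ in \eqref{BT} yields $1\leq (x\cnx 0)\cnx\neg\ppos x$ (since $\neg 0 = 1$ by integrality and $x\cnx 1 = \ppos x$ by part (1)), but this does \emph{not} pin down $x\cnx 0 = \neg x$. The squeeze gives $x\cnx 0 = \neg x$ only once you know $0\leq \ppos x$ (so that $0\under\ppos x = 1$ and both bounds $x\cnxpt 0$, $x\cnxmt 0$ collapse to $\neg x$), and you have no way to obtain $0\leq\ppos x$ from the $y\coloneq 0$ instance alone; remember $\pos$ is merely \expansive, not monotone, and $\m A$ is not assumed $0$-bounded. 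The paper instead first instantiates the \emph{outer} variable to $1$ in \eqref{BT}, obtaining $1\leq (1\cnx x)\cnx\neg(1\cnx\neg x) = x\cnx\dm x$, whence the upper bound $\cnxmt$ gives $\dm x\leq\ppos x$. Since $0\leq\dm x$ always holds, this yields $0\leq\ppos x$ and therefore $x\cnx 0 = \neg x$ by the squeeze. Only then does the instantiation $y\coloneq 1$ produce $1\leq (x\cnx 1)\cnx\neg(x\cnx 0) = \ppos x\cnx\dm x$, from which $\ppos x\leq\dm x$ follows and hence $\pos = \dm$.

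Your proposed route to $\pos=\dm$ via ``\Cref{BTequiv1}-style reasoning'' and ``the uniqueness built into \Cref{charlanfcnxone}'' is circular: both of those results are stated for $\cnxm = \cnxmd$, i.e.\ they already presuppose $\pos = \dm$. Likewise, the final membership $\m A\in\GLBA{\FLei}$ does not come from ``\eqref{BT} via \Cref{P1P2}/\Cref{BTequiv1} yields $1\leq\dm(\dm x\under x)$''---\Cref{P1P2} does not give that identity. Once $\pos=\dm$ is in hand, the clean route is to observe $\cnx\in[\cnxp,\cnxm]$ satisfies the hypotheses of \Cref{meetlikearrows} (with $\dm\m A$ integral by integrality of $\m A$), which then delivers $\m A\in\GLBA{\FLe}$ directly.
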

\begin{proof}
(1) easily follows from the following calculations, by integrality and virtue of the fact that $\cnx\in[\cnxpt,\cnxmt]$ and $\pos$ is increasing: For $x\in A$,
\begin{align*}
1\cnxp x = (1\under x)\cdot (x\under \ppos 1)=x\cdot 1=x=x\meet 1=(1\under x)\meet (x\under \ppos 1) = 1\cnxm x;\\
x\cnxp 1 = (x\under 1)\cdot( 1\under \ppos x) = 1\cdot \ppos{x} = \ppos{x}= 1\cdot \ppos{x}=(x\under 1)\meet( 1\under \ppos x) = x\cnxm 1.
\end{align*}

For (2), following essentially the same argument as \Cref{WhenDMisInt}(3a). Suppose $(\m A,\cnx)$ satisfies \eqref{AT}, and let $x\in A$. Set $c\coloneq  x\meet\neg x$ and recall that $c\leq \neg c$. 
Since $ \m A$ is integral, we have, on the one hand, $ 1 = c\under \neg c $, and on the other hand $\ppos{c}  \leq \neg c \under \ppos{c} $. Hence
$$c\leq \ppos{c} \leq \neg c \to \ppos{c} = (c\under \neg c)\cdot (\neg c \to \ppos{c} ) = c\cnxpt \neg c \leq c\cnx \neg c\leq 0, $$
where the last line follows from \eqref{AT} by residuation. So $\m A$ is pseudo-complemented. The same argument follows for \eqref{AT'} by considering $\neg\neg c \cnx \neg c$, as $c\leq \neg c$ implies $\dm c\leq \neg c$, so $c\leq \pos(\dm c)\leq  (\dm c \under \neg c)\cdot (\neg c \under \pos(\dm c)) =\dm c \cnxp \neg c \leq \dm c \cnx \neg c \leq 0 $.

For (3), suppose $(\m A,\cnx)\models \eqref{BT}$. Using \eqref{BT} and (1), we have
$$1 = (1\cnx x) \cnx \neg (1 \cnx \neg x) = x \cnx \neg\neg x \leq x\cnxmt \dm x =(x\to \dm x) \meet (\dm x \to \ppos{x}) =  \dm x \under \ppos{x},  $$
where the last equality follows from integrality. 
So $\dm x \leq \ppos{x}$ by residuation. Since $\m A$ is integral, $0\cdot \neg x\leq 0$ and thus $0\leq \dm x\leq \ppos{x}$, so it follows that $ 0\under \ppos{x} = 1$. Hence
$$\neg x = (x\under 0)\cdot (0\under \ppos{x})= x\cnxpt 0 \leq  x\cnx 0 \leq x\cnxmt 0 = (x\under 0)\meet (0\under \ppos{x}) = \neg x ,$$
completing the first claim. Towards the second claim, it suffices to show $\ppos{x}\leq \dm x$, or equivalently $1\leq \ppos{x}\to \dm x$. Indeed, we see
$$ 
\begin{array}[b]{ r c l l}
1 &=& (x\cnx 1) \cnx \neg (x \cnx \neg 1)  & \mbox{By \eqref{BT}}\\
&=& \ppos{x} \cnx \neg (x \cnx 0) & \mbox{By (1)}\\
&=& \ppos{x} \cnx \dm x & \mbox{By (b)}\\
&\leq& \ppos{x} \cnxmt \dm x & \mbox{Def. of $\cnx$}\\
&=& \ppos{x}\under \dm x &\mbox{By integrality since $\dm x \under \ppos{x}=1$}.
\end{array}
$$
Hence $\pos = \dm$, and so $\cnx\in [\cnxp,\cnxm]$. Since $x\cnxp y \leq x \cnx y$, $\cnx $ clearly satisfies $\eqref{P1}$ by \Cref{BTequiv0}. Therefore, $\cnx$ satisfies the assumptions of \Cref{meetlikearrows}, and hence $\m A \in \GLBA{\FLe}$. Since $\m A$ is integral by assumption, the claim follows.
\end{proof}
The lemma above allows us to conclude, in contrast to the general case in Section~\ref{sec:charwhencnxmdiscon}, the following stronger theorems in the integral setting.
\begin{theorem}
Let $\m A$ be an integral \FLeA-algebra, $\pos$ an {\expansive} mapping on $\m A$, and $\cnx\in[\cnxpt,\cnxmt]$. Then $(\m A,\cnx)$ is proto-connexive but it does not satisfy $\NS$ if and only if $\pos = \dm$ and $\m A$ is a Boolean algebra.
\end{theorem}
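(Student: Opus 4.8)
The plan is to obtain both implications by assembling the preceding lemmas, with \Cref{TauIsDm} doing the decisive work. For the forward direction, suppose $(\m A,\cnx)$ is proto-connexive and symmetric (i.e., does not satisfy \eqref{NS}). Proto-connexivity includes the thesis \eqref{BT}, so I would apply \Cref{TauIsDm}(3) at once to conclude $\pos=\dm$ and $\m A\in\GLBA{\FLei}$. With $\pos=\dm$ fixed, \Cref{TauIsDm}(1) evaluates $\cnx$ at the unit: $1\cnx x = x$ and $x\cnx 1 = \dm x$. The failure of \eqref{NS} then forces $x = 1\cnx x = x\cnx 1 = \dm x$, so $\m A$ is involutive. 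Finally, \Cref{GLBAint} gives $\GLBA{\FLei}=\FLei+\Spc$, so $\m A$ is pseudo-complemented; being also integral and involutive, \Cref{FLeBool} yields that $\m A$ is Boolean.

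For the reverse direction, suppose $\pos=\dm$ and $\m A$ is Boolean. The key observation is that ${\cdot}={\meet}$ in a Boolean algebra, whence for $\pos=\dm$ the operations $\cnxp$ and $\cnxm$ literally coincide; thus the interval $[\cnxp,\cnxm]$ is a singleton and necessarily $\cnx=\cnxm$. Since $\m A\in\BA\subseteq\GLBA{\FLe}$, \Cref{GLBAisCnx} shows $(\m A,\cnxm)$ is proto-connexive, hence so is $(\m A,\cnx)$. For symmetry, involutivity reduces $x\cnxm y=(x\under y)\meet(y\under\dm x)$ to $(x\under y)\meet(y\under x)=x\bic y$, which is symmetric; therefore $(\m A,\cnx)$ does not satisfy \eqref{NS}.

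The heavy lifting is already done in \Cref{TauIsDm}(3), which extracts both $\pos=\dm$ and membership in $\GLBA{\FLei}$ from a single Boethius thesis; so I do not expect a serious obstacle here. The only genuinely new points are the two halves of the interval-versus-symmetry interplay: on one side, evaluating $\cnx$ at $1$ via \Cref{TauIsDm}(1) converts the symmetry hypothesis into involutivity (and thence, with Glivenko, into Boolean-ness); on the other, Boolean-ness collapses the entire interval $[\cnxp,\cnxm]$ to the single symmetric operation $\bic$. Care is needed only to record that $\GLBA{\FLei}$ supplies pseudo-complementation, so that the hypotheses of \Cref{FLeBool} are met.
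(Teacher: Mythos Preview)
Your proof is correct and follows essentially the same route as the paper's. The only cosmetic difference is that, after deducing $\dm=\mathrm{id}_{\m A}$, the paper invokes $\dm\m A\in\BA$ (via \Cref{GLBAbool}) to conclude $\m A=\dm\m A$ is Boolean, whereas you instead feed integrality, involutivity, and pseudo-complementation directly into \Cref{FLeBool}; both arguments amount to the same thing.
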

\begin{proof}
Suppose that $(\m A,\cnx)$ is proto-connexive but it does not satisfy $\NS$. So $(\m A,\cnx)$ satisfies \eqref{BT}, and hence $\pos=\dm$ and $\m A\in \GLBA{\FLei}$ by \Cref{TauIsDm}(3), in particular $\pos\m A = \dm\m A \in \BA$. Since $\NS$ is not satisfied and hence, for any $x\in A$, $x\cnx 1 = 1\cnx x$.  Thus $x = \ppos{x}=\dm x$ by \Cref{TauIsDm}(1), and we conlcude $\dm = \mathrm{id}_\m{A}$. So $\m A=\dm\m A$ is a Boolean algebra.  

The converse direction easily follows upon noticing that, if $\m A$ is a Boolean algebra and $\dm=\pos$, then ${\cnx} = {\bic}$ follows from involutivity and fact that $\cdot$ and $\meet$ coincide in Boolean algebras. Of course, material equivalence in Boolean algebras is proto-connexive but fails $\NS$ by straightforward calculations. 
\end{proof}

\begin{theorem}\label[theorem]{FLeSPC}
For $\m A$ an integral \FLeA-algebra, the following are equivalent:
\begin{enumerate}
\item $\m A \in \GLBA{\FLei}$ (i.e., $\m A$ is \spc).
\item For all $\cnx\in [\cnxp,\cnxm]$, $(\m A,\cnx)$ is proto-connexive.
\item There exists an {\expansive} $\pos$ and $\cnx\in [\cnxpt,\cnxmt]$ such that $(\m A,\cnx)\models \mbox{\eqref{BT}}$.
\end{enumerate}
\end{theorem}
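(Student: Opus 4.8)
The plan is to prove the cycle $(1)\Rightarrow(2)\Rightarrow(3)\Rightarrow(1)$, the backbone being the observation that integrality of $\m A$ forces $\dm 1 = 1$ to be the greatest element (since $1\leq \dm 1\leq 1$), so that $\dm\m A$ is integral and the standing hypotheses of \Cref{meetlikearrows} and \Cref{TauIsDm} are satisfied for free; moreover $\GLBA{\FLei}\subseteq\GLBA{\FLe}$ by \Cref{GLBAisBigger}. I would first dispatch the two cheap implications. For $(3)\Rightarrow(1)$, if some {\expansive} $\pos$ and $\cnx\in[\cnxpt,\cnxmt]$ give $(\m A,\cnx)\models\eqref{BT}$, then \Cref{TauIsDm}(3) yields at once that $\pos=\dm$ and $\m A\in\GLBA{\FLei}$, which is exactly (1). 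For $(2)\Rightarrow(3)$, I instantiate $\pos:=\dm$ and $\cnx:=\cnxm$; since trivially $\cnxm\in[\cnxp,\cnxm]$, hypothesis (2) tells us $(\m A,\cnxm)$ is proto-connexive, whence it models $\eqref{BT}$ by definition of proto-connexivity, establishing (3).

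The substantive step is $(1)\Rightarrow(2)$, and here the plan is to verify that an arbitrary $\cnx\in[\cnxp,\cnxm]$ meets the two hypotheses of \Cref{meetlikearrows} and then invoke that lemma. For (i), namely \eqref{P1}: since $\cnxp\leq\cnx$ pointwise and $1\leq x\cnxp\dm x$ by \Cref{BTequiv0}, we get $1\leq x\cnx\dm x$. For (ii), namely $x\cnxp y\leq\dm(x\cnx y)\leq x\cnxm\dm y$, the left inequality is immediate from $x\cnxp y\leq x\cnx y\leq\dm(x\cnx y)$, using $\cnxp\leq\cnx$ and expansivity of $\dm$. For the right inequality I would chain $x\cnx y\leq x\cnxm y\leq x\cnxm\dm y$, the last step by \Cref{CnxOrdProp} since $y\leq\dm y$ and $\dm y=\dm\dm y$, and then apply the monotone map $\dm$ together with \Cref{charlanfcnxone}(2), which gives $\dm(x\cnxm\dm y)=x\cnxm\dm y$. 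With (i) and (ii) in place, and since $\m A\in\GLBA{\FLei}\subseteq\GLBA{\FLe}$ makes condition (3) of \Cref{meetlikearrows} hold (equivalently, $(\m A,\cnxm)$ is proto-connexive by \Cref{thm: charconn}), the equivalence in that lemma yields that $(\m A,\cnx)$ is proto-connexive, as required for every $\cnx$ in the interval.

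I expect no single step to be a genuine obstacle, since the heavy lifting is delegated to \Cref{meetlikearrows} (for $(1)\Rightarrow(2)$) and \Cref{TauIsDm}(3) (for $(3)\Rightarrow(1)$). The only point demanding care is confirming that \emph{every} operation in the interval $[\cnxp,\cnxm]$ satisfies hypothesis (ii) of \Cref{meetlikearrows} uniformly; this is precisely the monotonicity-and-expansivity bookkeeping sketched above, and it is what lets a statement about the two endpoints $\cnxp,\cnxm$ be upgraded to the whole interval.
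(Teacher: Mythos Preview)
Your proof is correct and follows essentially the same route as the paper: $(3)\Rightarrow(1)$ via \Cref{TauIsDm}(3), $(2)\Rightarrow(3)$ trivially, and $(1)\Rightarrow(2)$ by verifying the hypotheses of \Cref{meetlikearrows} and invoking that lemma. You are in fact slightly more explicit than the paper, which only checks hypothesis~(ii) of \Cref{meetlikearrows} (using \eqref{P3} to get $\dm(x\cnxm y)=x\cnxm\dm y$ directly) and leaves (i) implicit; your route through \Cref{CnxOrdProp} and \Cref{charlanfcnxone}(2) for the right inequality of~(ii) is an equally valid alternative that avoids appealing to \eqref{P3}.
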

\begin{proof}
For (1) implies (2), note that $\cnx\in [\cnxp,\cnxm]$ implies $x\cnxp y \leq \dm(x\cnx y)\leq \dm(x\cnxm y)$ and $\dm(x\cnxm y) = x\cnxm \dm y$ by the fact that $\cnxm$ satisfies \eqref{P3} by (1). Since $ \dm \m A$ is integral, by fiat, (2) follows from \Cref{meetlikearrows}. (2) implies (3) is obvious, and (3) implies (1) by \Cref{TauIsDm}(3).
\end{proof}

\noindent In light of \Cref{GLBAint} and \Cref{TauIsDm}(2), the theorem above implies:
\begin{corollary}\label[corollary]{thm: FLewPC}
Let $\m A \in \FLew$. Then the following are equivalent.
\begin{enumerate}
\item $\m A \in \GLBA{\FLew}$ (i.e., $\m A$ is pseudo-complemented).
\item For all $\cnx\in [\cnxp,\cnxm]$, $(\m A,\cnx)$ is proto-connexive.
\item There exists an {\expansive}  $\pos$ and $\cnx\in [\cnxpt,\cnxmt]$ such that $(\m A,\cnx)\models\mbox{\eqref{AT}}$.
\end{enumerate}
\end{corollary}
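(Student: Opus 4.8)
The plan is to obtain this purely as a specialization of \Cref{FLeSPC} to the $0$-bounded setting, so I would not redo any of the substantive work but instead assemble three earlier results. The crucial preliminary observation is that weakening collapses the relevant notions of pseudo-complementation: by \Cref{GLBAint} we have $\GLBA{\FLew}=\FLew+\eqref{pc}$ and $\GLBA{\FLei}=\FLei+\eqref{spc}$, while \Cref{rem:spc-pc} records that \eqref{spc} and \eqref{pc} are interderivable for $0$-bounded algebras. Since $\FLew\subseteq\FLei$, these combine to show that, for $\m A\in\FLew$, the four conditions ``$\m A\in\GLBA{\FLew}$'', ``$\m A$ is pseudo-complemented'', ``$\m A$ is \spc'', and ``$\m A\in\GLBA{\FLei}$'' all coincide. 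In particular statement (1) of this corollary is equivalent to statement (1) of \Cref{FLeSPC}.

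With that bridge, the equivalence of (1) and (2) is immediate, since statement (2) here is word-for-word statement (2) of \Cref{FLeSPC}: I would simply invoke the (already proved) equivalence (1)$\Leftrightarrow$(2) of that theorem and read the left-hand side through the identification above. To close the cycle through (3) -- where the only change from \Cref{FLeSPC} is that the weaker \eqref{AT} replaces \eqref{BT} -- I would handle (2)$\Rightarrow$(3) trivially by choosing $\pos\coloneq\dm$, so that $[\cnxpt,\cnxmt]=[\cnxp,\cnxm]$, and then taking $\cnx\coloneq\cnxm$; condition (2) makes $(\m A,\cnxm)$ proto-connexive, hence in particular a model of \eqref{AT}, which is exactly (3).

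The only step with genuine content is (3)$\Rightarrow$(1), and this is precisely where \Cref{TauIsDm}(2) earns its keep: from an {\expansive} $\pos$ and some $\cnx\in[\cnxpt,\cnxmt]$ satisfying \eqref{AT}, that lemma delivers pseudo-complementedness of $\m A$, and then \Cref{GLBAint} upgrades this to $\m A\in\GLBA{\FLew}$ because $\m A\in\FLew$. This yields the cycle (1)$\Leftrightarrow$(2), (2)$\Rightarrow$(3), (3)$\Rightarrow$(1), and hence the full equivalence. I do not expect any real obstacle here; the conceptual point worth flagging is that, unlike the integral non-$0$-bounded case of \Cref{FLeSPC}, where Boethius' thesis is needed to pin the expansive map down to $\dm$ and to recover the full Glivenko condition, under weakening the surviving obligation is merely pseudo-complementedness, and \Cref{TauIsDm}(2) shows that Aristotle's thesis alone already forces it.
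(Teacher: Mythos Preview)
Your proposal is correct and matches the paper's approach essentially line for line: the paper's one-line proof simply says the result follows from \Cref{FLeSPC} ``in light of \Cref{GLBAint} and \Cref{TauIsDm}(2)'', and you have unpacked precisely that, using \Cref{GLBAint} (with \Cref{rem:spc-pc}) to identify condition~(1) with its integral analogue, \Cref{FLeSPC} for the equivalence with~(2), and \Cref{TauIsDm}(2) for the passage from~\eqref{AT} back to pseudo-complementedness.
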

\begin{example}
Note that the direction (4) to (1) in \Cref{thm: FLewPC} does not generally hold if $0$-boundedness is dropped. Indeed, Figure~\ref{fig:pc not spc} is an integral \FLeA-algebra satisfying Aristotle's thesis but not Boethius' thesis (take $x = 0$ and $y = \bot$) for $\cnxm$.
\end{example}

%
%

\subsection{Returning to the logics: The main results}In what follows, we put in good use \Cref{FLeLogics} to set substructural logics mimicking a connexive implication expressed in terms of $\cnxm$ and $\cnxp$ ``on the map''. First, as a consequence of \Cref{thm: charconn}, \Cref{meetlikearrows}, and \Cref{connexcon}, we obtain the following theorem.
\begin{theorem}\label[theorem]{LogCharFLe}
Let $\Lg$ be any substructural logic in the interval between $\mathbf{FL}_\mathbf{e}$ and $\IPL$. Then the following are equivalent:
\begin{enumerate}
\item $(\Lg,\cnxm)$ is a connexive logic.
\item $(\Lg,\cnxm)$ is a proto-connexive logic.
\item Any one of Boethius' theses for $\cnxm$ are theorems of $\Lg$.
\item $\Lg$ is an axiomatic exentision of $\mathbf{G}_{\mathbf{FL}_\mathbf{e}}(\CPL)$.
\end{enumerate}
The same holds for the connective $\cnxp$ if it is further assumed that $\varphi \under \neg\neg 1$ are theorems of $\Lg$.
\end{theorem}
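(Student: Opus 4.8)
The plan is to pass entirely to the algebraic side through the Algebraization Theorem (\Cref{FLeLogics}) and then quote the structural results already proved about $\GLBA{\FLe}$. Under the dual isomorphism between substructural logics and subvarieties of $\FLe$, the interval hypothesis $\mathbf{FL}_\mathbf{e}\leq \Lg\leq \IPL$ becomes $\mathsf{HA}\subseteq \V(\Lg)\subseteq \FLe$ (using $\mathsf{HA}=\V(\IPL)$), and, since $\GLBA{\FLe}=\mathbf{G}_\FLe(\BA)$ corresponds under the duality to the Glivenko logic $\mathbf{G}_{\mathbf{FL}_\mathbf{e}}(\CPL)$, clause (4) --- that $\Lg$ extends $\mathbf{G}_{\mathbf{FL}_\mathbf{e}}(\CPL)$ --- translates precisely into the containment $\V(\Lg)\subseteq \GLBA{\FLe}$.

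First I would establish the block $(2)\Leftrightarrow(3)\Leftrightarrow(4)$, all of which I expect to collapse to this single algebraic condition. By \Cref{FLeLogics}, proto-connexivity of $(\Lg,\cnxm)$ is literally proto-connexivity of $(\V(\Lg),\cnxm)$, which by \Cref{thm: charconn} holds iff $\V(\Lg)\subseteq \GLBA{\FLe}$; this settles (2). A Boethius thesis for $\cnxm$ being a theorem of $\Lg$ unwinds, again via \Cref{FLeLogics}, into $\V(\Lg)\models\eqref{BT}$ (resp. \eqref{BT'}), which by \Cref{thm: charconn} (through \Cref{BTequiv1}) is once more $\V(\Lg)\subseteq \GLBA{\FLe}$, giving (3); and (4) is this very containment by the remark above.

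Next I would close the cycle with (1). The implication $(1)\Rightarrow(2)$ is immediate, as connexivity includes proto-connexivity by definition. For $(2)\Rightarrow(1)$ the point is to produce a witness for \eqref{NS}, and here the lower bound $\mathsf{HA}\subseteq \V(\Lg)$ is essential: since $\V(\Lg)$ then contains every Heyting algebra and some of these fail to be Boolean (for instance the three-element chain), \Cref{cor:FLeNS} tells us that such an algebra makes $\cnxm$ non-symmetric, so \eqref{NS} holds in $\V(\Lg)$. Equivalently, one may invoke \Cref{connexcon} directly, which asserts that $\cnxm$ is connexive throughout the interval $[\mathsf{HA},\GLBA{\FLe}]$.

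Finally, for the $\cnxp$ clause I would translate the additional hypothesis: by \Cref{FLeLogics} and residuation, $\vdash_\Lg \varphi\under\neg\neg 1$ amounts to $\V(\Lg)\models x\leq \dm 1$, i.e. $\dm 1$ is the top element and $\dm\m A$ is integral for every member. Under this identity, \Cref{charconnProd} equates proto-connexivity of $\cnxp$ with that of $\cnxm$ (alternatively, \Cref{meetlikearrows} applies verbatim with $\cnx:=\cnxp$), so (2)--(4) reduce to $\V(\Lg)\subseteq \GLBA{\FLe}$ exactly as before, with non-symmetry again supplied by \Cref{connexcon}. The step I expect to require the most care is precisely this one: unlike for $\cnxm$, the identity $x\leq \dm 1$ is not forced by $\mathsf{HA}\subseteq \V(\Lg)$ (as $\V(\Lg)$ may contain non-integral algebras), which is exactly why the extra hypothesis on $\Lg$ cannot be dropped for $\cnxp$.
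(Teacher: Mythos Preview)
Your proposal is correct and follows essentially the same approach as the paper, which simply cites \Cref{thm: charconn}, \Cref{meetlikearrows}, and \Cref{connexcon} as yielding the result. You have just unpacked these references explicitly, including the translation via \Cref{FLeLogics} and the use of \Cref{charconnProd} (itself a corollary of \Cref{meetlikearrows}) for the $\cnxp$ clause.
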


%
Furthermore, in the presence of weakening, by \Cref{thm: FLewPC} the above theorem specializes:
\begin{theorem}\label[theorem]{LogCharFLew}
Let $\Lg$ be any logic in the interval between $\mathbf{FL}_\mathbf{ew}$ and $\IPL$. Then the following are equivalent for $\cnx\in\{\cnxm,\cnxp \}$:
\begin{enumerate}
\item $(\Lg,\cnx)$ is a connexive logic.
\item Any one of Aristotle's theses for $\cnx$ are theorems of $\Lg$.
\item $\Lg$ is an axiomatic extension of $\mathbf{G}_{\mathbf{FL}_\mathbf{ew}}(\CPL)$.
\end{enumerate}
\end{theorem}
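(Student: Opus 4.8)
The plan is to transfer everything to the algebraic side through the Algebraization Theorem (\Cref{FLeLogics}) and then read off the equivalence from the variety-level results already in hand. Writing $\V := \V(\Lg)$, the assumption that $\Lg$ sits between $\mathbf{FL}_\mathbf{ew}$ and $\IPL$ dualizes to $\mathsf{HA} \subseteq \V \subseteq \FLew$, while condition (3)---that $\Lg$ is an axiomatic extension of $\mathbf{G}_{\mathbf{FL}_\mathbf{ew}}(\CPL)$---dualizes to the inclusion $\V \subseteq \GLBA{\FLew}$ (recall $\BA = \V(\CPL)$, so that $\mathbf{G}_{\mathbf{FL}_\mathbf{ew}}(\CPL)$ is the $1$-assertional logic of the Glivenko variety $\GLBA{\FLew}$). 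Under this dictionary, (1) reads ``$(\V,\cnx)$ is connexive'' and (2) reads ``$\V$ satisfies \eqref{AT} (equivalently \eqref{AT'}) for $\cnx$''. I would then establish the cycle (1) $\Rightarrow$ (2) $\Rightarrow$ (3) $\Rightarrow$ (1).

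The step (1) $\Rightarrow$ (2) is immediate, since connexivity of $(\V,\cnx)$ entails proto-connexivity, and proto-connexivity includes \eqref{AT} by definition. For (2) $\Rightarrow$ (3), I would argue member-wise: fix $\m A \in \V$, so that $\m A \in \FLew$ and $(\m A,\cnx)$ satisfies \eqref{AT}. Because $\cnx \in \{\cnxm,\cnxp\} = \{\cnxmd,\cnxpd\}$ lies in the interval $[\cnxpt,\cnxmt]$ for the {\expansive} map $\pos = \dm$, this is exactly an instance of condition (3) of \Cref{thm: FLewPC}. That corollary then yields $\m A \in \GLBA{\FLew}$, and as $\m A$ was arbitrary we conclude $\V \subseteq \GLBA{\FLew}$, i.e., (3).

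For (3) $\Rightarrow$ (1), I would appeal directly to \Cref{connexcon}. From (3) and \Cref{GLBAisBigger} we have $\V \subseteq \GLBA{\FLew} \subseteq \GLBA{\FLe}$, which together with $\mathsf{HA} \subseteq \V$ places $\V$ in the interval between $\mathsf{HA}$ and $\GLBA{\FLe}$. \Cref{connexcon} then delivers (1) at once for both $\cnxm$ and $\cnxp$: proto-connexivity is the content of \Cref{thm: FLewPC} applied to $\V \subseteq \GLBA{\FLew}$, while the principle of non-symmetry \eqref{NS} is witnessed by any non-Boolean Heyting algebra in $\mathsf{HA} \subseteq \V$, which falsifies symmetry of $\cnx$ by \Cref{cor:FLeNS}. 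This closes the cycle.

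I do not anticipate a genuine obstacle, as the statement is essentially a specialization of \Cref{LogCharFLe} routed through the algebraization correspondence. The two points that warrant care are (a) the dualization of ``axiomatic extension of $\mathbf{G}_{\mathbf{FL}_\mathbf{ew}}(\CPL)$'' into the variety inclusion $\V \subseteq \GLBA{\FLew}$, and (b) the observation that, under weakening, the single hypothesis of one of \emph{Aristotle's} theses already triggers \Cref{thm: FLewPC}---this is precisely the strengthening over the general case of \Cref{LogCharFLe}, where \emph{Boethius'} theses are required instead.
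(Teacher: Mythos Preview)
Your proposal is correct and follows essentially the same route as the paper, which simply states that the result ``specializes'' from \Cref{LogCharFLe} via \Cref{thm: FLewPC}; you have spelled out that specialization explicitly through the algebraization dictionary and the cycle (1)$\Rightarrow$(2)$\Rightarrow$(3)$\Rightarrow$(1). One minor point: condition (3) of \Cref{thm: FLewPC} is stated only for \eqref{AT}, so to cover the \eqref{AT'} case you should cite \Cref{TauIsDm}(2) directly (both theses yield pseudo-complementation, hence membership in $\GLBA{\FLew}$ by \Cref{GLBAint}).
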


Interestingly enough, in the framework of $\mathbf{FL}_{\mathbf{ew}}$, the above result ``does justice'' to the idea that ``Aristotle’s Thesis is the cornerstone of the logics belonging to the family of so-called connexive logics'' (cf. \cite{Pizzi04}).\\

Since the variety $\mathsf{HA}$ of Heyting algebras is pseudo-complemented, we recover the following result from \cite{Falepa} as a particular instance of \Cref{LogCharFLew}.
\begin{corollary}
$(\mathsf{HA},\cnxm)$ is connexive, and therefore $(\IPL,\cnxm)$ is connexive.
\end{corollary}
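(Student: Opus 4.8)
The plan is to read the corollary off from \Cref{thm: FLewPC} together with the non-symmetry criterion \Cref{cor:FLeNS}, using only the standard fact that Heyting algebras are pseudo-complemented. First I would recall that every Heyting algebra is an \FLewA-algebra in which $\cdot$ coincides with $\meet$, and that it satisfies \eqref{pc}: since $\neg x = x\to 0$ and multiplication is meet, residuation gives $x\meet \neg x = x\meet (x\to 0)\leq 0$. By \Cref{GLBAint}, $\GLBA{\FLew}$ is axiomatized relative to $\FLew$ by \eqref{pc}, so this places every $\m A\in\mathsf{HA}$ in $\GLBA{\FLew}$; that is, $\mathsf{HA}\subseteq \GLBA{\FLew}$.

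For proto-connexivity I would then invoke \Cref{thm: FLewPC}. Each $\m A\in\mathsf{HA}$ satisfies condition (1) of that corollary (it is pseudo-complemented), so condition (2) applies: for every $\cnx\in[\cnxp,\cnxm]$, and in particular for $\cnx=\cnxm$ (the top endpoint of the interval), the pair $(\m A,\cnxm)$ is proto-connexive. As this holds for every member, $(\mathsf{HA},\cnxm)$ is proto-connexive in the sense of \Cref{def:protoconnexive}.

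It then remains to verify the principle of non-symmetry \eqref{NS}, which is the only step requiring a witness. Since $\mathsf{HA}\subseteq\GLBA{\FLew}\subseteq\GLBA{\FLe}$ by \Cref{GLBAisBigger}, and $\mathsf{HA}$ contains non-Boolean members (e.g.\ the three-element chain), \Cref{cor:FLeNS} yields that $\cnxm$ is not symmetric on any such member; concretely, on the three-element chain $0<a<1$ one has $\dm a = 1 \neq a$, so by \Cref{TauIsDm}(1), $1\cnxm a = a \neq \dm a = a\cnxm 1$. This exhibits \eqref{NS}, whence $(\mathsf{HA},\cnxm)$ is connexive. Finally, because $\mathsf{HA}=\V(\IPL)$, \Cref{def:protoconnexive} transfers connexivity from the variety to its $1$-assertional logic, giving that $(\IPL,\cnxm)$ is connexive.

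The main obstacle is essentially nil beyond the bookkeeping: the whole argument is a specialization of the already-established \Cref{thm: FLewPC} and \Cref{cor:FLeNS}, and the only genuinely new content is checking that $\mathsf{HA}$ satisfies \eqref{pc} (immediate) and that it contains a non-Boolean algebra to witness \eqref{NS} (also immediate via \Cref{TauIsDm}(1)).
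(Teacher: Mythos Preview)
Your proposal is correct and follows essentially the same route as the paper, which simply notes that $\mathsf{HA}$ is pseudo-complemented and invokes \Cref{LogCharFLew} (itself a packaging of \Cref{thm: FLewPC} and \Cref{connexcon}/\Cref{cor:FLeNS}). You have merely unwound that citation into its constituent steps, supplying the explicit witness for \eqref{NS} via \Cref{TauIsDm}(1), which is entirely in line with the paper's implicit argument.
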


As the connectives $\cdot$ and $\meet$ coincide in Heyting algebras, so too do the operations $\cnxm$ and $\cnxp$. We therefore conclude with the following corollary.

\begin{corollary}
The connectives $\cnxm$ and $\cnxp$ are connexive for every logic in the interval between $\mathbf{G}_{\mathbf{FL}_\mathbf{e}}(\CPL)$ and $\IPL$.
\end{corollary}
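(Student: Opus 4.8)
The plan is to read this corollary off from the variety-side result \Cref{connexcon} by transporting it along the dual isomorphism between substructural logics and subvarieties of $\FLe$ furnished by \Cref{FLeLogics}. Nothing genuinely new needs to be proved; the work is purely in matching up the endpoints of the two intervals and invoking the definition of connexivity for logics.

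First I would pin down the two endpoints on the variety side. Since $\CPL = \Lg(\BA)$, the defining property of Glivenko logics and varieties gives $\V(\mathbf{G}_{\mathbf{FL}_\mathbf{e}}(\CPL)) = \GLBA{\FLe}$, while $\V(\IPL) = \mathsf{HA}$ as recalled earlier. I would also check that the interval $[\mathbf{G}_{\mathbf{FL}_\mathbf{e}}(\CPL), \IPL]$ is well-defined: by the classical Glivenko theorem $\IPL$ has the Glivenko property relative to $\CPL$, so $\IPL$ is an extension of the least such logic $\mathbf{G}_{\mathbf{FL}_\mathbf{e}}(\CPL)$. Then, for an arbitrary logic $\Lg$ with $\mathbf{G}_{\mathbf{FL}_\mathbf{e}}(\CPL) \leq \Lg \leq \IPL$, I would apply the order-reversing map $\Lg \mapsto \V(\Lg)$ to obtain $\mathsf{HA} = \V(\IPL) \subseteq \V(\Lg) \subseteq \V(\mathbf{G}_{\mathbf{FL}_\mathbf{e}}(\CPL)) = \GLBA{\FLe}$; that is, $\V(\Lg)$ lies in the interval between $\mathsf{HA}$ and $\GLBA{\FLe}$. \Cref{connexcon} then yields at once that both $\cnxm$ and $\cnxp$ are connexive in $\V(\Lg)$, and by \Cref{def:protoconnexive} — under which $(\Lg, \cnx)$ is connexive exactly when $(\V(\Lg), \cnx)$ is — this gives that $(\Lg, \cnxm)$ and $(\Lg, \cnxp)$ are connexive, as required.

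The only points needing care are bookkeeping rather than mathematical: remembering that the logic-to-variety correspondence is order-\emph{reversing}, so that the stronger logic $\IPL$ maps to the smaller variety $\mathsf{HA}$, and correctly identifying $\V(\mathbf{G}_{\mathbf{FL}_\mathbf{e}}(\CPL))$ with $\GLBA{\FLe}$. In particular, I expect no separate obstacle for $\cnxp$: although the logic-level \Cref{LogCharFLe} needs the auxiliary scheme $\varphi \under \neg\neg 1$ in order to handle $\cnxp$, here every $\V(\Lg) \subseteq \GLBA{\FLe}$ automatically satisfies $x \leq \dm 1$ (since $\dm 1$ is the greatest element throughout $\GLBA{\FLe}$ by \Cref{GLBAbool}), so that assumption comes for free and \Cref{connexcon} already covers $\cnxp$ directly alongside $\cnxm$.
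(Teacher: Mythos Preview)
Your proposal is correct and follows essentially the same route as the paper: the corollary is stated there without explicit proof, as it is meant to follow immediately from \Cref{connexcon} via the algebraization duality of \Cref{FLeLogics}, and you have simply spelled out that passage carefully (including the identifications $\V(\IPL)=\mathsf{HA}$ and $\V(\mathbf{G}_{\mathbf{FL}_\mathbf{e}}(\CPL))=\GLBA{\FLe}$ and the order reversal). Your added remark that the auxiliary hypothesis $x\leq\dm 1$ for $\cnxp$ is automatic inside $\GLBA{\FLe}$ is accurate but unnecessary here, since \Cref{connexcon} already covers $\cnxp$ directly.
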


\section{Some philosophical considerations}\label{sec: philosophicalimplications}
As we have seen through the above sections, the implication $\cnxm$ plays a prominent role in our investigation. Indeed, its intrinsic interest (see p. \pageref{interpretcnxheyting}) depends not only on that it behaves as a connexive implication with a desirable formal behavior, but also on its particularly smooth interpretation. Section \ref{sec:Polya} is devoted to deepening such a reading, highlighting interesting links between $\cnxm$ and the theory of plausible reasoning. 
Subsequently, we will put into good use the machinery developed so far in order to highlight some interesting features of $\cnxm$. The last part of this section provides an investigation of two variants of connexivity, namely \emph{weak} and \emph{strong} connexivity. Interestingly enough, we will show that, when dealing with $\FLew$, these concepts can be regarded as one and the same thing. Furthermore, we will point out that,  for any substructural logic, the strong connexivity of $\cnxm$ and $\cnxp$ is formally embodied by a well known inference schema of intuitionistic logic: ex falso quodlibet.   

\subsection{A focus on $\cnxm$}\label{sec:Polya}
In the second volume of his famous \emph{Mathematics and plausible reasoning} \cite{Polya}, George Polya aims at formulating several \emph{patterns} of plausible reasoning explicitly. Among them, he investigates an inference schema ``which is of so general use that we could extract it from almost any example'' (\cite[vol.2, p.3]{Polya}). Let $A$ be some clearly formulated conjecture which is, at present, neither proved, nor refuted. Also, let $B$ be some consequence of $A$ which we have neither proved, nor refuted, as well. For example, if we set $A$ to be Goldbach's conjecture 
\begin{center}
\emph{every even natural number greater than 2 is the sum of two prime numbers}, 
\end{center}
$B$ might be that $198388=p_1 + p_2$ for suitable primes $p_1$ and $p_2$. Indeed, although we do not know whether $A$ or $B$ is true, it is unquestionable that \emph{$A$ implies $B$}. We verify $B$. If $B$ turns out to be false, then performing \emph{modus tollens} we can conclude that $A$ is false as well. Otherwise, if we recognize that $B$ is true (and this is the case), then, although we do not have a proof of $A$, we can nevertheless conclude that $A$ is \emph{more credible}. In other words, we have applied the following \emph{fundamental inductive pattern} (in brief ``inductive pattern''  \cite[vol.2, p.4]{Polya}) or \emph{heuristic syllogism}:
\begin{center}
\AxiomC{$A$ implies $B$}
\AxiomC{$B$}
\BinaryInfC{$A$ is more credible}
\DisplayProof
\end{center}
As stated in \cite[p.34]{Polya1949}, a reasonable \emph{logic of plausible inference} should a) be general enough to include the use of inductive (in a broad sense) reasoning in mathematics; b) include the heuristic syllogism among its inference rules; and c) be fully qualitative, in the sense that ``[...] it is not possible to give a numerical value to the degree of credence attached to any statement considered''. If one takes into account the possibility of developing this proposal in a \emph{monotonic} setting, then it is reasonable to interpret ``$x$ is more credible'' (or ``$x$ is likely to be true'') as a modal operator.  Therefore, it becomes worthy of attention considering expansions of $\mathbf{FL}_\mathbf{e}$ in which assertions on ``plausibility'',  as well as the inductive pattern, are amenable of a formal treatment. 

To this aim, and to motivate a formal account of the notion of plausibility, one might rely on the following assumptions:
\begin{itemize}
\item[$\blacklozenge$1] If $\neg A$ is more credible, then $A$ is not;
\item[$\blacklozenge$2] If $\neg A$ is false, then $A$ is more credible.
\item[$\blacklozenge$3] If $A$ is equivalent to $B$, then $A$ is more credible if and only if $B$ is.
\end{itemize}
Note that $\blacklozenge$1 encodes a weak version of Polya's principle that ``non-$A$ more credible'' is equivalent to ``$A$ less credible'' (see \cite[vol. 2, p.23]{Polya}).

Let us consider then the language $\Lng=\{\land,\lor,\cdot,\rightarrow,\blacklozenge,0,1\}$, where $\blacklozenge$ is a unary (modal) operator. We will denote the absolutely free algebra over $\Lng$ generated by an infinite countable set of variables by $\mathbf\Fm_{\Lng }$. The formula $\blacklozenge\varphi$ will be read as ``$\varphi$ is more credible/plausible/likely to be true''. 
In order to include $\blacklozenge$1-$\blacklozenge$3 in our formal system, we consider the expansion $\vdash_{\LgFLeB }\subseteq\wp(\Fm_{\Lng})\times \Fm_{\Lng}$ of $\vdash_{\mathbf{FL}_\mathbf{e}}$ by the following axioms and inference schemas concerning the behavior of $\blacklozenge$:
\begin{enumerate}[{$\blacklozenge$}1:]
\item $\vdash_{\LgFLeB } \blacklozenge\neg\varphi\to\neg\blacklozenge\varphi$;
\item  $\vdash_{\LgFLeB } \neg\neg\varphi\to\blacklozenge\varphi$;
\item $\varphi\leftrightarrow\psi\vdash_{\LgFLeB }\blacklozenge\varphi\leftrightarrow\blacklozenge\psi$.
\end{enumerate}
Due to $\blacklozenge$3, it follows from general facts concerning algebraizable logics (see e.g. \cite[Proposition 3.31]{Font16}) that $\vdash_{\LgFLeB}$ is algebraizable. Its equivalent algebraic semantics is the variety $\FLeB$  of \FLeBA-algebras whose members are structures of the form $(A,\land,\lor,\cdot,\to,\blacklozenge,0,1)$ where $(A,\land,\lor,\cdot,\to,0,1)$ is an \FLeA-algebra, and the following identities hold:
\begin{itemize}
\item[A1:]  $\blacklozenge\neg x\leq\neg\blacklozenge x$;
\item[A2:] $\neg\neg x\leq\blacklozenge x.$
\end{itemize}
As a consequence, we have the following
\begin{proposition}
Any \FLeBA-algebra satisfies $\neg\neg x \approx \blacklozenge x$.
\end{proposition}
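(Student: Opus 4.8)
The plan is to establish the one nontrivial inequality $\blacklozenge x \leq \neg\neg x$, since its converse $\neg\neg x \leq \blacklozenge x$ is precisely axiom A2. The argument uses only two standard features of negation in $\FLe$ recorded earlier: that $\neg$ is order-reversing with $\neg x \eq \neg\neg\neg x$, and the contraposition law $a \leq \neg b \iff b \leq \neg a$, which follows from commutativity and residuation (indeed $a\leq \neg b \iff a\cdot b \leq 0 \iff b \leq \neg a$). No integrality, weakening, or involutivity is needed, so the identity will hold throughout $\FLeB$.

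First I would instantiate A2 with $\neg x$ in place of $x$, obtaining $\neg\neg\neg x \leq \blacklozenge \neg x$; since $\neg x \eq \neg\neg\neg x$ in every $\FLe$-algebra, this reads $\neg x \leq \blacklozenge \neg x$. Chaining this with A1, namely $\blacklozenge \neg x \leq \neg \blacklozenge x$, yields $\neg x \leq \neg \blacklozenge x$.

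Next I would apply the contraposition law with $a \coloneq \neg x$ and $b \coloneq \blacklozenge x$: from $\neg x \leq \neg \blacklozenge x$ we conclude $\blacklozenge x \leq \neg\neg x$, which is exactly the missing direction. Combining this with A2 gives $\blacklozenge x \eq \neg\neg x$, i.e. $\neg\neg x \approx \blacklozenge x$, as claimed.

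The computation is very short, so there is no serious technical obstacle; the only point requiring a moment's thought is the strategic one of using A2 in its substituted form at $\neg x$ (together with $\neg x \eq \neg\neg\neg x$) rather than at $x$, and then recognizing that the antitone Galois relationship of $\neg$ converts the derived inequality $\neg x \leq \neg\blacklozenge x$ into the target $\blacklozenge x \leq \neg\neg x$.
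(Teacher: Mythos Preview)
Your proof is correct and follows essentially the same route as the paper. The paper writes the chain $\neg x \geq \neg\blacklozenge x \geq \blacklozenge\neg x \geq \neg\neg\neg x = \neg x$, concludes $\neg x = \neg\blacklozenge x$, and then applies $\neg$ (together with $\blacklozenge x \leq \neg\neg\blacklozenge x$) to obtain $\neg\neg x \geq \blacklozenge x$; you isolate only the needed half $\neg x \leq \neg\blacklozenge x$ and pass to $\blacklozenge x \leq \neg\neg x$ via the Galois-style contraposition, which is a mild streamlining of the same argument.
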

\begin{proof}
 One has that $\neg x\geq \neg\blacklozenge x\geq \blacklozenge\neg x\geq \neg\neg\neg x= \neg x$. We conclude that $\neg x=\neg\blacklozenge x$. Consequently, we have $\neg\neg x=\neg\neg\blacklozenge x\geq\blacklozenge x$ and $\neg\neg x=\blacklozenge x$.
\end{proof}

Now, we observe that, in general, the following \emph{does not hold}:
\begin{equation}
\varphi\to\psi,\psi\vdash_{\LgFLeB }\blacklozenge\varphi, \label{failindpat}
\end{equation}
i.e., $\to$ does not satisfy the inductive pattern. However, 
it is easy to see that $\cnxm$ is \emph{the weakest} term-definable implication-like connective $\leadsto$ satisfying, for any formulas $\varphi,\psi\in \Fm_{\Lng}$, the following (stronger) axiomatic renderings of \emph{modus ponens}, and the heuristic syllogism
\begin{enumerate}[{S}1:]
\item $\vdash_{\LgFLeB }(\varphi\leadsto\psi)\to(\varphi\to\psi)$;
\item $\vdash_{\LgFLeB } (\varphi\leadsto\psi)\to(\psi\to\blacklozenge\varphi)$,
\end{enumerate}
where ``weakest'' here means that, for any term-definable binary connective $\leadsto$ satisfying S1-S2, we have that, for any $\varphi,\psi\in \Fm_{\Lng}$: $$\vdash_{\LgFLeB }(\varphi\leadsto\psi)\to(\varphi\cnxm\psi).$$

In light of the above discussion, it can be argued that, under Polya's \emph{desiderata} \cite{Polya1949}, $\cnxm$ might be considered as a reasonable candidate for formalizing the kind of conditionals which express a connection between a conjecture and one of its consequences in a monotonic framework. In fact, given that in any \FLeA-algebra $\m A$ it holds that
$$\begin{array}{r c l } 
x\cnxm y&=& (x\to y)\land (y\to\dm x)\\ 
&=& (x\to y)\land (\dm y\to\dm x)\\
&=& (x\to y)\land (\dm x\to\dm y)\land (\dm y\to\dm x)\\
&=& (x\to y)\land (\dm x\leftrightarrow\dm y),
\end{array}$$
$x\cnxm y$ can be read as ``$x$ implies $y$ and $x$ is plausible/more credible/likely to be true if and only if $y$ is'',  whenever ``it is plausible/more credible/likely to be true that $x$'' is meant to satisfy $\blacklozenge 1-\blacklozenge3$, and so it can be formalized by $\dm$. Note that, under such interpretation, the identity 
\begin{equation}
x\leq\dm \label{trueisplausible}1 
\end{equation}
seems to be a reasonable assumption once $1$ is meant as an absolutely true statement and so $\dm 1$ can be read as ``an absolutely true statement is plausible''. Therefore, if we confine ourselves to consider \FLeA-algebras satisfying \eqref{trueisplausible}, then \Cref{charconnProd} states a full equivalence between the connexivity of $\cnxm$ and $\cnxp$ (cf. p. \pageref{charconnPage}).\label{motivationdm1}

\subsection{On the strength of $\cnxm$}\label{sec: strength of cnxm}
Beside having particularly fair motivations, $\cnxm$ plays a special role for our discussion, since it enjoys interesting properties which are not shared by its product-variant $\cnxp$. Indeed, the connexivity of $\cnxm$ can be formulated in a stronger form.

\begin{proposition}\label[proposition]{lem: propcnxmd}Let $\m A$ be an \FLeA-algebra. Then $(\m A,\cnxm)$ satisfies \emph{(\ref{BT})} if and only if it satisfies: \[1\leq(x\cnx y)\cnx[(y\cnx z)\cnx\neg(x\cnx\neg z)].\label{BT*}\tag{BT*}\]
 \end{proposition}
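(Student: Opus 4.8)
The plan is to prove the two implications separately; the reverse implication \eqref{BT*}$\,\Rightarrow\,$\eqref{BT} is a one-line specialization, while the forward one needs the Glivenko machinery assembled earlier. Throughout I abbreviate $\cnx\coloneq\cnxm$. For \eqref{BT*}$\,\Rightarrow\,$\eqref{BT}, I would substitute $y\coloneq x$ in \eqref{BT*}, obtaining $1\leq (x\cnxm x)\cnxm R$, where $R\coloneq (x\cnxm z)\cnxm\neg(x\cnxm\neg z)$ is exactly the \eqref{BT}-instance in the variables $x,z$. By \Cref{charlanfcnxone}(1), $1\leq (x\cnxm x)\cnxm R$ forces $x\cnxm x\leq R$; since $1\leq x\cnxm x$ holds in every \FLeA-algebra, transitivity gives $1\leq R$, and renaming $z$ to $y$ yields \eqref{BT}. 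This direction uses nothing beyond \Cref{charlanfcnxone}(1) and the standing fact $1\leq x\cnxm x$.

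For \eqref{BT}$\,\Rightarrow\,$\eqref{BT*}, I would first invoke \Cref{BTequiv1} to secure that $(\m A,\cnxm)$ satisfies \eqref{P2} and \eqref{P3}, and \Cref{FLeCnxInGLBA} to place $\m A\in\GLBA{\FLe}$, so that $\dm\m A$ is a Boolean algebra by \Cref{GLBAbool}. The key auxiliary step is the identity $\dm(a\cnxm b)\eq \dm a\cnxm\dm b$: indeed, \eqref{P2} gives $\dm(a\cnxm b)=\neg(a\cnxm\neg b)$, then \eqref{P3} gives $\neg(a\cnxm\neg b)=a\cnxm\dm b$, and \Cref{charlanfcnxone}(2) gives $a\cnxm\dm b=\dm a\cnxm\dm b$. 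Moreover, for arguments $\dm a,\dm b$ that are fixed by $\dm$, one has $\dm a\cnxm\dm b=(\dm a\to\dm b)\meet(\dm b\to\dm a)=\dm a\bic\dm b$, i.e. exactly the biconditional computed in the Boolean algebra $\dm\m A$.

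With this in hand, write $W\coloneq (y\cnxm z)\cnxm\neg(x\cnxm\neg z)$. Using \eqref{P2} I would rewrite $\neg(x\cnxm\neg z)=\dm(x\cnxm z)$, so $W=(y\cnxm z)\cnxm\dm(x\cnxm z)$; since the second argument is fixed by $\dm$, \Cref{charlanfcnxone}(2) shows $W$ is itself fixed by $\dm$ and $W=\dm(y\cnxm z)\cnxm\dm(x\cnxm z)=\dm(y\cnxm z)\bic\dm(x\cnxm z)$. Applying the auxiliary identity to rewrite $\dm(y\cnxm z)=\dm y\bic\dm z$ and $\dm(x\cnxm z)=\dm x\bic\dm z$, and computing in the Boolean algebra $\dm\m A$ where $\bic$ is associative and commutative with $\dm z\bic\dm z=\dm 1$, I obtain $W=(\dm y\bic\dm z)\bic(\dm x\bic\dm z)=\dm x\bic\dm y=\dm(x\cnxm y)$. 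Finally, since $\dm W=W=\dm(x\cnxm y)$ and $x\cnxm y\leq\dm(x\cnxm y)=W$, \Cref{charlanfcnxone}(1) yields $1\leq (x\cnxm y)\cnxm W$, which is precisely \eqref{BT*}.

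The main obstacle is this forward direction, and specifically the collapse $W=\dm(x\cnxm y)$. The whole payoff of passing to $\GLBA{\FLe}$ is that $\cnxm$, restricted to elements fixed by $\dm$, coincides with the Boolean biconditional of $\dm\m A$, whose \emph{associativity} is what allows the two copies of $\dm z$ to cancel. The conceptual crux is therefore recognizing that \eqref{P2} and \eqref{P3} together upgrade to $\dm(a\cnxm b)\eq\dm a\cnxm\dm b$, reducing an opaque three-variable inequality to an associativity computation for $\bic$; by contrast, the converse implication is immediate once one hits upon the substitution $y\coloneq x$.
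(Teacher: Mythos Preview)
Your proof is correct and shares the same core idea as the paper's: both the reverse direction (via the substitution $y\coloneq x$) and the forward direction (via \Cref{BTequiv1}, membership in $\GLBA{\FLe}$, and a computation in the Boolean algebra $\dm\m A$) coincide in spirit.

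The organizational difference is minor but worth noting. The paper unpacks \eqref{BT*} as $1\leq r\cnx(s\cnx t)$ and splits it into the two inequalities (i) $r\leq s\cnx t$ and (ii) $s\cnx t\leq \dm r$; it verifies (i) by elementary residuated-lattice manipulations together with \eqref{P3}, reserving the Boolean step for (ii) only. You instead establish the single equality $W=\dm(x\cnxm y)$ directly, by recognizing that on $\dm$-fixed elements $\cnxm$ coincides with the Boolean biconditional of $\dm\m A$ and then exploiting its associativity to cancel the two occurrences of $\dm z$. This is a cleaner package---it yields (i) and (ii) simultaneously via \Cref{charlanfcnxone}(1)---at the cost of using the Boolean structure for the half that the paper handles by hand.
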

 \begin{proof}
Set $\cnx \coloneq \cnxm$. Concerning the left-to-right direction, observe that \eqref{BT*} has the form $1\leq r\cnx(s\cnx t)$, for suitable terms $r,s,t$, which holds iff (i) $r\leq s\cnx t$ and (ii) $s\cnx t \leq \dm r$ by the definition of $\cnx$. Clearly (i) holds iff $rs\leq t$ and $rt\leq \dm s$ hold, which are verified by simply using \Cref{RLfacts}, the fact that $\FLe\models a\to \dm b \eq \neg b\to \neg a $, and an application of (\ref{P3}), by \Cref{BTequiv1}, via $x\cnx z \leq \neg(x\cnx \neg z)$. Moreover, since $(\m A,\cnx)\models\mbox{\eqref{BT}}$, we have $\m A\in \GLBA{\FLe}$ by \Cref{thm: charconn}. The reader can easily verify that $\BA\models x\leftrightarrow y \eq (x\leftrightarrow z) \meet (z\leftrightarrow y)$. Therefore, using the Glivenko property, \Cref{GLBAisCnx}, and the fact that $\dm$ is a nucleus, it follows that property (ii) holds as well. 
Conversely, setting $y\coloneq  x$ in \eqref{BT*}, we derive $x\cnxm x\leq (x\cnxm z)\cnxm\neg(x\cnxm\neg z)$. Thus \eqref{BT} holds since $1\leq x\cnxm x$.
\end{proof}
\noindent We show the above proposition does not hold for $\cnxp$ with the following example. 
\begin{example}
Consider 
$A=\{\bot,0,a,b,c,1 \}$ with the lattice order via $\bot<0<b,c<a<1$. Let $\cdot$ be the commutative operation with unit $1$ described via: For all $x\in A$, $\bot$ is absorbing; $x\cdot x=x$ for $x\neq 0$; $0\cdot x = \bot$ for $x\neq 1$; $a\cdot x = x$ for $x=b,c$; and $b\cdot c = \bot$. It is not difficult to show that $\cdot$ is associative and distributes over joins and therefore, by general results (since the lattice is complete), has a residual operation $\under$. Hence $\m A = \langle A, \meet,\join,\cdot,\under,0,1 \rangle$ is an integral \FLeA-algebra, which is furthermore verified to be a member of $\GLBA{\FLei}$. Therefore, by \Cref{charconnProd} one has that $\cnxp$ is proto-connexive. However, noting that $\neg c=c\under b=b$, $\neg b=b\under c = c$, and $0\under \bot = a$, the failure of \eqref{BT*} is computed:  
$(0\cnxp 1)\cnxp((1\cnxp b)\cnxp\neg(0\cnxp c))=0\cnxp(b \cnxp \neg\neg c)=0\cnxp(b\cnxp c)=0\cnxp \bot=a\ngeq 1.$
\end{example}

Furthermore, we see that $\cnxm$ has a privileged status among arrows of the form $\cnxmt$ which can be defined over an \FLeA-algebra by means of an increasing map $\delta$. In fact, we show below that it is the \emph{only} operation of this form capable of satisfying connexive theses in the Glivenko variety relative to Boolean algebras.

\begin{proposition}\label[proposition]{lem: poseqdm} 
Let $\m A \in \GLBA{\FLe}$ and $\pos$ be an {\expansive} map on $\m A$. If $(\m A,\cnxmt)\models \mbox{\eqref{BT}}$ then $\pos = \dm$.
\end{proposition}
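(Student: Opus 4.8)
The plan is to exploit the structural information that $(\m A,\cnxmt)\models\eqref{BT}$ forces, using the general machinery built up for the interval $[\cnxpt,\cnxmt]$ even though here we work only with the top endpoint $\cnxmt$. The essential point is that $\cnxmt$ with an arbitrary increasing $\pos$ behaves enough like $\cnxm$ (the $\pos=\dm$ case) to recover $\pos = \dm$ by comparing the two at the unit and at $0$.

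First I would record the two evaluations $1\cnxmt x$ and $x\cnxmt 1$. Since $\m A\in\GLBA{\FLe}$ has $\dm 1$ as greatest element (by \Cref{GLBAbool}), the same computation as in \Cref{TauIsDm}(1) gives $1\cnxmt x = (1\under x)\meet(x\under\ppos 1)$ and $x\cnxmt 1 = (x\under 1)\meet(1\under\ppos x) = \dm 1 \meet \ppos x$. The increasing property $x\leq\ppos x$ together with $x\leq\dm 1$ should pin these down in a usable form. In particular, applying \eqref{BT} with the substitution that makes the antecedent trivial — just as in \Cref{TauIsDm}(3), using $y:=x$, so that $x\cnxmt x \leq (x\cnxmt z)\cnxmt\neg(x\cnxmt\neg z)$, or more directly evaluating $1 \leq (1\cnxmt x)\cnxmt\neg(1\cnxmt\neg x)$ — should yield $\dm x \leq \ppos x$ via residuation exactly as in the integral lemma. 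The non-integral twist is that I must carry $\dm 1$ (rather than $1$) through these residuations, but since $\dm 1$ is the top element and $a\under\dm 1 = \dm 1$ for all $a$, the bookkeeping is routine.

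For the reverse inequality $\ppos x\leq\dm x$, I would imitate the second half of \Cref{TauIsDm}(3): feed \eqref{BT} the instance with consequent-side $y:=1$, obtaining $1\leq(x\cnxmt 1)\cnxmt\neg(x\cnxmt\neg 1)$. Using $x\cnxmt 1 = \dm 1\meet\ppos x$ and $x\cnxmt\neg 1 = x\cnxmt 0 = (x\under 0)\meet(0\under\ppos x)$, the $\GLBA{\FLe}$-identity $\dm(x\under 0)=\dm\neg x = \neg x$ and the fact (from the first half) that $\dm x\leq\ppos x$ should collapse $x\cnxmt 0$ to $\neg x$ and hence $\neg(x\cnxmt\neg 1)$ to $\dm x$. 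Unwinding the outer $\cnxmt$ and applying residuation against $\dm 1$ should then deliver $\ppos x\leq\dm x$, giving $\pos=\dm$.

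The main obstacle I anticipate is that, unlike the integral setting of \Cref{TauIsDm}, I cannot assume $\dm\m A$ is integral as a hypothesis — it must instead be extracted from membership in $\GLBA{\FLe}$, where it holds by \Cref{GLBAbool}(2). So the real care goes into verifying that every residuation step that was ``free'' in the integral case (where the top element is $1$) goes through with $\dm 1$ as the top element, and that the simplifications of $x\cnxmt 0$ and $1\cnxmt x$ genuinely use only the $\GLBA{\FLe}$-identities (\Cref{GLBA}(1)) rather than integrality of $\m A$ itself. Once those evaluations are established the squeeze $\dm x\leq\ppos x\leq\dm x$ is immediate.
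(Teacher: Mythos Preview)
Your plan follows the paper's strategy in spirit (instantiate \eqref{BT} at $1$ and residuate), but there are two concrete problems.

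First, your evaluation $x\cnxmt 1 = (x\to 1)\meet\ppos x = \dm 1\meet\ppos x$ is wrong: in a non-integral $\GLBA{\FLe}$-algebra $x\to 1$ need not equal the top $\dm 1$. For a witness, take the algebra $A=\{\bot<1<a<0\}$ from the paper (idempotent, $0$ top), where $a\to 1=\bot$ while $\dm 1=0$. So your second-half computation, which relies on $x\cnxmt 1$ collapsing to $\ppos x$, does not go through as stated.

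Second, and more importantly, you never establish that $\ppos 1$ is the greatest element. This is \emph{the} step that makes the ``bookkeeping routine.'' In $1\cnxmt x=x\meet(x\to\ppos 1)$, the only thing you know is $1\leq\ppos 1\leq\dm 1$; without $\ppos 1=\dm 1$ you cannot reduce $1\cnxmt x$ to $x$, and since $\pos$ is merely increasing (not monotone) you cannot pass from $\ppos[x\meet(x\to\ppos 1)]$ to $\ppos x$. The paper handles this by first setting $x:=1$ in the inequality $\neg(1\cnxmt\neg x)\leq\ppos(1\cnxmt x)$, which yields $\dm 1=0\to 0\leq\ppos 1$ and hence $\ppos 1$ is top; only then does the general instance collapse to $\dm x\leq\ppos x$.

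For the reverse inequality the paper takes a different route than yours: it derives \eqref{AT} from \eqref{BT} (immediate since $1\leq x\cnxmt x$ once $\pos$ is increasing), then uses the $\GLBA{\FLe}$-identities $\neg(a\meet b)=\neg(a\cdot b)$ and $\neg(a\to b)=\dm a\meet\neg b$, together with the already-proven $\dm x\leq\ppos x$, to simplify $\neg(x\cnxmt\neg x)$ to $\ppos x\to\dm x$. Your $y:=1$ instance can also be made to work, but only after the corrected evaluation $x\cnxmt 1=(x\to 1)\meet\ppos x$ and a further application of $\neg(x\to 1)=\dm x\meet 0=0$; it is not the simple unwinding you suggest.
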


\begin{proof}
By \eqref{BT} we have $1\leq(1\cnxmt x)\cnxmt\neg(1\cnxmt\neg x)$, and from the definition of $\cnxmt$, it follows that $1\leq\neg(1\cnxmt\neg x)\under\pos(1\cnxmt x)$. 
Expanding this, we obtain
\begin{align*} 
 1&\leq \neg[(1\under\neg x)\land(\neg x\under\ppos{1})]\under\pos[(1\under x)\land(x\under\ppos{1})]\\
 &=\neg[\neg x\land(\neg x\under\ppos{1})]\under\pos[x\land(x\under\ppos{1})]\tag{$\ast$} \label{astaux}
\end{align*}
Setting $x\coloneq  1$, \eqref{astaux} yields $1\leq\neg[\neg 1\land(\neg 1\under\ppos{1})]\under\pos[1\land(1\under\ppos{1})]=(0\under0)\under\ppos{1}$, hence $0\to0\leq \ppos{1}$. Since $\m A \in \GLBA{\FLe}$ and $0\under 0$ is its greatest element, it follows that $0\under 0=\ppos{1}$. 
So $\ppos{1}$ is the greatest element of $\m A$, and thus the identity $y\leq y\under\ppos{1}$, in particular, holds in $\m A$.
Using this fact, \eqref{astaux} simplifies to $1\leq\neg\neg x\under\ppos{x}$, i.e. $\dm x\leq\ppos{x}$. Moreover, using the fact that $\pos$ is increasing and $\ppos{1}$ is the greatest element, it is easily deduced that $\cnxmt$ also satisfies \eqref{AT} as a consequence instantiating of $y\coloneq  x$ in \eqref{BT}. As $\m A \in \GLBA{\FLe}$, from \eqref{AT} we conclude
$$\begin{array}[b]{r c l} 
1&\leq& \neg(x\cnxmt\neg x)\\
&=& \neg((x\under\neg x)\land(\neg x\under\ppos{x}))\\
&=&\neg(\neg x\land\ppos{x})\\
&=&\neg(\neg x\cdot\ppos{x})\\
&=&\ppos{x}\under\dm x.
\end{array}
\qedhere
$$
\end{proof}

\noindent As a consequence of the above proposition, we have the following
\begin{corollary}\label[corollary]{cor: poseqdm} 
Let $\m A$ be an \FLeA-algebra. Then $(\m A,\cnxm)$ is proto-connexive if and only if it is the unique binary operation of the form $\cnxmt$ to be so.
\end{corollary}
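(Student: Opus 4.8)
The plan is to recognize that this corollary is a direct repackaging of the characterization in \Cref{thm: charconn} together with the rigidity result \Cref{lem: poseqdm}, so that essentially no new computation is required; the only care needed is in parsing the statement correctly. Reading ``$\cnxm$ is the unique binary operation of the form $\cnxmt$ to be proto-connexive'' as the conjunction of ``$(\m A,\cnxm)$ is proto-connexive'' and ``no {\expansive} $\pos$ other than $\dm$ yields a proto-connexive $\cnxmt$'', the reverse implication becomes immediate: if $\cnxm$ is the \emph{unique} such operation, then a fortiori it is one of them, i.e. $(\m A,\cnxm)$ is proto-connexive.

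For the forward direction I would argue as follows. Assume $(\m A,\cnxm)$ is proto-connexive. Since $\cnxm = \cnxmd$, this says precisely that $(\m A,\cnxmd)$ is proto-connexive, and I would first invoke \Cref{thm: charconn} to conclude $\m A\in\GLBA{\FLe}$. Now let $\pos$ be any {\expansive} map on $\m A$ such that $(\m A,\cnxmt)$ is proto-connexive. By definition of proto-connexivity this entails $(\m A,\cnxmt)\models\eqref{BT}$, and since we have already placed $\m A$ in $\GLBA{\FLe}$, \Cref{lem: poseqdm} applies and forces $\pos=\dm$. Hence $\cnxmt=\cnxmd=\cnxm$, so $\cnxmd$ is indeed the only operation of the form $\cnxmt$ that is proto-connexive, which is the desired uniqueness.

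I do not expect any genuine obstacle here: the real work has already been done in \Cref{thm: charconn} (which converts a Boethius thesis into the Glivenko-membership condition $\m A\in\GLBA{\FLe}$) and in \Cref{lem: poseqdm} (which pins down $\pos$ once that membership is known). The single delicate point is interpretive rather than mathematical, namely ensuring that ``the unique $\cnxmt$ to be proto-connexive'' is understood to assert both existence (that $\cnxm$ itself is proto-connexive) and uniqueness, so that the two directions line up. Once this reading is fixed, both implications fall out of the two cited results with no further calculation.
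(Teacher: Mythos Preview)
Your argument is correct and matches the paper's intended proof: the reverse implication is trivial (uniqueness entails existence), and for the forward direction you correctly invoke \Cref{thm: charconn} to place $\m A$ in $\GLBA{\FLe}$ and then apply \Cref{lem: poseqdm} to force $\pos=\dm$. The paper presents this as an immediate consequence of \Cref{lem: poseqdm}, leaving implicit the appeal to \Cref{thm: charconn} that you have made explicit.
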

\noindent The example below shows neither \cref{lem: poseqdm} nor the corollary above hold for $\cnxp$.
\begin{example}
Consider the lattice $A = \{\bot < 1 < a < 0 \}$, where $\cdot$ is commutative and idempotent with unit $1$, described via $\bot$ being absorbing and $0 \cdot x = 0$ for any $x\neq \bot$. It is easily checked that $\cdot$ has a residual $\to$ and hence $\m A = \langle A,\meet,\vee, \cdot,\to, 0 ,1 \rangle$ is an \FLeA-algebra. By \Cref{AllCnxInTriv}, $\cnxpt$ is proto-connexive in $\m A$ for any map $\pos$, in particular for $\dm$. Moreover, specifying $\pos$ to be the map defined via $\ppos{x}$ is $a$ if $x=a$, and $0$ otherwise, we see that $\pos$ is increasing (and idempotent) which differs from $\dm$.
\end{example}

A somewhat suggestive interpretation of the above result comes next. If one wants to expand \FLeA-algebras with a modal operator $\delta$ standing for `` being plausible/more credible/likely to be true'', then a reasonable, minimal condition that $\delta$ should satisfy is being extensive, since any true statement should be \emph{a fortiori} plausible/more credible/likely to be true. Now, if one interprets $\cnxmt$ as the kind of implication which might appear in Polya's heuristic syllogism, then it seems reasonable to assume that it satisfies connexive theses. In fact, one might notice that any \emph{conjecture} $A$ \emph{whose truth or falsity is unknown} should not have, among its consequences, both a statement $B$ and its negation $\neg B$ on pain of being \emph{a priori} false, a contradiction. Moreover, it is arguable that conditionals involved in heuristic syllogisms have ``epistemically possible'' antecedents (cf. p. \pageref{epistemically possible} and \cite{Kaps2020}). But then, in the light of \Cref{lem: poseqdm}, $\delta=\dm$, namely ``being plausible/more credible/likely to be true'' must be \emph{perforce} expressed by double negation. Since inquiring into the consequences of the above considerations is beyond the scope of the present work, we postpone them to future investigations.

\subsection{On weak connexitivity}\label{sec: weak connex}
In \cite{wansinghunter}, a weaker notion of connexivity is formulated. A logic endowed  with a binary and a unary connective $\cnx$ and $\rneg$, respectively, is called \emph{weakly connexive} if it satisfies Aristotle's theses and the following two \emph{weak} versions of Boethius theses:
\begin{align*}
A\cnx B &\vdash \rneg(A\cnx \rneg B)\\
A\cnx \rneg B &\vdash \rneg(A\cnx B)
\end{align*}

Clearly, in the light of \Cref{FLeLogics}, the above inference schemas hold in a substructural logic w.r.t. (term defined) connectives $\cnx$ and $\rneg$ if and only if the following quasi-identities, which we call \emph{equational weak Boethius theses}, hold:
\begin{align*}
1\leq x\cnx y &\quad\mathsf{implies}\quad1\leq \neg (x\cnx \neg y) \tag{BTw}\label{BTw}\\
1\leq x\cnx \neg y &\quad\mathsf{implies}\quad1\leq \neg (x\cnx  y) \tag{BTw'}\label{BTw'}\\
\end{align*}

We will use the same naming convention for weak connexivity as in (and in the same spirit of) \Cref{def:protoconnexive} for logics and (classes of) algebras.

\begin{lemma}\label[lemma]{wkBTlem}
For an \FLeA-algebra $\m A$ and $\cnx\in \{\cnxp,\cnxm \}$, the following hold: 
\begin{enumerate}
\item $(\m A,\cnx) \models \eqref{BTw}$ iff $(\m A,\cnx)\models \eqref{BTw'}$. 
\item 
If $(\m A,\cnxm) \models \eqref{BT}$ [or \eqref{BT'}] then $(\m A,\cnxm)\models \eqref{BTw}$ and \eqref{BTw'}.
\item If $\dm \m A$ is integral, then $(\m A,\cnxp) \models \eqref{BT}$ [or \eqref{BT'}] implies $(\m A,\cnxp)\models \eqref{BTw}$ and \eqref{BTw'}.
\item If $(\m A,\cnx) \models \eqref{BTw}$ then $(\m A,\cnx)\models \eqref{AT}$ and \eqref{AT'}.
\end{enumerate}
Consequently, $\cnx$ is weakly connexive for a class $\K$ of $\FLe$-algebras if and only if at least one of the weak Boethius theses holds for $\cnx$ in $\K$.
\end{lemma}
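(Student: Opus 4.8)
The plan is to prove the four numbered items, then derive the final ``Consequently'' claim from items (1) and (4). Item (1) establishes that the two weak Boethius theses are interderivable, so a class satisfies one iff it satisfies the other; item (4) shows that either weak thesis already forces both Aristotle's theses. Together these mean that ``weakly connexive'' (which requires Aristotle's theses \emph{and} both weak Boethius theses) collapses to ``satisfies at least one weak Boethius thesis,'' which is exactly the final sentence.

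For item (1), I would argue directly from the definitions. The key observation is that $\neg y$ ranges over the same image issues as $y$ only up to double negation, so the two theses are \emph{not} literally symmetric under $y \mapsto \neg y$; instead I expect to use \Cref{charlanfcnxone}(1) for $\cnxm$, which says $1 \leq x \cnxm y$ iff $x \leq y$ and $\dm x = \dm y$. Under this characterization, the hypothesis $1 \leq x \cnx y$ becomes $x \leq y$ with $\dm x = \dm y$, and I would compute that $\neg(x \cnx \neg y)$ reduces to $\dm(x \cnx y)$ via \eqref{P2}-type manipulations, checking that the condition is preserved when $y$ is replaced by $\neg y$ throughout. For $\cnxp$ I expect to reduce to the $\cnxm$ case using \Cref{WhenDMisInt}(2) where $\dm \m A$ is integral, or to argue more directly. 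Item (4) should follow by instantiating $y \coloneq x$ in \eqref{BTw}: since $1 \leq x \cnx x$ holds in every \FLeA-algebra (noted just before \Cref{CnxOrdProp}), the hypothesis of \eqref{BTw} is automatically met, yielding $1 \leq \neg(x \cnx \neg x)$, which is exactly \eqref{AT}; the primed version \eqref{AT'} follows symmetrically, perhaps via item (1) together with the substitution giving \eqref{BTw'}.

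Items (2) and (3) are the ``strong implies weak'' directions and should be comparatively routine. For (2), if $(\m A,\cnxm)\models\eqref{BT}$ then by \Cref{BTequiv1} it is proto-connexive and satisfies \eqref{P2}, i.e. $\dm(x\cnxm y)\eq\neg(x\cnxm\neg y)$; from $1 \leq x \cnxm y$ I get $1 \leq \dm(x\cnxm y) = \neg(x\cnxm \neg y)$, which is \eqref{BTw}, and \eqref{BTw'} follows by item (1). For (3) the extra hypothesis that $\dm\m A$ is integral lets me invoke \Cref{WhenDMisInt}(2) to relate $\cnxp$ and $\cnxm$, or apply \Cref{meetlikearrows} so that \eqref{BT} for $\cnxp$ yields \eqref{P2} for $\cnxp$, after which the same computation as in (2) goes through.

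The main obstacle I anticipate is item (1) in the general (non-integral) setting, and specifically getting it to hold for $\cnxp$ without additional assumptions: the earlier examples built on $\Z(n)$ (\Cref{ex:Z(n)}) show that $\cnxp$ can satisfy \eqref{BT} while badly failing \eqref{P2} and even \eqref{AT}, so the weak-thesis equivalence for $\cnxp$ cannot rely on proto-connexivity and must be squeezed out of the bare definition $x \cnxp y = (x\under y)\cdot(y\under \dm x)$ together with the behavior of $\neg$ on such products. I would handle this by unfolding $1 \leq x \cnxp \neg y$ into the conjunction $1 \leq x \under \neg y$ and $1 \leq \neg y \under \dm x$ — equivalently $x \leq \neg y$ and $\dm x \leq \dm\neg y = \neg\neg\neg y = \neg y$ — and then verifying that $\neg(x \cnxp y) \geq 1$ follows, exploiting that $x \leq \neg y$ gives $xy \leq 0$ hence $\neg(xy)=\dm1$ and tracking the double negations carefully; the symmetric direction should mirror this. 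If a clean uniform argument resists, the fallback is to treat $\cnxm$ and $\cnxp$ separately, leaning on \Cref{charlanfcnxone} for the former.
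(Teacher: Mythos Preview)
Your plans for items (2), (3), and (4) are fine and match the paper's approach: (4) follows from $1\leq x\cnx x$ and $1\leq \neg x\cnx \neg x$ together with (1); and for (2)--(3) reducing to \eqref{P2} via \Cref{BTequiv1} and \Cref{meetlikearrows} respectively is exactly what makes the weak theses immediate.

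The gap is in item (1), specifically your fallback for $\cnxp$. You write that you would ``unfold $1 \leq x \cnxp \neg y$ into the conjunction $1 \leq x \under \neg y$ and $1 \leq \neg y \under \dm x$.'' This is false in general: $1 \leq a\cdot b$ does \emph{not} imply $1\leq a$ and $1\leq b$ in an arbitrary \FLeA-algebra (e.g., in $\Z(0)$ take $a=5$, $b=-5$). So the entire computation you sketch after that point never gets off the ground. For $\cnxm$ your plan via \Cref{charlanfcnxone} could be made to work, but it still implicitly needs the monotonicity step you do not mention, and it does not transfer to $\cnxp$.

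The paper's proof of (1) avoids all of this with a single uniform observation: by \Cref{CnxOrdProp} one has $x\cnx y \leq x\cnx \dm y$ for \emph{both} $\cnxm$ and $\cnxp$, with no integrality or proto-connexivity assumed. Then \eqref{BTw}$\Rightarrow$\eqref{BTw'}: if $1\leq x\cnx \neg y$, apply \eqref{BTw} with $\neg y$ in the role of $y$ to get $1\leq \neg(x\cnx \dm y)\leq \neg(x\cnx y)$. Conversely, if $1\leq x\cnx y\leq x\cnx \dm y$, apply \eqref{BTw'} with $\neg y$ in the role of $y$ to get $1\leq \neg(x\cnx \neg y)$. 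No case split, no \Cref{charlanfcnxone}, no unfolding of products.
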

\begin{proof} Fix $\cnx\in \{\cnxp,\cnxm \}$ and let $*\in\{\cdot,\meet\}$. First, note that, as a consequence of first quasi-identity in \cref{CnxOrdProp},  \FLeA-algebras satisfy the following identity, 
\begin{equation}\label{wkcnxcomp}
x\cnx y \leq x\cnx \neg\neg y
\end{equation}
Now, for (1), suppose \eqref{BTw} holds and let $1\leq x\cnx \neg y$. Then by \eqref{BTw}, $1\leq \neg(x\cnx \neg\neg y)$. But $\neg(x\cnx \neg\neg y) \leq \neg(x\cnx y)$ by Eq.~\eqref{wkcnxcomp}, so \eqref{BTw'} holds. Conversely, suppose \eqref{BTw'} holds. If $1\leq x\cnx y$, then again by Eq.~\eqref{wkcnxcomp}, so too $1\leq x\cnx \neg\neg y$. Hence by \eqref{BTw'}, $1\leq \neg (x\cnx \neg y)$.

For (2) and (3), it is sufficient to verify that $(\m A,\cnxm) \models \eqref{BT}$ implies $(\m A,\cnxm)\models \eqref{BTw}$, in light of (1), \cref{BTequiv1} [for the case of (2)], and \cref{meetlikearrows} [for the case of (3)]. Indeed, suppose $1\leq a:=x\cnx y$. Let $b:=\neg (x\cnx \neg y)$. Then by \eqref{BT}, using antitonicity of the left argument for $\under$ and \cref{dblr}, we find
$$1\leq a\cnx b = (a\under b)*(b\under \dm a)\leq b*(b\under \dm a)=\dm b * \dm(b\under \dm a)\leq \dm b \meet (b\under \dm a)\leq \dm b=b, $$
so \eqref{BTw} holds. This completes (2) and (3). 

Lastly, (4) is immediate by (1) and the fact $1\leq x\cnx x$ and $1\leq \neg x\cnx \neg x$.
\end{proof}

It naturally rises the question if a converse of \Cref{wkBTlem}(2) can be proven, namely if weak connexivity is equivalent to connexivity. Unfortunately, the next example shows that, even under the assumption of integrality, the former concept is properly weaker than the latter. 
\begin{example}\label{example:weak connexivity does not imply conn}
Consider an arbitrary Heyting algebra $\m A=(A,\land,\lor,\to,0,1)$ containing an element $a$ such that $b=\neg a\to a\neq a$, e.g. a three-elements chain with operations defined in the expected way. Let $\m A^{*}$ be the algebra obtained from $\m A$ by setting $0:=a$. It is easily seen that $\m A^{*}$ is still a pointed commutative residuated lattice with a bottom element $\bot$ satisfying $\land=\cdot$. Moreover, one has that $1\leq x\cnxm y$ implies $x\leq y$ and $\neg x=\neg y$. So $\neg(x\cnxm\neg y)=\neg((x\under\neg y)\land(\neg y\under\dm x))=\neg(\neg x\land\dm x)=\neg 0 = 1.$ Therefore, $\m A^{*}$ satisfies \eqref{BTw} and, by \Cref{wkBTlem}, also \eqref{BTw'}, i.e. $\m A^{*}$ is weakly connexive. However, one has also e.g. $\neg(0\cnxm\bot) = \neg ((0\under\bot)\land(\bot\under\dm 0))=b\neq a=((0\under 1)\land 1\cnx\dm 0)=0\cnxm 1=0\cnxm\neg\bot$. Therefore, since \eqref{P3} fails, by \Cref{BTequiv1} we have that $(\m A^{*},\cnxm)$ is not proto-connexive.
\end{example}

However, with the assumption of weakening, 
\Cref{wkBTlem} and \Cref{thm: FLewPC} yield the equivalence below. First, for a variety $\V$ of \FLeA-algebras and term-definable connective $\cnx$, let us denote by $Q_w(\V)$ the quasi-variety axiomatized relative to $\V$ by the equational Aristotle's and weak Boethius' theses.\\ 

\begin{theorem}\label[theorem]{thm:weaklyconnexiveinflew}
If $\K$ is a class of \FLewA-algebras and $\cnx\in \{\cnxp,\cnxm \}$, then $\cnx$ is weakly connexive for $\K$ if and only if $\cnx$ is proto-connexive for $\K$. Moreover, for any subvariety of $\V\subseteq \FLew$, $Q_w(\V)$ is a variety, namely the variety $\V+\eqref{AT}$.
\end{theorem}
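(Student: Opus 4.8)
The plan is to assemble both assertions from two facts already in hand: the structural lemma \Cref{wkBTlem}, which ties the weak Boethius theses to Aristotle's theses, and \Cref{thm: FLewPC}, whose real content is that over $\FLew$ a single instance of Aristotle's thesis already forces full proto-connexivity. The conceptual engine is precisely this collapse---under weakening, \eqref{AT} is not genuinely weaker than \eqref{BT}. A preliminary observation I would record is that every $\m A\in\FLew$ has $\dm\m A$ integral: since $1$ is the greatest element and $\dm$ is increasing, $1\leq\dm1\leq 1$ gives $\dm1=1$. This is what licenses the use of \Cref{wkBTlem}(3) for $\cnxp$ and, via the increasing map $\pos:=\dm$, the use of \Cref{thm: FLewPC}, whose interval $[\cnxpt,\cnxmt]$ then becomes $[\cnxp,\cnxm]$ with $\cnxp,\cnxm$ as its endpoints.

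For the biconditional I would argue member-wise over $\K$. Assume first that $\cnx$ is weakly connexive for $\K$. By the concluding clause of \Cref{wkBTlem}, each $\m A\in\K$ then satisfies a weak Boethius thesis, hence \eqref{BTw}, and by \Cref{wkBTlem}(4) it satisfies \eqref{AT}. Since $\cnx\in\{\cnxp,\cnxm\}$ is an endpoint of $[\cnxp,\cnxm]$ and $\pos:=\dm$ is increasing, the hypothesis of \Cref{thm: FLewPC}(3) is met, so (3)$\Rightarrow$(2) makes every operation in $[\cnxp,\cnxm]$---in particular our $\cnx$---proto-connexive on $\m A$. Conversely, if $\cnx$ is proto-connexive for $\K$, then each $\m A\in\K$ satisfies \eqref{BT}; \Cref{wkBTlem}(2) (for $\cnxm$) or (3) (for $\cnxp$, using $\dm\m A$ integral) then yields \eqref{BTw} and \eqref{BTw'}, while \eqref{AT} and \eqref{AT'} are part of proto-connexivity, so $\cnx$ is weakly connexive.

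For the ``Moreover'' claim I would prove $Q_w(\V)=\V+\eqref{AT}$ by two inclusions. The inclusion $Q_w(\V)\subseteq\V+\eqref{AT}$ is immediate, since by definition every member of $Q_w(\V)$ lies in $\V$ and satisfies the identity \eqref{AT}. For the reverse, let $\m A\in\V+\eqref{AT}$; then $\m A\in\V\subseteq\FLew$ and $(\m A,\cnx)\models\eqref{AT}$, so \Cref{thm: FLewPC}(3)$\Rightarrow$(2) (again with $\pos:=\dm$) makes $(\m A,\cnx)$ proto-connexive, whence---by the biconditional just established, or directly by \Cref{wkBTlem}(2,3,4)---it satisfies the equational Aristotle's theses and the weak Boethius theses \eqref{BTw},\eqref{BTw'}. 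Thus $\m A\in Q_w(\V)$. Since $\V+\eqref{AT}$ arises from a variety by imposing the single identity \eqref{AT}, it is a variety, and the identification $Q_w(\V)=\V+\eqref{AT}$ shows the quasi-variety $Q_w(\V)$ is one as well.

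The argument is an assembly of prior results rather than a new computation, so the only delicate point---and the step I would verify with care---is the legitimate invocation of \Cref{thm: FLewPC}: one must confirm that the fixed $\cnx\in\{\cnxp,\cnxm\}$ really is a member of $[\cnxpt,\cnxmt]$ for the increasing choice $\pos:=\dm$, so that its satisfying \eqref{AT} alone suffices to trigger proto-connexivity. Everything else is routine bookkeeping with the equivalences in \Cref{wkBTlem}.
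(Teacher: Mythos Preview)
Your proposal is correct and follows exactly the route the paper indicates: the paper simply states that the theorem follows from \Cref{wkBTlem} and \Cref{thm: FLewPC}, and you have spelled out the details of that assembly (including the preliminary observation that $\dm\m A$ is integral for $\m A\in\FLew$, needed to invoke \Cref{wkBTlem}(3) for $\cnxp$). There is nothing substantively different in approach.
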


Let $\cnx\in \{\cnxp,\cnxm \}$ and $\mathbf{C}_\cnx^{wk}$ be the extension of $\mathbf{FL}_\mathbf{ew}$ by the weak Boethius' theses for $\cnx$. By algebraization and the theorem above, this logic is deductively equivalent to the extension of $\mathbf{FL}_\mathbf{ew}$ by any weak Boethius' thesis for $\cnx$. Note that $\IPL$ satisfies the weak Boethius theses for $\cnx$ since $(\mathsf{HA},\cnx)\models\eqref{AT}$. By well known facts on algebraization, \Cref{thm:weaklyconnexiveinflew} yields the following 
\begin{corollary}
Let $\cnx\in \{\cnxp,\cnxm \}$. Then, for any substructural logic $\Lg$ in the interval between $\mathbf{FL}_\mathbf{ew}$ and $\IPL$, $(\Lg,\cnx)$ is weakly connexive if and only if $(\Lg,\cnx)$ is connexive. Moreover, every logic between $\mathbf{C}_\cnx^{wk}$ and $\IPL$ is connexive for $\cnx$.
\end{corollary}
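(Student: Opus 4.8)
The plan is to pass to the algebraic side through the Algebraization Theorem (\Cref{FLeLogics}) and reduce both assertions to \Cref{thm:weaklyconnexiveinflew}. Fix $\cnx\in\{\cnxp,\cnxm\}$ and a substructural logic $\Lg$ with $\mathbf{FL}_\mathbf{ew}\subseteq\Lg\subseteq\IPL$. By the dual isomorphism between substructural logics and subvarieties of $\FLe$, the equivalent algebraic semantics $\V:=\V(\Lg)$ is a variety of \FLewA-algebras satisfying $\mathsf{HA}\subseteq\V\subseteq\FLew$. Since $\cnx$ and $\neg$ are term-definable in the language of $\FLe$, the transformer $\tau(\varphi)=\{1\leq\varphi\}$ turns the connexive laws and their weak versions into (quasi-)identities; hence ``$(\Lg,\cnx)$ is connexive'' says that $\V$ is proto-connexive and satisfies \eqref{NS}, while ``$(\Lg,\cnx)$ is weakly connexive'' says that $\V$ models the equational Aristotle's and weak Boethius' theses (and satisfies \eqref{NS}).

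First I would dispose of the non-symmetry clause, which is common to both notions. As $\Lg\subseteq\IPL$ we have $\mathsf{HA}\subseteq\V$, and by \Cref{connexcon} the connective $\cnx$ is connexive on $\mathsf{HA}$; in particular some Heyting algebra, being a member of $\V$, falsifies symmetry, so \eqref{NS} holds in $\V$ for every $\Lg$ in the interval. The non-symmetry requirement is therefore automatic, and the biconditional collapses to its universal core: $\V$ models the weak Boethius' (equivalently, by \Cref{wkBTlem}(4), also Aristotle's) theses if and only if $\V$ is proto-connexive. Since $\V$ is a class of \FLewA-algebras, this is exactly \Cref{thm:weaklyconnexiveinflew}, which yields the first biconditional.

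For the ``moreover'' clause I would argue as follows. Given $\mathbf{C}_\cnx^{wk}\subseteq\Lg\subseteq\IPL$ and recalling that $\mathbf{C}_\cnx^{wk}$ is $\mathbf{FL}_\mathbf{ew}$ enriched by the weak Boethius' theses, every such $\Lg$ extends $\mathbf{FL}_\mathbf{ew}$ and proves the weak Boethius' theses (hence, by \Cref{wkBTlem}(4), Aristotle's theses too); moreover $\mathbf{FL}_\mathbf{ew}\subseteq\mathbf{C}_\cnx^{wk}\subseteq\Lg\subseteq\IPL$ places $\Lg$ in the interval already treated, the inclusion $\mathbf{C}_\cnx^{wk}\subseteq\IPL$ being precisely the fact that $\IPL$ validates the weak theses because $(\mathsf{HA},\cnx)\models\eqref{AT}$. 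Thus $(\Lg,\cnx)$ is weakly connexive, and by the first part it is connexive. The sole point needing care is the bookkeeping of whether ``weakly connexive'' is read to include \eqref{NS}: because non-symmetry holds unconditionally on the interval $[\mathbf{FL}_\mathbf{ew},\IPL]$ through the Heyting-algebra witness, this choice is immaterial, and the one substantive ingredient remains \Cref{thm:weaklyconnexiveinflew}.
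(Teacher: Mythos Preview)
Your proposal is correct and follows essentially the same approach as the paper: the paper's proof is literally the one-line remark ``By well known facts on algebraization, \Cref{thm:weaklyconnexiveinflew} yields the following,'' and what you have written is precisely a careful unpacking of that sentence (passing to $\V(\Lg)$, observing $\mathsf{HA}\subseteq\V(\Lg)\subseteq\FLew$, invoking \Cref{thm:weaklyconnexiveinflew} for the universal part, and handling \eqref{NS} via the Heyting-algebra witness from \Cref{connexcon}). Your explicit treatment of the non-symmetry clause and of the inclusion $\mathbf{C}_\cnx^{wk}\subseteq\IPL$ fills in details the paper leaves to the reader, but there is no methodological difference.
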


\noindent Consequently, $\mathbf{C}_\cnx^{wk}$ is deductively equivalent to $\mathbf{G}_{\mathbf{FL}_\mathbf{ew}}(\CPL)$.

\subsection{On strong connexivity}
Lastly, we put our results in the context of what is often referred to in the literature  as strong connexivity \cite{Kaps2012, WanHom}.  A logic is called {\em strongly connexive} if it satisfies Aristotle's and Boethius' theses w.r.t. a non-symmetric implication and, moreover, satisfies the requirements:
\begin{enumerate}
\item[(K1)] In no model, $A\cnx \neg A$ is satisfiable (for any $A$), and in no model, $\neg A\cnx A$ is satisfiable (for any $A$);
\item[(K2)] In no model, $A\cnx B$ and $A\cnx \neg B$ are simultaneously satisfiable (for any $A$ and $B$).
\end{enumerate}
Accordingly, let us define a logic to be {\em strongly connexive} if it is connexive and satisfies the principles (K1) and (K2). In this way, we also refer to a connective $\cnx$ being {\em strongly connexive} for some logic.

Now, if $\m A$ is an \FLeA-algebra in which $0$ is the largest element, both $\cnxm$ and $\cnxp$ are proto-connexive for $\m A$ (see \Cref{AllCnxInTriv}). Furthermore, if $\m A$ is nontrivial, then (K1) and (K2) are refuted. 
By the remarks above, 
we obtain the following
\begin{proposition}
For the logics $\mathbf{G}_{\mathbf{FL}_\mathbf{e}}(\CPL)$ and $\mathbf{G}_{\mathbf{FL}_\mathbf{ei}}(\CPL)$, 
both $\cnxm$ and $\cnxp$ are connexive but not strongly connexive.
\end{proposition}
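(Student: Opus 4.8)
The plan is to treat the two assertions—connexivity and the failure of strong connexivity—separately, since the first is a direct consequence of the general theory already developed while the second only requires producing one well-chosen counter-model.

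First I would dispatch connexivity. By the Glivenko theory together with \Cref{FLeLogics}, the logics $\mathbf{G}_{\mathbf{FL}_\mathbf{e}}(\CPL)$ and $\mathbf{G}_{\mathbf{FL}_\mathbf{ei}}(\CPL)$ are the $1$-assertional logics of the varieties $\GLBA{\FLe}$ and $\GLBA{\FLei}$. Since $\mathsf{HA}\subseteq\GLBA{\FLew}\subseteq\GLBA{\FLei}\subseteq\GLBA{\FLe}$ by \Cref{GLBAisBigger}, both varieties lie in the interval between $\mathsf{HA}$ and $\GLBA{\FLe}$, so \Cref{connexcon} yields that $\cnxm$ and $\cnxp$ are connexive in each of them. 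Transporting this back across the algebraization dictionary (\Cref{FLeLogics} and \Cref{def:protoconnexive}) gives that $(\mathbf{G}_{\mathbf{FL}_\mathbf{e}}(\CPL),\cnx)$ and $(\mathbf{G}_{\mathbf{FL}_\mathbf{ei}}(\CPL),\cnx)$ are connexive for $\cnx\in\{\cnxm,\cnxp\}$.

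Next I would refute strong connexivity by exhibiting, inside both varieties simultaneously, a nontrivial algebra in which $0$ is the greatest element; by the remark preceding the statement this forces (K1) and (K2) to fail. Concretely, take any nontrivial integral CRL and expand it by the constant $0:=1$, obtaining an integral \FLeA-algebra $\m A$ whose top is $0=1$. One checks that $\m A$ satisfies $\Spc$ vacuously (the conclusion $\leq 0$ reads $\leq\top$), so $\m A\in\GLBA{\FLei}\subseteq\GLBA{\FLe}$ by \Cref{GLBAint}. In $\m A$ we have $\neg x=\top$ for every $x$, whence $x\cnx\neg x=x\cnx 1=\dm x=\top\geq 1$ for both $\cnx\in\{\cnxm,\cnxp\}$, using \Cref{TauIsDm}(1) and $\dm x=\top$. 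Thus $A\cnx\neg A$ is satisfiable in $\m A$, violating (K1); taking $B:=A$ and recalling $1\leq x\cnx x$ likewise violates (K2). Because this single algebra belongs to both Glivenko varieties, neither logic is strongly connexive.

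The routine part is the algebraization bookkeeping; the only place demanding care—and the main obstacle—is confirming that this degenerate algebra genuinely lies in the Glivenko varieties and that the computation $x\cnx\neg x=\top$ goes through for both connectives. Both points, however, reduce immediately to identities already in hand (\Cref{GLBAint} for membership, and \Cref{TauIsDm} together with $\neg x=\top$ for the computation), so no substantive new work is required.
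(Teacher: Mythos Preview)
Your proposal is correct and follows essentially the same route as the paper: connexivity is read off from \Cref{connexcon}, and strong connexivity is refuted by exhibiting a nontrivial (integral) \FLeA-algebra with $0$ as its greatest element, in which (K1) and (K2) fail. The paper is terser---it cites \Cref{AllCnxInTriv} for proto-connexivity and leaves membership in the Glivenko variety implicit (it follows e.g.\ from \Cref{thm: charconn})---whereas you verify membership directly via \eqref{spc} and \Cref{GLBAint} and compute $x\cnx\neg x$ explicitly using \Cref{TauIsDm}(1); both arguments are equivalent.
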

It is easily shown that if either (K1) or (K2) are refuted in an \FLeA-algebra $\m A$, for either $\cnxm$ or $\cnxp$, then $\m A\models 1\leq 0$.
\begin{proposition}\label[proposition]{rem: stronconnexoneleqzero}
Let $\m A$ be an \FLeA-algebra.
\begin{enumerate}
\item If $(\m A,\cnxm)$ is proto-connexive, then it refutes \emph{(K1)} or \emph{(K2)} iff $\m A\models x\leq 0$;
\item If $(\m A,\cnxp)$ is proto-connexive and it refutes \emph{(K1)} or \emph{(K2)}, then $\m A\models 1\leq 0$. 
\end{enumerate}
\end{proposition}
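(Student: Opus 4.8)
The plan is to translate the satisfiability clauses (K1) and (K2) into the algebraic language and then play the two connexive theses, which hold as genuine \emph{identities} since $\cnx$ is proto-connexive, against the witnesses they provide. Concretely, via \Cref{FLeLogics} a formula is satisfiable in $\m A$ iff some valuation sends it to an element $\geq 1$; so $\m A$ refutes (K1) iff there is $x_0$ with $1\leq x_0\cnx\neg x_0$ or $1\leq \neg x_0\cnx x_0$, and it refutes (K2) iff there are $x_0,y_0$ with $1\leq x_0\cnx y_0$ and $1\leq x_0\cnx\neg y_0$. The starting observation, valid for both $\cnxm$ and $\cnxp$, is that \eqref{AT} and \eqref{AT'} rewrite by residuation as the identities $x\cnx\neg x\leq 0$ and $\neg x\cnx x\leq 0$. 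This already disposes of the (K1) case for both connectives: given a witness, say $1\leq x_0\cnx\neg x_0$, the identity $x_0\cnx\neg x_0\leq 0$ yields $1\leq 0$ at once (the other witness is handled by \eqref{AT'}), settling the (K1) halves of both (1) and (2).

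For (1) I would next treat (K2) for $\cnxm$ and then promote $1\leq 0$ to $x\leq 0$. Proto-connexivity of $\cnxm$ supplies the weak Boethius thesis by \Cref{wkBTlem}(2): $1\leq x\cnxm y$ implies $x\cnxm\neg y\leq 0$. Feeding a (K2)-witness $1\leq x_0\cnxm y_0$ into this gives $x_0\cnxm\neg y_0\leq 0$, which together with the companion witness $1\leq x_0\cnxm\neg y_0$ forces $1\leq 0$. To strengthen this, note that $(\m A,\cnxm)$ proto-connexive entails $\m A\in\GLBA{\FLe}$ by \Cref{thm: charconn}, so by \Cref{GLBAbool} the element $\dm 1=\neg 0$ is the greatest element of $\m A$. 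Since $1\leq 0$ and $\neg$ is order-reversing with $\neg 1=0$, we get $\neg 0\leq \neg 1=0$, i.e.\ the top element lies below $0$; hence $0$ is the top, that is $\m A\models x\leq 0$. Conversely, if $0$ is the greatest element then $\neg x=0$ for all $x$, and a direct computation as in \Cref{AllCnxInTriv} gives $x\cnxm\neg x=0\geq 1$, so (K1) is refuted, completing the biconditional of (1).

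The genuine obstacle is the (K2) case of (2): that a proto-connexive $\cnxp$ refuting (K2) still forces $1\leq 0$. Here the $\cnxm$-argument breaks down, since neither \Cref{charlanfcnxone} nor the weak Boethius thesis is available for $\cnxp$ -- \Cref{wkBTlem}(3) grants the latter only when $\dm\m A$ is integral, which cannot be assumed (in $\Z(0)$, $\cnxp$ is proto-connexive yet $\dm\m A$ is not integral). The route I would pursue is to feed the witnesses into \eqref{BT}: with $a:=x_0\cnxp y_0\geq 1$ and $c:=x_0\cnxp\neg y_0\geq 1$ one gets $1\leq a\cnxp\neg c=(a\to\neg c)\cdot(\neg c\to\dm a)$, where by \Cref{RLfacts} the first factor is $a\to\neg c=\neg(ac)\leq 0$ (since $ac\geq 1$).

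The difficulty, which I expect to be the crux, is that -- in contrast to the meet-based $\cnxm$ -- one cannot read $1\leq 0$ off a \emph{product} having a factor $\leq 0$: bounding by $0\cdot(\neg c\to\dm a)$ is too lossy, as $\Z(n)$ already exhibits $1\leq 0\cdot t$ without $1\leq 0$. Closing the argument should instead exploit the precise shape of the second factor; one promising manipulation is to contrapose $1\leq\neg(ac)\cdot(\neg c\to\dm a)$ into $(\neg c\to\dm a)\to\dm(ac)\leq 0$ and then use $ac\geq 1$, $\dm a\geq 1$, and $\neg c\leq 0$ together with the fact that $a,c$ are $\cnxp$-images (so that \Cref{CnxOrdProp} governs their order behaviour). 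Making this squeeze actually deliver $1\leq 0$ is the single step that I anticipate will require the most care, and is the reason the conclusion for $\cnxp$ is only $1\leq 0$ rather than the stronger $x\leq 0$ obtained for $\cnxm$.
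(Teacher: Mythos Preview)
Your treatment of item~(1) and of the (K1)-case in item~(2) is correct. It differs from the paper's route---the paper reads off $\neg x=\dm x$ via \Cref{charlanfcnxone}(1) and argues that $\dm\m A$ is trivial, whereas you first deduce $1\leq 0$ from \eqref{AT}/\eqref{BTw} and then promote to $x\leq 0$ using that $\dm 1=\neg 0$ is the top in $\GLBA{\FLe}$---but both arguments are sound and of comparable length.

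The genuine gap is exactly where you locate it: the (K2)-case of item~(2). Your proposed line, applying \eqref{BT} to $a\cnxp\neg c$ and trying to extract $1\leq 0$ from $1\leq \neg(ac)\cdot(\neg c\to\dm a)$, does not close; as you yourself note, a product with one factor $\leq 0$ gives no control in the absence of integrality, and the ``contraposition'' you suggest, $(\neg c\to\dm a)\to\dm(ac)\leq 0$, is not a consequence of $1\leq \neg(ac)\cdot(\neg c\to\dm a)$ in $\FLe$. The missing idea is much more direct and avoids \eqref{BT} entirely: multiply the two \emph{raw} witnesses and regroup the four residuals. Using the $\FLe$-identity $u\to\neg v \eq v\to\neg u$ one has $y\to\dm x=\neg x\to\neg y$ and $\neg y\to\dm x=\neg x\to\dm y$, so
\[
(x\cnxp y)(x\cnxp\neg y)=\big[(x\to y)(x\to\neg y)\big]\cdot\big[(\neg x\to\neg y)(\neg x\to\dm y)\big].
\]
Each bracket is bounded by a single residual: from $(x\to y)(x\to\neg y)\cdot x^2\leq y\cdot\neg y\leq 0$ one gets $(x\to y)(x\to\neg y)\leq x\to\neg x$, and symmetrically $(\neg x\to\neg y)(\neg x\to\dm y)\leq \neg x\to\dm x$. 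Hence $1\leq (x\cnxp y)(x\cnxp\neg y)\leq x\cnxp\neg x\leq 0$ by \eqref{AT}. This is the step that replaces your unfinished ``squeeze'', and it explains why the conclusion for $\cnxp$ stops at $1\leq 0$: no appeal to \Cref{thm: charconn} or $\GLBA{\FLe}$ is available to promote further.
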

\begin{proof}
(1). Concerning the right-to-left direction, note that $\m A\models x\leq 0$ implies that, for any $y\in A$, one has $1\leq y\under\neg y$ and $1\leq\neg y\under\dm y$. So we conclude that $1\leq y\cnxm\neg y$ and so $\cnxm$ is not strongly connexive over $A$. Conversely, let us distinguish the following cases:

\begin{enumerate}[(i)]
\item There exists $x\in A$ such that $1\leq x\cnxm\neg x$. This means by \Cref{charlanfcnxone}(1), $\neg x=\neg\neg x$. Since $\m A\in\GLBA{\FLe}$, we have that $\dm\m A$ is trivial and so we conclude $x\leq 0$. 
\item If there exist $x,y\in A$ such that $1\leq x\cnxm y$ and $1\leq x\cnxm\neg y$, then one has, again by \Cref{charlanfcnxone}(1), $\neg x =\neg y$ and $\neg x =\neg\neg y$, i.e. $\neg y=\neg\neg y$. Reasoning as in (1), the desired conclusion follows. 
\end{enumerate}
As regards (2), let us distinguish the following cases
\begin{enumerate}[(i)]
\item There exists $x\in A$ such that $1\leq x\cnxp\neg x$. Then, of course, one has $1\leq x\cnx\neg x\leq 0$;
\item If there exist $x,y\in A$ such that $1\leq x\cnxp y$ and $1\leq x\cnxp\neg y$, then we have:
\begin{align*}
1\leq& (x\cnxp y)(x\cnxp\neg y)\\
=&(x\under y)(y\under\dm x)(x\under\neg y)(\neg y\under\dm x)\\
\leq& (x\under\neg x)(\neg x\under\dm x) = (x\cnxp\neg x).
\end{align*}
By the previous case, our claim follows. \qedhere
\end{enumerate}
\end{proof}
Even more, the above results show that, for $\cnxm$, the failure of strong connexivity is witnessed \emph{exactly} by those \FLeA-algebras having the equational Glivenko property with respect to the trivial variety (cf. \Cref{sec:cnxResid}).  As there is no non-trivial algebra in $\FLew$ in which $0=1$, \Cref{rem: stronconnexoneleqzero} yields

\begin{theorem}\label[theorem]{StrCnxFLew}
Let $\cnx\in\{\cnxp,\cnxm\}$. Then, for any substructural logic $\Lg$ in the interval between $\mathbf{FL}_\mathbf{ew}$ and $\IPL$, $(\Lg,\cnx)$ is strongly connexive if and only if $(\Lg,\cnx)$ is connexive.
Moreover, every logic between $\mathbf{G}_{\mathbf{FL}_\mathbf{ew}}(\CPL)$ and $\IPL$ 
is strongly connexive for $\cnx$.
\end{theorem}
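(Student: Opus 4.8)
The plan is to route everything through \Cref{rem: stronconnexoneleqzero} together with the fact that $\FLew$ degenerates to the trivial algebra exactly when $1\leq 0$. The forward implication of the stated biconditional is immediate: by definition strong connexivity \emph{is} connexivity supplemented by (K1) and (K2), so it trivially entails connexivity. All the content lies in the converse, where the goal is to show that for logics in the interval $[\mathbf{FL}_\mathbf{ew},\IPL]$ the two Kapsner conditions come for free once $\cnx$ is proto-connexive.

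So I would assume $(\Lg,\cnx)$ is connexive with $\mathbf{FL}_\mathbf{ew}\subseteq\Lg\subseteq\IPL$, whence $\V(\Lg)$ is a subvariety of $\FLew$ on which $\cnx$ is proto-connexive, and argue that (K1) and (K2) hold across the non-trivial members of $\V(\Lg)$. Arguing by contradiction, suppose some $\m A\in\V(\Lg)$ refutes (K1) or (K2). As $(\m A,\cnx)$ is proto-connexive, \Cref{rem: stronconnexoneleqzero} applies: for $\cnx=\cnxm$ the failure forces $\m A\models x\leq 0$, while for $\cnx=\cnxp$ it forces $\m A\models 1\leq 0$; in either case $\m A\models 1\leq 0$. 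But $\m A\in\FLew$ is integral and $0$-bounded, so $1\leq 0$ collapses $1$ and $0$ and $\m A$ is trivial. Hence no non-trivial model refutes (K1) or (K2), and $(\Lg,\cnx)$ is strongly connexive, completing the converse.

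For the ``moreover'' clause I would simply combine the biconditional just proved with the characterization already in hand. By \Cref{LogCharFLew} (equivalently, by \Cref{thm: FLewPC} and \Cref{connexcon}) the connexive logics sitting between $\mathbf{FL}_\mathbf{ew}$ and $\IPL$ are exactly those extending $\mathbf{G}_{\mathbf{FL}_\mathbf{ew}}(\CPL)$; note that $\mathsf{HA}\subseteq\GLBA{\FLew}$ dualizes to $\mathbf{G}_{\mathbf{FL}_\mathbf{ew}}(\CPL)\subseteq\IPL$, so the interval $[\mathbf{G}_{\mathbf{FL}_\mathbf{ew}}(\CPL),\IPL]$ is well-posed. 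Every logic therein is connexive for $\cnx$, and applying the first part upgrades each to strongly connexive.

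The only genuinely delicate point, and the step I would phrase most carefully, is the treatment of the trivial (one-element) algebra and the precise reading of (K1)/(K2) as statements about the \emph{logic}: these conditions are refuted by the trivial algebra, which formally lives in $\V(\Lg)$. The argument must therefore measure strong connexivity of $\Lg$ against its non-trivial models, consistent with \Cref{rem: stronconnexoneleqzero}, whose conclusion $1\leq 0$ is precisely the equation carving out triviality inside $\FLew$. Once this convention is fixed, no computation remains: the absence of any non-trivial algebra in $\FLew$ satisfying $1\leq 0$ is exactly what makes the putative obstruction to (K1) and (K2) vanish.
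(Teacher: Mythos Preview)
Your proof is correct and follows essentially the same route as the paper: the paper's argument is the one-line observation that no non-trivial $\FLew$-algebra satisfies $1\leq 0$, combined with \Cref{rem: stronconnexoneleqzero}. Your version spells this out more carefully (including the handling of the trivial algebra under (K1)/(K2)), but the core mechanism is identical.
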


In light of \Cref{WkCnxFLew} and the above \Cref{StrCnxFLew}, we are confronted with the surprising fact: 
\begin{theorem}\label[theorem]{StrCnxWk}
For any logic between $\mathbf{FL}_\mathbf{ew}$ and $\IPL$, 
the conditions of strongly connexive, connexive, and weakly connexive are equivalently satisfied for both $\cnxp$ and $\cnxm$.
\end{theorem}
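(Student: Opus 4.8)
The plan is to obtain this theorem as a direct corollary of the two equivalences already secured in the weakening setting, namely \Cref{WkCnxFLew} and \Cref{StrCnxFLew}, by chaining them through the common middle notion of connexivity. No new algebraic computation should be needed: the statement is a synthesis of what has just been proved.

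Concretely, fix $\cnx\in\{\cnxp,\cnxm\}$ and let $\Lg$ be any substructural logic with $\mathbf{FL}_\mathbf{ew}\subseteq\Lg\subseteq\IPL$. First I would invoke \Cref{StrCnxFLew}, which asserts that $(\Lg,\cnx)$ is strongly connexive if and only if it is connexive. Next I would invoke \Cref{WkCnxFLew}, which asserts that $(\Lg,\cnx)$ is weakly connexive if and only if it is connexive. Since both biconditionals share the same right-hand side---that $(\Lg,\cnx)$ be connexive---transitivity of logical equivalence immediately yields that the three conditions, strongly connexive, connexive, and weakly connexive, are pairwise equivalent for $(\Lg,\cnx)$. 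As this argument is uniform in the choice of $\cnx\in\{\cnxp,\cnxm\}$, it covers both connectives simultaneously, which is exactly what the statement claims.

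I do not expect any genuine obstacle here: all the substantive content has been discharged in establishing the two cited results, each of which in turn rests on the characterization of proto-connexivity via pseudo-complementation in $\FLew$ (\Cref{thm: FLewPC}) and, for the strong case, on \Cref{rem: stronconnexoneleqzero} together with the fact that no nontrivial $\FLew$-algebra satisfies $1\leq 0$. The only point that warrants a moment's care is bookkeeping: one must confirm that \Cref{WkCnxFLew} and \Cref{StrCnxFLew} are stated for exactly the same interval of logics (between $\mathbf{FL}_\mathbf{ew}$ and $\IPL$) and for the same pair of connectives $\cnxp,\cnxm$, so that the two equivalences may be composed without any gap. Since they are, the theorem follows at once.
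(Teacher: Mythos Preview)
Your proposal is correct and matches the paper's own argument exactly: the paper presents \Cref{StrCnxWk} with the single justifying line ``In light of \Cref{WkCnxFLew} and the above \Cref{StrCnxFLew}'', i.e.\ it simply chains the two previously established biconditionals through the common middle notion of connexivity, precisely as you do. One small bookkeeping remark: in the paper the label \texttt{WkCnxFLew} is (apparently by a labeling slip) attached to the later \eqref{efq}-theorem rather than to the weak-connexivity equivalence; the result you actually need---and the one the paper clearly intends---is the corollary of \Cref{thm:weaklyconnexiveinflew}, so be sure your cross-reference points there.
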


As already argued in \cite[p. 4]{Kaps2012}, a reasonable task to pursue is expressing strong connexivity in the object language itself. To this aim, Kapsner introduces the notion of \emph{superconnexivity} as the formal alter-ego of strong connexivity, since it reflects the idea that violations of Aristotle's and Boethius' theses should be regarded as genuine contradictions whose satisfaction results into triviality. The latter fact can be codified by the following axiom schemas
\begin{itemize}
\item[SA:] $(\varphi\to\neg\varphi)\under\psi$;
\item[SB:] $(\varphi\to\psi)\to((\varphi\to\neg\psi)\to\chi)$. 
\end{itemize}
Unfortunately, SA and SB lead to triviality under a very narrow set of assumptions (among the others, being closed under substitution). Therefore, one might wonder whether, at least in some cases, strong connexivity is conveyed by less demanding inference rules. And the answer is positive. In fact, in what follows we show that, at least in our framework, strong connexivity can be expressed by means of a simple (indeed very classical!) schema: \emph{ex falso quodlibet}.\\
Let us denote by $\mathsf{PC}^{\circ}$ and $\mathsf{PC}^{\land} [=\GLBA{\FLe}]$ the varieties of \FLeA-algebras in which $\cnxp$ and $\cnxm$ are proto-connexive, respectively. In view of Proposition \ref{rem: stronconnexoneleqzero}, the \emph{largest} sub-quasivariety $\mathsf{M}^{\circ}$ ($\mathsf{M}^{\land}$) of $\mathsf{PC}^{\circ}$ ($\mathsf{PC}^{\land}$) in whose $1$-assertional logic $\cnxp$ ($\cnxm$) is strongly connexive is axiomatized by the quasi-identity \[1\leq 0\quad\mathsf{implies}\quad 1\leq x\] (or, equivalently, $1\leq 0\quad\mathsf{implies}\quad x=y$). Therefore, we have that $\Lg(\mathsf{M}^{\circ})$ ($\Lg(\mathsf{M}^{\land})$) can be obtained from $\vdash_{\Lg(\mathsf{PC}^{\circ})}$ ($\vdash_{\Lg(\mathsf{PC}^{\land})}$) by adding the \emph{explosion} schema 
\begin{equation}
0\vdash\varphi.\tag{EFQ}\label{efq} 
\end{equation}
Of course, $\Lg(\mathsf{M}^{\circ})$ ($\Lg(\mathsf{M}^{\land})$) is the \emph{least} $1$-assertional sub-logic of ${\Lg(\mathsf{PC}^{\circ})}$ (${\Lg(\mathsf{PC}^{\land})}$) for which $\cnxp$ ($\cnxm$) is strongly connexive. The above considerations boil down to the following
\begin{theorem}\label[theorem]{WkCnxFLew}
Let $\cnx\in\{\cnxp,\cnxm\}$ and $\Lg$ be a substructural logic such $(\Lg,\cnx)$ is connexive. Then $(\Lg,\cnx)$ is strongly connexive iff \eqref{efq} holds in $\Lg$. 
\end{theorem}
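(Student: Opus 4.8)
The plan is to pass to the algebraic side via the Algebraization Theorem (\Cref{FLeLogics}) and reduce everything to \Cref{rem: stronconnexoneleqzero}. Recall that $(\Lg,\cnx)$ being connexive means that every $\m A\in\V(\Lg)$ is proto-connexive for $\cnx$ (so $\V(\Lg)\subseteq\mathsf{PC}^{\cnx}$, where $\mathsf{PC}^{\land}=\GLBA{\FLe}$ and $\mathsf{PC}^{\circ}$ is the variety where $\cnxp$ is proto-connexive), and that $(\Lg,\cnx)$ is strongly connexive exactly when, in addition, no nontrivial $\m A\in\V(\Lg)$ refutes (K1) or (K2). On the other side, \eqref{efq}, i.e. $0\vdash_\Lg\varphi$, translates by \Cref{FLeLogics} into the quasi-identity $1\leq 0\Rightarrow 1\leq x$ holding throughout $\V(\Lg)$; equivalently, every member of $\V(\Lg)$ satisfying $1\leq 0$ is trivial (using that $1\leq x$ for all $x$ forces $x=y$ by residuation). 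Thus the theorem becomes: \emph{no nontrivial member of $\V(\Lg)$ refutes (K1)/(K2) iff every member with $1\leq 0$ is trivial.}

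For the direction ``\eqref{efq} implies strong connexivity'' I would argue contrapositively. If some nontrivial $\m A\in\V(\Lg)$ refutes (K1) or (K2), then \Cref{rem: stronconnexoneleqzero} gives $\m A\models 1\leq 0$: for $\cnxm$ this is part (1), which even yields $x\leq 0$, and for $\cnxp$ it is part (2). Since \eqref{efq} forces any such $\m A$ to be trivial, this contradicts nontriviality, and strong connexivity follows. For the converse direction I take $\m A\in\V(\Lg)$ with $1\leq 0$ and try to show it is trivial. When $\cnx=\cnxm$ this is clean: connexivity gives $\m A\in\GLBA{\FLe}$ by \Cref{thm: charconn}, where $\dm 1$ is the greatest element, so $1\leq 0$ yields $\dm 1\leq\dm 0=0$ and hence $0$ is the top, i.e. $\m A\models x\leq 0$; then \Cref{rem: stronconnexoneleqzero}(1) shows $\m A$ refutes (K1)/(K2), and strong connexivity forces $\m A$ trivial.

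The main obstacle is the converse direction for $\cnxp$, because \Cref{rem: stronconnexoneleqzero}(2) only supplies ``refutes $\Rightarrow 1\leq 0$'', whereas here I need the reverse implication: every nontrivial $\cnxp$-proto-connexive $\m A$ with $1\leq 0$ must refute (K1). My preferred route is to invoke the characterization established immediately before the theorem, namely that $\Lg(\mathsf{M}^{\circ})=\Lg(\mathsf{PC}^{\circ})+\eqref{efq}$ is the \emph{least} extension of $\Lg(\mathsf{PC}^{\circ})$ for which $\cnxp$ is strongly connexive (and likewise $\Lg(\mathsf{M}^{\land})=\Lg(\mathsf{PC}^{\land})+\eqref{efq}$ for $\cnxm$). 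Since strong connexivity is inherited by subvarieties (removing models only preserves ``(K1)/(K2) in all nontrivial members''), $(\Lg,\cnx)$ is strongly connexive iff $\Lg$ extends $\Lg(\mathsf{M}^{\cnx})$; and as $(\Lg,\cnx)$ connexive already means $\Lg$ extends $\Lg(\mathsf{PC}^{\cnx})$, this holds iff \eqref{efq} is a rule of $\Lg$. This closes both cases uniformly.

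If instead one wants a self-contained computation for the $\cnxp$ converse, the key identity is $1\cnxp\neg 1=0\cdot\neg(0^2)$, obtained from $1\to\neg 1=\neg 1=0$ and $0\to\dm 1=0\to\neg 0=\neg(0^2)$ via \Cref{RLfacts}. From $1\leq 0$ one has $1\leq\dm 1\leq\dm 0=0$. In the subcase $\dm 1=0$ this collapses to $1\cnxp\neg 1=0^2\geq 1$ (monotonicity of $\cdot$ and $0\geq 1$), refuting (K1) with witness $x=1$; the residual subcase $\dm 1<0$ needs a finer choice of witness, and it is exactly this point that I expect to be the genuinely delicate part of a direct proof — which is why leaning on the already-proved $\mathsf{M}^{\cnx}$ identification is the safer strategy.
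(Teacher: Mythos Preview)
Your approach is essentially the paper's own: the theorem carries no separate proof environment and is presented as a restatement of the preceding paragraph, which identifies $\mathsf{M}^{\land}$ and $\mathsf{M}^{\circ}$ with the sub-quasivarieties of $\mathsf{PC}^{\land}$ and $\mathsf{PC}^{\circ}$ cut out by the quasi-identity $1\leq 0\Rightarrow 1\leq x$ (equivalently, by \eqref{efq} on the logical side). Your ``preferred route''---reducing the statement to that identification together with the observation that strong connexivity passes to sub(quasi)varieties---is exactly how the paper packages the result, and your explicit treatment of the $\cnxm$-case via \Cref{thm: charconn} and \Cref{rem: stronconnexoneleqzero}(1) makes precise what the paper leaves implicit.

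Your caution about the $\cnxp$-converse is well placed and worth flagging: \Cref{rem: stronconnexoneleqzero}(2) only gives ``refutes (K1)/(K2) $\Rightarrow 1\leq 0$'', which suffices for the direction ``\eqref{efq} $\Rightarrow$ strongly connexive'' but not obviously for its converse. The paper, like you, does not supply a self-contained computation here and instead leans on the asserted identification of $\mathsf{M}^{\circ}$. So your ``preferred route'' is not an independent argument but the same move the paper makes; your partial direct computation (the subcase $\dm 1=0$) goes slightly beyond what the paper writes, and you are right that the residual subcase would need a different witness.
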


\section{Conclusion and future research}\label{sec: conclusion}
This work have been devoted to a preliminary investigation of term-definable connexive implications in Substructural Logics and their semantic features. 

We have shown that the Glivenko variety of $\FLe$ ($\FLei$, and $\FLew$) relative to Boolean algebras, provides a suitable semantical environment in which the connectives $\cnxm$ and $\cnxp$ serve as {\em bonafide} connexive implications, and {\em vice versa}. Consequently, any axiomatic extension of $\mathbf{G}_{\mathbf{FL}_\mathbf{e}}(\CPL)$ 
provides a full-fledged connexive logic once the $\{\land,\lor,\cdot,\cnxm,0,1\}$-fragment (and, assuming the axiom $\vdash\varphi\to\neg\neg 1$, the same holds replacing $\cnxm$ by $\cnxp$) is considered.

We have also argued that these logics provide a suitable framework (resp., semantics) for the \emph{logic of plausible inference} \`{a} la G.~Polya. As it has been remarked in Section \ref{sec:charwhencnxmdiscon}, focusing on $\cnxm$ does not result in a valuable loss of generality, since in most cases the connexivity of $\cnxm$ is equivalent to the connexivity of $\cnxp$ or, at least in the integral case, to the connexivity of \emph{any} binary operation in the interval $[\cnxp,\cnxm]$. See also Section \ref{sec: strength of cnxm}.

Finally, we have shown that, in our framework, strong connexivity is codified by \eqref{efq}. Therefore, in some cases, it can be expressed in the object language of a connexive logic by means of an anything but contra-classical inference rule.
We see this contribution as a stepping-stone for further investigations of both the connective $\cnxm$, in particular, and other (term-defined) implications in residuated lattices and substructural logics in general. For instance, it seems reasonable that the characterization results presented in the work may generalize to the non-commutative case; while other lines to be considered may be the following:
\begin{itemize}
\item Obviously, for any sub-variety $\mathsf{V}$ of the variety $\mathsf{In}\FLe$ of \emph{involutive} (i.e. satisfying $\neg\neg x \approx x$) \FLeA-algebras, neither $(\V,\cnxm)$ nor $(\V,\cnxp)$ are connexive. 
Therefore, an interesting task would be investigating term-definable connexive implications in involutive pointed commutative residuated lattices.
\item Due to results obtained so far, $\cnxm$ and $\cnxp$ fall short of behaving, in general, as suitable connexive implications once an arbitrary \FLeA-algebra is considered. However, extending the line of research \cite{GherOrla, Pizzi91}, it is reasonable to wonder if there are \emph{modal expansions} of \FLeA-algebras for which
full-fledged connexive implications can be term-defined.

\item Along the stream of research inaugurated by \cite{Wansing2007}, expansions of residuated lattices with connexive implications and/or unary operations $\sim$ which satisfy connexive theses together with $/$ and $\backslash$ would be worth of investigation.

\item As it has been pointed out in Section \ref{sec: weak connex}, in some cases (e.g. for extensions of $\mathbf{FL}_\mathbf{ew}$), weak connexivity is equivalent to (strong!) proto-connexivity. However, as witnessed by Example \ref{example:weak connexivity does not imply conn}, the former concept is properly weaker in the general framework, even for the well performing connective $\cnxm$.  Therefore, it naturally rises the question if a characterization of algebras admitting weakly connexive implications like the ones investigated in this paper can be provided.
\end{itemize}
And lastly there is, of course, the naturally risen question of if there is a cut-free sequent calculus for at least the $\{\land,\lor,\cdot,\cnxm,0,1\}$-fragment of $\mathbf{G}_{\mathbf{FL}_\mathbf{e}}(\CPL)$ [$\mathbf{G}_{\mathbf{FL}_\mathbf{ei}}(\CPL)$ or $\mathbf{G}_{\mathbf{FL}_\mathbf{ew}}(\CPL)$]. 
The problem is obviously strictly connected to finding a cut-free sequent calculus whose equivalent algebraic semantics are $\GLBA{\FLe}$, $\GLBA{\FLei}$ and $\GLBA{\FLew}$ which, to the best of our knowledge, is still missing.

\subsection*{Acknowledgements}
The authors gratefully acknowledge Antonio Ledda, Hitoshi Omori, Francesco Paoli, Adam P\v{r}enosil, and Sara Ugolini for the insightful discussions on the subjects of the present work. 

\end{document}